\documentclass[aos]{imsart}

\RequirePackage{amsthm,amsmath,amsfonts,amssymb}
\RequirePackage[numbers, sort]{natbib}
\RequirePackage[colorlinks,citecolor=blue,urlcolor=blue]{hyperref}%% uncomment this for coloring bibliography citations and linked URLs
\RequirePackage{graphicx}%% uncomment this for including figures
\RequirePackage{array, tikz, subcaption}
\RequirePackage{enumitem}

\startlocaldefs
\numberwithin{equation}{section}
\theoremstyle{plain}
\newtheorem{theorem}{Theorem}[section]
\newtheorem{proposition}[theorem]{Proposition}
\newtheorem{lemma}[theorem]{Lemma}

\theoremstyle{definition}
\newtheorem{remark}{Remark}[section]

\newtheorem{assumption}[remark]{Assumption}
\newcommand{\Prob}{\mathbb{P}}

\newcommand\numberthis{\addtocounter{equation}{1}\tag{\theequation}}

\newcommand{\bs}[1]{\boldsymbol{#1}}

\makeatletter
\def\lstAZ{A, B, C, D, E, F, G, H, I, J, K, L, M, N, O, P, Q, R, S, T, U, V, W, X, Y, Z}
\def\lstaz{a, b, c, d, e, f, g, h, i, j, k, l, m, n, o, p, q, r, s, t, u, v, w, x, y, z}
\def\lstAZBB{B, C, D, E, F, G, H, I, J, K, L, M, N, O, Q, R, T, U, V, W, X, Y, Z}
\newcommand{\MkScr}[1]{\expandafter\def\csname s#1\endcsname{\mathscr{#1}}}
\newcommand{\MkUp}[1]{\expandafter\def\csname u#1\endcsname{\mathrm{#1}}}
\newcommand{\MkFrak}[1]{\expandafter\def\csname f#1\endcsname{\mathfrak{#1}}}
\newcommand{\MkCal}[1]{\expandafter\def\csname c#1\endcsname{\mathcal{#1}}}
\newcommand{\MkBB}[1]{\expandafter\def\csname #1#1\endcsname{\mathbb{#1}}}

\@for\i:=\lstAZ\do{%
	\expandafter\MkScr \i  %  
	\expandafter\MkFrak \i  %
	\expandafter\MkUp \i %
	\expandafter\MkCal \i  %
		  }    
\@for\i:=\lstaz\do{%
	\expandafter\MkUp \i   }    
	
\@for\i:=\lstAZBB\do{%
	\expandafter\MkBB \i     }
\makeatother
\newcommand{\PP}{\mathbb{P}}

\endlocaldefs

\begin{document}

\begin{frontmatter}
%%%%%%%%%%%%%%%%%%%%%%%%%%%%%%%%%%%%%%%%%%%%%%
%%                                          %%
%% Enter the title of your article here     %%
%%                                          %%
%%%%%%%%%%%%%%%%%%%%%%%%%%%%%%%%%%%%%%%%%%%%%%
\title{On micromodes in Bayesian posterior distributions and their implications for MCMC}
%\title{A sample article title with some additional note\thanksref{T1}}
\runtitle{Micromodes and their implications for MCMC}
%\thankstext{T1}{A sample of additional note to the title.}

\begin{aug}
%%%%%%%%%%%%%%%%%%%%%%%%%%%%%%%%%%%%%%%%%%%%%%%
%% Only one address is permitted per author. %%
%% Only division, organization and e-mail is %%
%% included in the address.                  %%
%% Additional information such as            %%
%% identifying the corresponding author must %%
%% be included in in the Acknowledgments     %%
%% section if necessary.                     %%
%% ORCID can be inserted by command:         %%
%% \orcid{0000-0000-0000-0000}               %%
%%%%%%%%%%%%%%%%%%%%%%%%%%%%%%%%%%%%%%%%%%%%%%%
\author[A]{\fnms{Sanket}~\snm{Agrawal}\ead[label=e1]{sanket.agrawal@math.ku.dk}},
\author[B]{\fnms{Sebastiano}~\snm{Grazzi}\ead[label=e2]{sebastiano.grazzi@unibocconi.it}}
\and
\author[C]{\fnms{Gareth O.}~\snm{Roberts}\ead[label=e3]{gareth.o.roberts@warwick.ac.uk}}
%%%%%%%%%%%%%%%%%%%%%%%%%%%%%%%%%%%%%%%%%%%%%%
%% Addresses                                %%
%%%%%%%%%%%%%%%%%%%%%%%%%%%%%%%%%%%%%%%%%%%%%%
\address[A]{University of Copenhagen\printead[presep={,\ }]{e1}}
\address[B]{Bocconi University\printead[presep={,\ }]{e2}}
\address[C]{University of Warwick\printead[presep={,\ }]{e3}}
\end{aug}

\begin{abstract}
We investigate the existence and severity of local modes in posterior distributions from Bayesian analyses. These are known to occur in posterior tails resulting from heavy-tailed error models such as those used in robust regression. To understand this phenomenon clearly, we consider in detail location models with Student-$t$ errors in dimension $d$ with sample size $n$. For sufficiently heavy-tailed data-generating distributions, extreme observations become increasingly isolated as $n \to \infty$. We show that each such observation induces a unique local posterior mode with probability tending to $1$. We refer to such a local mode as a \emph{micromode}. These micromodes are typically small in height but their domains of attraction are large and grow polynomially with $n$. We then connect this posterior geometry to computation. We establish an Arrhenius law for the time taken by one-dimensional piecewise deterministic Monte Carlo algorithms to exit these micromodes. Our analysis identifies a phase transition where a misspecified and overly underdispersed model causes exit times to increase sharply, leading to a pronounced deterioration in sampling performance. 
\end{abstract}

\begin{keyword}[class=MSC]
\kwd[Primary ]{62E20}
\kwd{62-08}
\kwd[; secondary ]{62F15}
\kwd{65C05}
\end{keyword}

\begin{keyword}
\kwd{robust Bayesian inference}
\kwd{outliers}
\kwd{extreme order statistics}
\kwd{multimodality}
\kwd{heavy-tailed data}
\kwd{large-sample asymptotics}
\kwd{Markov chain Monte Carlo}
\end{keyword}

\end{frontmatter}
%%%%%%%%%%%%%%%%%%%%%%%%%%%%%%%%%%%%%%%%%%%%%%
%% Please use \tableofcontents for articles %%
%% with 50 pages and more                   %%
%%%%%%%%%%%%%%%%%%%%%%%%%%%%%%%%%%%%%%%%%%%%%%
%\tableofcontents

%%%%%%%%%%%%%%%%%%%%%%%%%%%%%%%%%%%%%%%%%%%%%%
%%%% Main text entry area:

\section{Introduction}
    \label{sec:intro}
    Modern Bayesian methods are increasingly applied to large-sample settings with heavy-tailed models to accommodate atypical observations. Computation is often performed by generic algorithms that interact with the posterior distribution primarily through evaluations of the log-density and its derivatives. In such settings, the global properties of the posterior (e.g., tail behavior and concentration) may coexist with local geometric features induced by individual isolated observations. Understanding how these local features arise, how frequently they occur under heavy-tailed data-generating mechanisms, and how they affect the performance of Monte Carlo algorithms is the main motivation for this work.

    To analytically explore and study these phenomena in a tractable setting, let $y_1, \dots, y_n \in \mathbb{R}^d$ be an i.i.d. sample of size $n$ from some unknown distribution $P$. Consider the simple Bayesian location model for $P$,
    \begin{equation}
    \label{eq: linear model}
          y_j = x + \varepsilon_j, \qquad \varepsilon_1, \dots, \varepsilon_n \overset{\mathrm{i.i.d.}}{\sim} t_\nu
    \end{equation}
    where $x \in \mathbb{R}^d$ is an unknown location parameter, and $t_{\nu}$ is the standard $d$-dimensional Student-$t$ distribution with $\nu>0$ degrees of freedom. Such heavy-tailed models are common in Bayesian regression due to their robustness property against outliers  \citep[see, e.g., ][]{de1961bayesian, o2012bayesian, gramacy_shrinkage_2010, boonstra_multilevel_2021, lange_robust_1989, fernandez_multivariate_1999, fonseca_objective_2008, he_objective_2021, gagnon_theoretical_2023, de2025robust, gagnon2024fundamental}.  
    
    Let $\pi^{(n)}$ denote the Bayesian posterior for parameter $x$ under the model \eqref{eq: linear model} and the assumption of a non-informative prior. In dimension one, for $P = t_1$ and $\nu = 1$, \cite{reeds_asymptotic_1985} examined the distribution of local maxima of $\pi^{(n)}$. They showed that, as $n \to \infty$, every observation greater than $2n - \sqrt{n}$ results in exactly one local maximum within unit distance of it with probability $1$. It turns out that the existence of such local modes can be argued much more generally, for example in dimensions greater than one and whenever $P$ is sufficiently heavy-tailed. To illustrate this idea, take $d = 1$ and examine the first two derivatives of the negative log-posterior
\begin{equation}
    \label{eq:derivatives}
            \frac{-\partial\log \pi^{(n)}(x)}{\nu + 1} = \sum_{j=1}^n \frac{x - y_j}{\nu + (x - y_j)^2}; \quad \frac{-\partial^2\log \pi^{(n)}(x)}{\nu + 1} = \sum_{j=1}^n \frac{(\nu - (x - y_j)^2)}{(\nu + (x - y_j)^2)^2}.
    \end{equation}
    Suppose the observations are configured such that one of them, say $y_n$ without loss of generality, is remote and isolated from the rest of the dataset, i.e., $|y_n - y_j| \approx |y_n| \gg 1$ for all $j \le n-1$. Then, for $x$ close to $y_n$, we may approximate the terms in \eqref{eq:derivatives} by
    \begin{equation}
    \label{eq:approx_intro}
        \frac{-\partial\log \pi^{(n)}(x)}{\nu + 1} \approx \frac{n-1}{x} + \frac{(x - y_n)}{\nu + (x - y_n)^2}; \ \frac{-\partial^2\log \pi^{(n)}(x)}{\nu+1} \approx -\frac{n-1}{x^2} + \frac{(\nu - (x - y_n)^2)}{(\nu + (x - y_n)^2)^2}.
    \end{equation}
    If $y_n \gg n$, then $n/|x| \approx n/|y_n| \ll 1$ and the first term in both expressions above becomes negligible. As a result, the gradient and the curvature of the posterior $\pi^{(n)}$ near $y_n$ are influenced solely by $y_n$. 
    This causes an unusual bump in the posterior density which manifests as a local maximum.
    
    For a sufficiently heavy-tailed true distribution $P$, such configurations as described above are not atypical. They become more common as either the size of the dataset, $n$, or the dimension, $d$, increases. In either case, the outlying observations become more remote and result in a local maximum around them. We call these local maxima \emph{micromodes} as they are local modes of the posterior distribution and, although big within their domain of attraction, they are tiny in absolute height. See the top panels of Figure \ref{fig:micromodes_and_target}, for an illustration in $d=2$.  

    \begin{figure}[t]
        \centering
        \includegraphics[width=0.49\linewidth, height=0.44\linewidth]{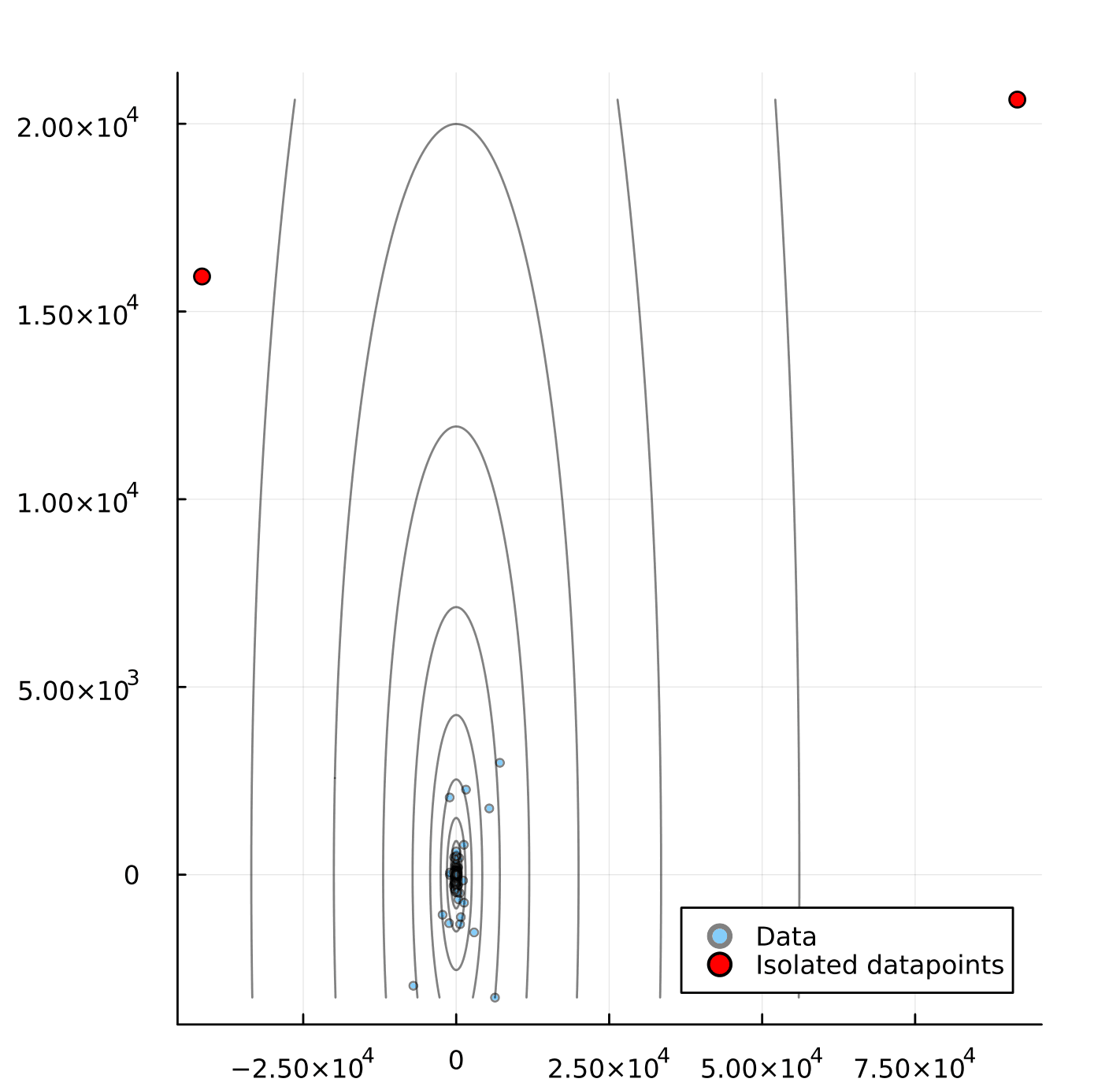}
        \includegraphics[width=0.49\linewidth, height=0.44\linewidth]{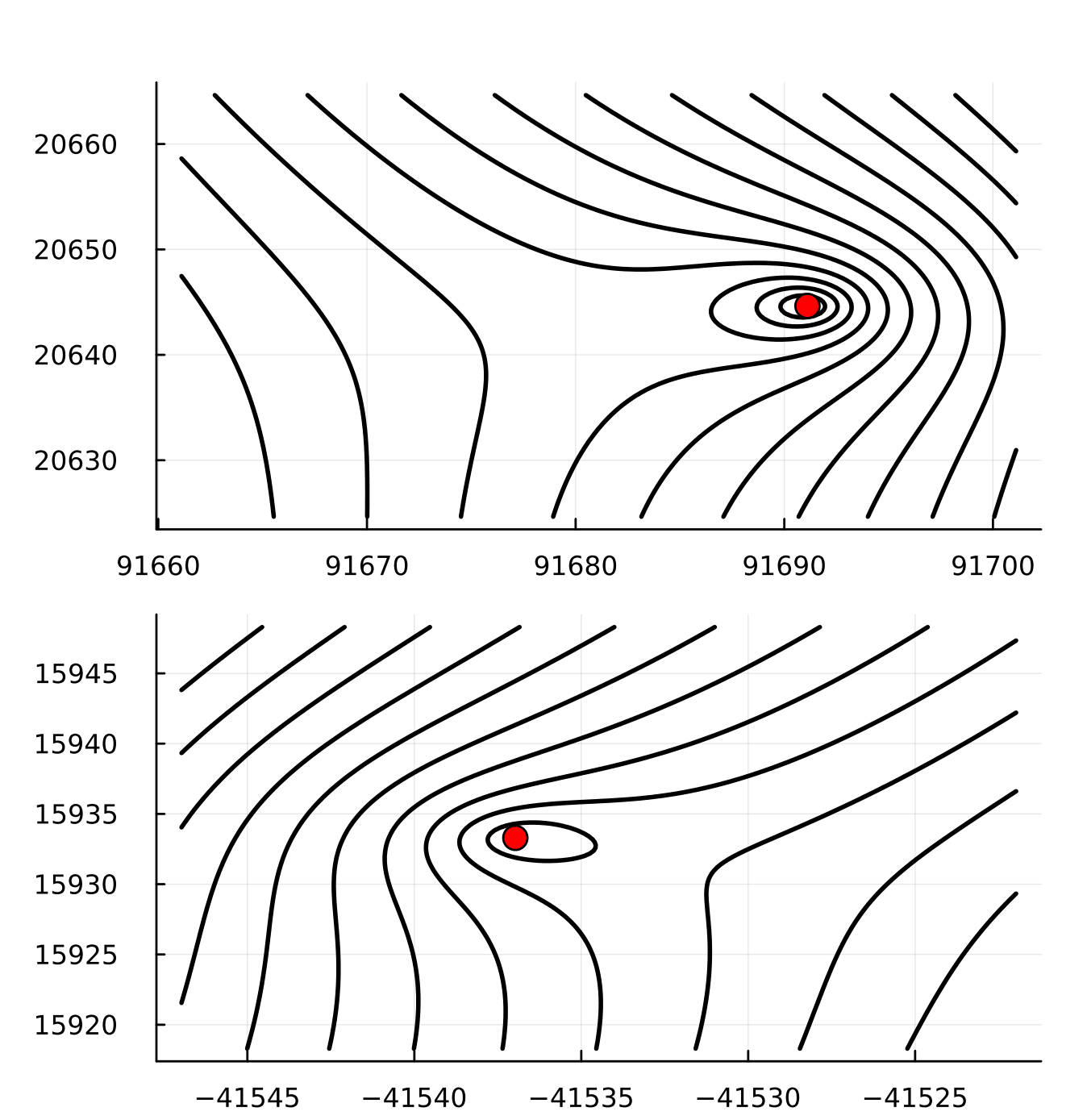}
        \includegraphics[width=0.49\linewidth, height=0.44\linewidth, trim={0 0 10pt 0}, clip]{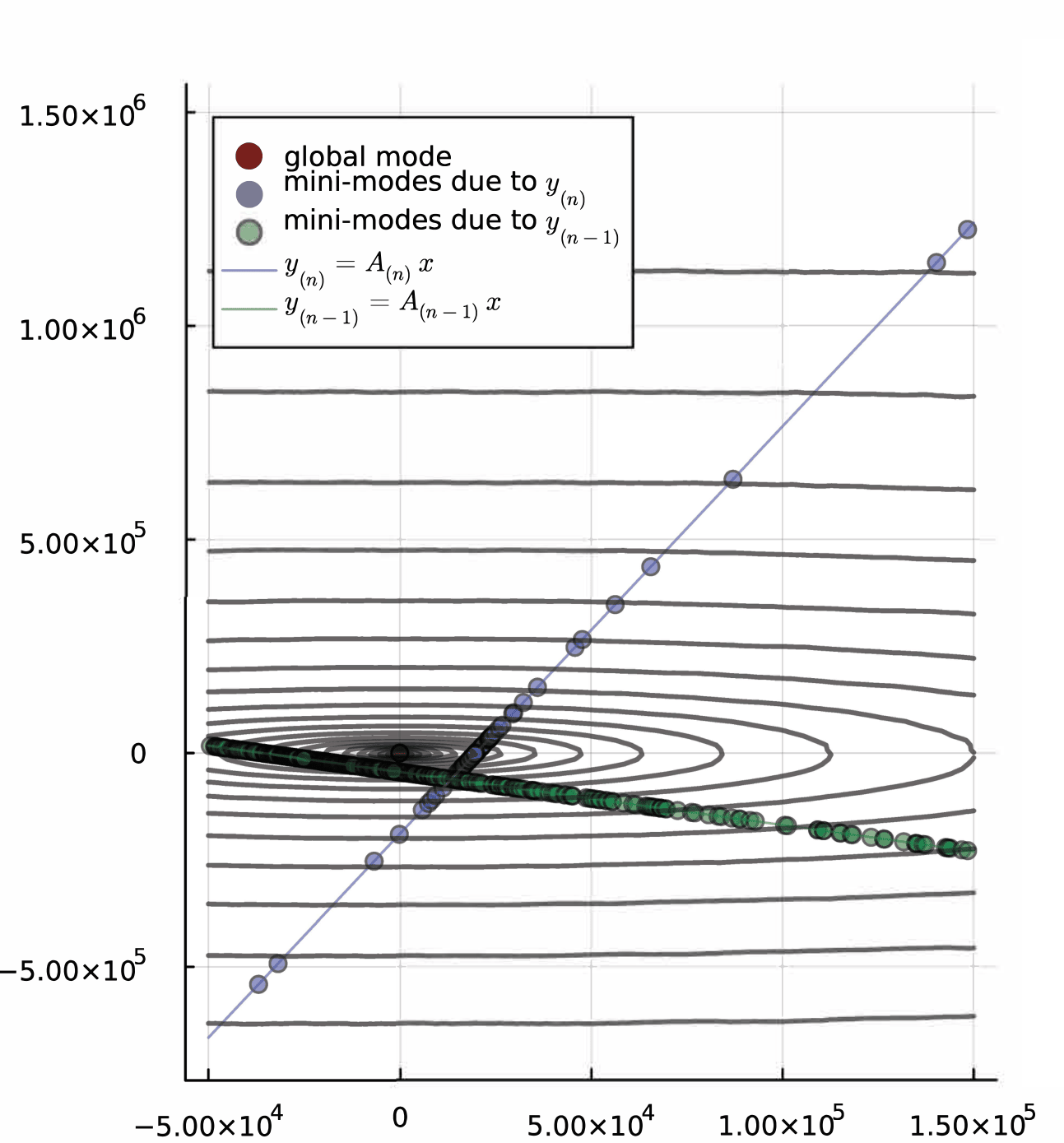}
        \includegraphics[width=0.49\linewidth, height=0.44\linewidth, trim={0 0 10pt 0}, clip]{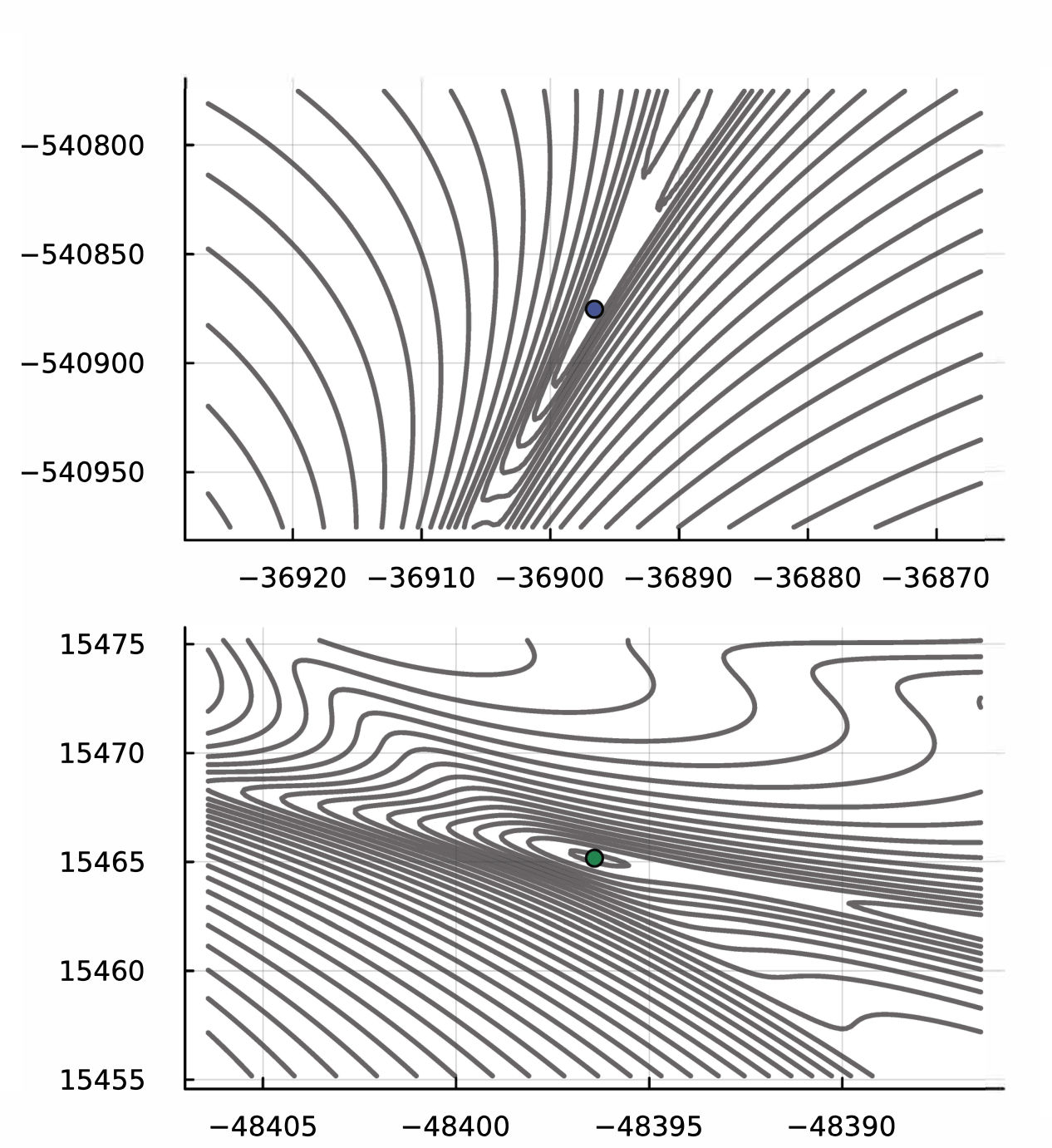}
        \caption{Isodensity contours of 2-dimensional posterior distributions with $n=10^5$ datapoints. Top panels for the model in \eqref{eq: linear model} with $d=2$. Blue and red dots for the datapoints. Bottom panels for the linear model $y_j = A_jx +  \varepsilon_j$, $d = 2$ covariates and $n = 10^5$ datapoints. Blue and green dots for the mini-modes found along the lines $y_{i} = A_{i}x$ where $y_i$ is the largest and second largest observation. For both models, $\varepsilon_j \sim t_1$ and $\nu = 1$. Right panels: zoom-in near micromodes.}
        \label{fig:micromodes_and_target}
    \end{figure}

    Micromodes are a nuisance for most computational algorithms used to fit Bayesian models. They create an obvious bottleneck for optimization methods which may fail to converge to the global mode. They also pose challenges for Markov chain Monte Carlo (MCMC; see \citep[]{Roberts_2004} for an overview) methods which are traditionally known to become trapped in local modes \citep[see, e.g.,][]{geyer_annealing_1995, brooks_efficient_2003}. 
    In this work we focus on the Zig-Zag process (ZZP) of \cite{bierkens_zig-zag_2019}, which is a continuous-time non-reversible piecewise deterministic Monte Carlo algorithm. ZZP has attracted recent attention from both theorists and practitioners as  a promising method for accelerating the convergence of Monte Carlo estimators \citep{diaconis2000analysis, gagnon2024asymptotic, ascolani2025fast, sun2010improving, neal2004improving, bierkens2016non, bierkens2023sticky} and support its use for multi-modal contexts \cite{monmarche_piecewise_2016, vasdekis_note_2022}. This algorithm is designed to mitigate diffusive random-walk behavior and instead  promote ballistic dynamics and rapid mixing.
    Furthermore, ZZP possesses the property of principled sub-sampling \citep{bierkens_zig-zag_2019}, whereby, at each iteration of the algorithm, a randomly chosen subset of data can be used to estimate the likelihood without biasing the MCMC output. From a computational perspective, this results in an algorithm whose overall computational cost  grows sub-linearly with the size of the dataset \citep{agrawal_large_2024}, rendering Zig-Zag with subsampling a scalable and more efficient alternative to other traditional methods.

    While Bayesian analysis of heavy-tailed models is well-studied in the literature, the emergence of such micromodes has been largely neglected in the robust Bayesian statistics and Bayesian asymptotics literature. In this paper, we take a step towards an integrated analysis that links the posterior geometry with the behavior of the computational method in this setting. The main contributions of this article are as follows: 
    \begin{enumerate}
        \item  We provide a rigorous analysis of the emergence and identification of micromodes in the posterior $\pi^{(n)}$ on $\RR^d$. Under the assumption $P$ has polynomial tails of order $\beta > 0$, extreme order statistics grow remote at scale $n^{1/\beta}$. We show that for each such extreme observation, say $Y$, the posterior admits (with probability tending to $1$) a \emph{unique} micromode within a $\sqrt{\nu}$-neighborhood of $Y$ (Theorem~\ref{theo:maxima}). We obtain explicit bounds for the location of the micromode and characterize its geometry via its width $W_n$, showing $W_n$ scales polynomially as $n^{1/\beta-1}$ (Theorem~\ref{theo:width}). The proof combines results from extreme value theory with a high-precision approximation of the empirical score function in remote regions of space(Lemma~\ref{lemma:score_bound} and Theorem~\ref{theo:score_bound}). This reduces micromode formation to a tractable local root-finding problem with a strongly convex surrogate potential.
        \item We use the results above to study the time required for a ZZP to escape from the micromodes in one dimension. Using standard renewal arguments, we show that the expected exit time is of the order $n^{(1/\beta - 1)(\nu + 1)}$ for both canonical ZZP and subsampling ZZP (Theorem~\ref{theo:arrhenius_law}). These results indicate that ZZP with subsampling can achieve comparable exit-times, while being computationally cheaper than its canonical counterpart.
        \item Our analysis sheds light on the interaction between the assumed model \eqref{eq: linear model} and the data-generating distribution $P$, and its implication for an MCMC algorithm. In particular, we show that a mismatch in the tail decay can alter the posterior landscape adversely. This leads to a phase transition where an overly light-tailed (underdispersed) assumed model can cause exit times of ZZP to increase sharply, producing a pronounced deterioration in sampling performance.
    \end{enumerate}

     Our analysis focuses on the simple model in \eqref{eq: linear model}, however the qualitative mechanism underlying micromode formation is not specific to this setting. Indeed, similar phenomena arise much more generally whenever (i) individual observations are sufficiently isolated and (ii) the likelihood is heavy-tailed. This is illustrated in the bottom panels of Figure~\ref{fig:micromodes_and_target} for the regression model $y_i = A_i x + \varepsilon_i$, with univariate heavy-tailed noise $\varepsilon_i \sim t_1$ and generic covariates $A_i$. Here, hundreds of micromodes are numerically discovered along the hyperplanes $\{x \colon y_i = A_i x\}$, for the two most extreme observations $y_i$.

Although our work is theoretical, we believe there are important methodological implications when using heavy-tailed models. First, micromodes are likely to be present. Although these modes will contain exponentially small (in $n$) posterior mass, their large width can cause havoc for MCMC (and other) algorithms. In practice, it may often be very difficult to start an MCMC algorithm from the global mode of the posterior. However, the worst effects of the micromodes can be mitigated by trying to ensure that the models chosen allow error distributions {\em at least as heavy} as the actual data distribution.

    \subsection{Related literature}
        Our work was partially inspired by \cite{reeds_asymptotic_1985}, who examined the distribution of micromodes when both the assumed model and the true distribution are $t_1$ (i.e. Cauchy) in dimension one. 
        Theorem~\ref{theo:maxima} resonates with (and in some aspects generalizes) these results by considering more general heavy-tailed data in dimension $d$ and by allowing for model misspecification. In general, finding the roots of the likelihood equation for model \eqref{eq: linear model} is a long-standing problem in the literature \cite[see e.g.,][]{ferguson_maximum_1978, fonseca_objective_2008, barnett_evaluation_1966}. Our analysis provides a step forward in the direction of identifying and characterizing these roots.

        Existing analysis of MCMC largely focuses on uni-modal and log-concave distributions \citep{chewi2023log, gelman2008weakly}. We consider posterior distributions which are outside this class. The analysis of Markov processes on multimodal targets has historically been studied under the framework of metastability \citep{bovier2016metastability}.
        Our MCMC analysis in Section~\ref{sec:ek-formula} expands the work of \cite{monmarche_piecewise_2016, peutrec_eyringkramers_2025}, who proved an Arrhenius law for crossing time of energy barriers for ZZP in a double-well potential example. In particular, they showed that, when the two wells remain a fixed distance apart, the expected exit time on the log-scale scales at the same rate as the height (or depth) $\varepsilon^{-1}$ of the modes as $\varepsilon \to \infty$. In contrast, in our setting, the energy barrier arises not only from the height of the micromode, but also from its width. Finally, in a similar spirit to ours, \cite{bandeira2023free} established an energy-barrier result for gradient-based MCMC in high-dimensional Bayesian inverse problems with cold starts.

    \subsection{Organization of the paper}
        The remainder of the paper is organized as follows. Section~\ref{sec: Assumptions and statement of main results} contains the main results of the paper.  
        Section~\ref{sec:body_micromode} deals with the existence and characterization of micromodes depending on $\beta$ (see Assumption~\ref{assum:density}). 
        Section~\ref{sec:ek-formula} presents an Arrhenius law for the exit time of ZZP from a micromode in one dimension. 
        Section~\ref{sec:essential} discusses the implications of the above results and the deterioration in algorithm performance resulting from the mismatch between $\nu$ and $\beta$.

        Section~\ref{sec:score_approximation}  formalizes, in a general setting, the heuristics presented in \eqref{eq:approx_intro} and approximates the empirical score via a law of large numbers in remote locations of the space with high precision. 
        Section~\ref{sec:micromode_technical} presents the proofs of Theorems~\ref{theo:maxima}--\ref{theo:width}, establishing the existence and width of micromodes. Section \ref{sec:exit_time_body} presents a proof of Theorem~\ref{theo:arrhenius_law}. A discussion on possible extensions of this work is given in Section~\ref{sec:limitations}. More technical statements and proofs are deferred to the appendix.

\section{Main results}\label{sec: Assumptions and statement of main results}
    \subsection{Existence and characterization of micromodes}
    \label{sec:body_micromode}
        Let $(\Omega, \mathcal{F}, \PP)$ be an arbitrary probability space rich enough to define all the following random quantities. For all $n$, $\{Y_1, \dots, Y_n\}$ is a random dataset of size $n$ in $\mathbb{R}^d$ distributed independently and identically according to a probability distribution $P$. For a fixed $k$, the random vector with $k$-th smallest radius among $Y_1, \dots, Y_n$ is denoted by $Y_{(k)}$. The corresponding distribution of $Y_{(k)}$ is denoted by $P_{(k)}$. The distribution $P$ satisfies the following heavy-tailed assumption.
        \begin{assumption}\label{assum:density} 
        Let $|\cdot|$ denote the Euclidean norm in $\mathbb{R}^d$. The distribution $P$ is a probability distribution on $\RR^d$ with density $p(y) = h(|y|)$ where $h:(0, \infty) \to (0, \infty)$ is locally bounded and such that $\lim_{t \downarrow 0}h(t)$ is positive and finite. The function $h$ is of {\it regular variation}\footnote{A measurable function $h:(0, \infty) \to (0, \infty)$ is of  regular variation with index $\rho \in \mathbb{R}$ if, for all $r > 0$, $\lim_{t \to \infty} h(r t)/h(t) = r^{\rho}$. In such case, we write  $h \in RV_{\rho}$.} with index $-(\beta + d)$ for some $\beta > 0$ and satisfies $\lim_{t \uparrow \infty} t^{\beta+d}h(t) = K(\beta, d)$ for some constant $K(\beta, d) > 0$. Finally, there exists a $t_1$ such that $h$ is non-increasing on $[t_1, \infty)$.
        \end{assumption}

        Roughly speaking, $P$ is a $d-$dimensional isotropic distribution whose radial density exhibits polynomial tail of order $\beta$. Although Assumption~\ref{assum:density} might seem restrictive, it is a common assumption for analyzing heavy-tailed data, see e.g. \cite{resnick_extreme_2008}. Note also that standard multivariate Student-$t$ distributions with $\beta>0$ degrees of freedom satisfy Assumption~\ref{assum:density}. 
        
        Under the parametric Bayesian model \eqref{eq: linear model} for $P$ and the assumption of a non-informative prior, the posterior distribution for the parameter $x$, denoted by $\pi^{(n)}$, is given by,
        \begin{equation}
        \label{eq:regression_post}
            \pi^{(n)}(x) \propto \prod_{j=1}^{n} \frac{1}{(\nu + |x - Y_j|^2)^{(\nu + d)/2}}, \quad x \in \mathbb{R}^d.
        \end{equation} 
        Let $S(x,y) := (x-y)/(\nu + |x-y|^2)$. We use $S_n$ to denote the empirical score
        \begin{equation}
        \label{eq:empirical_score}
            S_n(x) := \frac{-1}{\nu +d} \nabla_x \log\pi^{(n)}(x) = \sum_{j=1}^{n} S(x, Y_{j}), \quad x \in \mathbb{R}^d.
        \end{equation}
        Similarly  
        \begin{equation}
        \label{eq:empirical_info}
            S'_n(x) := \frac{-1}{\nu +d} \nabla^2_x \log\pi^{(n)}(x) = \sum_{j=1}^{n} S'(x, Y_{j}), \quad x \in \mathbb{R}^d,
        \end{equation}
        denotes the empirical information matrix.
        
        Finding the local modes of the posterior $\pi^{(n)}$ defined in \eqref{eq:regression_post} is equivalent to finding the roots of $S_n(x) = 0$ for which the second-order condition $S'_n(x) \succ 0$, i.e. $S'_n(x)$ is positive definite, holds. Repeating the argument in \cite{reeds_asymptotic_1985}, it can be shown  that any local maximum of $\pi^{(n)}$ must lie within a ball with radius $\sqrt{\nu}-$ of one of the datapoints. However, not all points will carry a local maximum in their neighborhood. As argued in the introduction, the ability of a datapoint to manifest a local maximum will depend on how isolated it is from the rest of the dataset. One of the immediate consequences of Assumption \ref{assum:density} is that the extreme values in the dataset scale in their radial component at a polynomial rate of $n^{1/\beta}$.
        \begin{proposition}
        \label{propo:evt}
          Suppose $P$ satisfies Assumption \ref{assum:density} for some $\beta > 0$. 
          Then, for each fixed $k \ge 0$, there exists a $G_k$ which follows a Gamma distribution with shape parameter $k+1$ and unit scale, such that
          \begin{equation*}
          \label{eq:evt1}
            \frac{|Y_{(n-k)}|}{n^{1/\beta}} \overset{d}{\longrightarrow}  \left(\frac{G_k}{A}\right)^{-1/\beta}, 
          \end{equation*}
          as $n \to \infty$, where $A = \frac{2\pi^{d/2}K(\beta, d)}{\beta\ \Gamma(d/2)}$ and $\Gamma(\cdot)$ denotes the gamma  function. 
          
          Moreover, for all $0 < \epsilon < 1/\beta$, as $n \to \infty$,
            \begin{equation*}
            \label{eq:evt2}
                \PP\left(\min_{j \ne {n-k}}|Y_{(j)} - Y_{(n-k)}| > |Y_{(n-k)}|/2n^{\epsilon}\right) \longrightarrow 1.
            \end{equation*}
        \end{proposition}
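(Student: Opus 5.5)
The first claim reduces to classical extreme value theory for the radii $R_j := |Y_j|$, which are i.i.d. with tail $\bar F(t) := \PP(R > t) = \int_t^\infty \frac{2\pi^{d/2}}{\Gamma(d/2)} s^{d-1} h(s)\,ds$. Assumption~\ref{assum:density} gives $s^{d-1}h(s) \sim K(\beta,d) s^{-\beta-1}$, so $\bar F(t) \sim A t^{-\beta}$ as $t \to \infty$, with $A$ as in the statement. We then use the standard representation $\bar F(R_{(n-k)}) \overset{d}{=} U_{(k+1)}$, the $(k+1)$-th smallest of $n$ uniforms, together with $n U_{(k+1)} \overset{d}{\to} G_k \sim \mathrm{Gamma}(k+1,1)$. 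Note that $R_{(n-k)} \to \infty$ in probability, so the asymptotics of $\bar F$ apply. The relation $\bar F(t) \sim A t^{-\beta}$ then yields
\[
\frac{A\,R_{(n-k)}^{-\beta}}{U_{(k+1)}} \to 1
\]
in probability. Combining this with the Gamma limit gives $n^{-1}R_{(n-k)}^{\beta} \overset{d}{\to} A/G_k$, and the continuous mapping theorem gives the first display.

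For the separation claim, write $Y := Y_{(n-k)}$ and $\rho := |Y|/(2n^{\epsilon})$. We split the competitors $j \ne n-k$ into two groups.

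\emph{Distinct radii.} For the other top order statistics $j = n-k-m,\dots,n$ with $m$ fixed, the joint limit of $(R_{(n)},\dots,R_{(n-k-m)})/n^{1/\beta}$ is a vector of distinct values, obtained from the Poisson process limit with Gamma partial sums. Hence $\bigl||Y_{(j)}| - |Y|\bigr| \ge c\, n^{1/\beta}$ with high probability for any fixed $m$. Since $n^{1/\beta} \gg \rho$, this handles those indices.

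\emph{The remaining points.} The rest have radius at most $R_{(n-k-m-1)}$. Here the clean route is conditioning. Given $Y$ (equivalently, given its radius and direction), the remaining $n-1$ points split into $k$ points that are i.i.d. from $P$ restricted to $\{|y| > |Y|\}$, and $n-k-1$ points that are i.i.d. from $P$ restricted to $\{|y| < |Y|\}$. We then bound
\[
\PP\bigl(\exists j:\ |Y_j - Y| \le \rho \,\big|\, Y\bigr) \le (n-1)\,\frac{P(B(Y,\rho))}{P(\{|y|<|Y|\})} + k\,\frac{P(B(Y,\rho))}{\bar F(|Y|)}.
\]
On $B(Y,\rho)$ every point has radius at least $|Y|/2$. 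Because $h$ is eventually non-increasing and regularly varying, $p \le C|Y|^{-\beta-d}$ there, so
\[
P(B(Y,\rho)) \le C |Y|^{-\beta-d}\rho^d = C' |Y|^{-\beta} n^{-d\epsilon}.
\]
On the event $|Y| \asymp n^{1/\beta}$, which has probability close to $1$ by the first part, the first term is $O(n \cdot n^{-1} n^{-d\epsilon}) \to 0$. The second term is $k\,O(n^{-d\epsilon}) \to 0$ as well, since $\bar F(|Y|) \asymp |Y|^{-\beta}$. This even makes the distinct-radii step unnecessary.

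The main obstacle is making this conditioning rigorous: the conditional structure given an order statistic of radii must be stated carefully, including the fact that the direction of $Y$ is independent of its radius by isotropy. I would first do everything on the high-probability event $\{\delta n^{1/\beta} \le |Y| \le \delta^{-1}n^{1/\beta}\}$, then send $\delta \to 0$. Also, $\epsilon < 1/\beta$ keeps $\rho \to \infty$, but only $\epsilon > 0$ is used in the bound.
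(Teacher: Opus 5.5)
Your proof is correct, and it takes a different route from the paper's in both halves.

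\textbf{Limit law.} The paper proves that $F_R$ lies in the Fr\'echet domain of attraction with normalisation $a_n=(An)^{1/\beta}$ (Proposition~\ref{propo:doa}, via Karamata's theorem and the von Mises condition). It then quotes Theorem~2.1.1 of de~Haan--Ferreira for the $(k+1)$-th largest radius. You instead integrate $t^{\beta+d}h(t)\to K(\beta,d)$ directly to get $\bar F(t)\sim At^{-\beta}$. You then use the quantile transform: setting $U_{(k+1)}:=\bar F(R_{(n-k)})$ gives exactly the law of a uniform order statistic, with $nU_{(k+1)}\overset{d}{\to}\mathrm{Gamma}(k+1,1)$. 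This is more elementary and fully self-contained. The paper's citation also packages the joint convergence of several top order statistics, which it invokes elsewhere.

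\textbf{Separation.} The paper first proves a comparison inequality, Proposition~\ref{propo:rejection}. Any event for $(Y_{-(n-k)},Y_{(n-k)})$ has probability at most $4\sqrt{k+1}$ times that of the same event for $n-1$ i.i.d.\ draws from $P$ plus an independent $Z_n\sim P_{(n-k)}$. It then applies a union bound and Lemma~\ref{lemma:ball_probability} to that independent configuration. You condition directly on $Y_{(n-k)}$ and use the exact conditional law: $n-k-1$ points from $P$ restricted to $\{|y|<|Y|\}$ and $k$ from $P$ restricted to $\{|y|>|Y|\}$. This is the same decomposition the paper uses inside the proof of Proposition~\ref{propo:rejection}, but you avoid the passage back to an unconditional model.

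\textbf{What each route buys.} Yours gives an explicit $O\bigl((k+1)n^{-d\epsilon}\bigr)$ bound with no comparison constant. Your truncation on $\{|Y|\ge\delta n^{1/\beta}\}$ also makes the final limit clean. The paper instead appeals to bounded convergence for $\mathbb{E}\bigl[n^{1-d\epsilon}|Z_n|^{-\beta}L(|Z_n|)\bigr]$, whose integrand is not uniformly bounded when $d\epsilon<1$, so a uniform-integrability argument is really needed there. The paper's detour pays off in reusability. The same comparison inequality is what lets it transfer the i.i.d.\ concentration bound of Lemma~\ref{lemma:score_bound} to the data-dependent centre $Y_{(n-k)}$ in Theorem~\ref{theo:score_bound}.

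\textbf{Minor points.} The obstacle you flag about isotropy is moot. The conditional structure holds for any $P$ with a density, since radii are a.s.\ distinct and the restricted laws depend on $Y_{(n-k)}$ only through its radius. As you note, only $\epsilon>0$ is used.
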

        To make our constructions explicit in the remainder of this paper, we suppose that the distributional convergence in Proposition~\ref{propo:evt} holds almost-surely. Note that this is possible due to Skorokhod's representation theorem. 
        
        Proposition \ref{propo:evt} implies that for each fixed $k$, the point with the $k-$th biggest radius, $Y_{(n-k)}$, becomes more remote and isolated from the rest of the dataset at a rate $n^{1/\beta}$. From \eqref{eq:approx_intro}, the score contribution of the rest of the dataset in a neighborhood of $Y_{(n-k)}$ may then be approximated by $n^{1-1/\beta}$. When $\beta > 1$, $n^{1-1/\beta}$ increases to infinity. Thus, the isolation of $Y_{(n-k)}$ is not enough for a micromode to emerge within the $\sqrt{\nu}-$neighborhood of $Y_{(n-k)}$, as implied by the next theorem. Let $B_r(y)$ denote the ball of radius $r$ centered at $y$.
        \begin{theorem}
        \label{theo:no_roots}
            Suppose Assumption~\ref{assum:density} holds with $\beta > 1$. Let $k \ge 0$ be fixed. With probability going to $1$ as $n \to \infty$, for all $x \in B_{\sqrt{\nu}}(Y_{(n-k)})$, there exists a $v \in \mathbb{S}^{d-1}$ such that $v^TS_n(x) < 0$.
        \end{theorem}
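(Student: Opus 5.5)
Since $S_n(x)$ is, up to the factor $-1/(\nu+d)$, the gradient of $\log \pi^{(n)}$, the statement says that $S_n(x)\neq 0$ on $B_{\sqrt{\nu}}(Y_{(n-k)})$, so that no stationary point lies there. The natural witness is $v = -x/|x|$. We split
\[
S_n(x) = S(x,Y_{(n-k)}) + \sum_{j\neq n-k} S(x,Y_{(j)}).
\]
The first term is bounded in norm by $\sup_{t\ge0} t/(\nu+t^2) = 1/(2\sqrt{\nu})$. It then suffices to show that, with probability tending to one and uniformly over $x\in B_{\sqrt\nu}(Y_{(n-k)})$, the remaining sum satisfies
\[
\frac{x^T}{|x|}\sum_{j\neq n-k} S(x,Y_{(j)}) \;\ge\; c\, n^{1-1/\beta}
\]
for some $c>0$. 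Because $\beta>1$, this lower bound diverges and dominates $1/(2\sqrt\nu)$. Hence $v^TS_n(x)<0$ for this $v$.

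To obtain the lower bound, set $R=|Y_{(n-k)}|$. By Proposition~\ref{propo:evt}, $R\asymp n^{1/\beta}$, and $|x| = R+O(1)$ uniformly on the ball. Split the other observations into bulk points with $|Y_j|\le \delta R$, for a small fixed $\delta$, and the remaining points.

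\emph{Bulk points.} For these, $x^T(x-Y_j)\ge |x|^2(1-\delta)-O(|x|)>0$ and $\nu+|x-Y_j|^2\le (1+\delta)^2|x|^2(1+o(1))$. Each such term therefore contributes at least $\frac{1-\delta}{(1+\delta)^2}\,\frac{1}{|x|}(1+o(1))$. Since all but $o(n)$ of the points are bulk points (because $P(|Y|>\delta R)\to 0$), their total contribution is at least $c\, n/R \asymp n^{1-1/\beta}$.

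\emph{Remaining points.} There are at most of order $n P(|Y|>\delta n^{1/\beta}) = O(\delta^{-\beta})$ of them with high probability, again by regular variation (Assumption~\ref{assum:density}). By the second part of Proposition~\ref{propo:evt}, each satisfies $|x - Y_{(j)}| \ge R/(2n^{\epsilon}) - \sqrt\nu$. Each of their terms is thus bounded in norm by $2n^{\epsilon}/R$, so together they contribute at most $O(\delta^{-\beta} n^{\epsilon-1/\beta})$. Choosing $\epsilon<1/\beta$ small, this is $o(n^{1-1/\beta})$ whenever $\beta>1$; indeed it is even $o(1)$.

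The main obstacle is making the bulk estimate uniform in $x$ over the ball and rigorous in probability. This is the content anticipated by Lemma~\ref{lemma:score_bound} and Theorem~\ref{theo:score_bound}, which are not yet available at this point, so the estimate has to be done directly. The key observation is that the bulk bound above is deterministic and uniform once $|Y_j|\le\delta R$. What remains random is only the count of non-bulk points; it suffices to show this count is $O_P(\delta^{-\beta})$ (Markov's inequality applied to its binomial expectation) and that the bulk count is at least $n/2$.

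Intermediate points with $\delta R<|Y_j|$ could in principle lie near $x$. However, every such point other than $Y_{(n-k)}$ is at distance at least $R/(2n^\epsilon)$ from $Y_{(n-k)}$, which handles them. Note also that $R\to\infty$ and is bounded below by $c\,n^{1/\beta}$ with high probability, via the almost-sure version of Proposition~\ref{propo:evt}. This ensures $n/R\gtrsim n^{1-1/\beta}\to\infty$, which closes the argument.
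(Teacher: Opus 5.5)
Your proposal is correct and is essentially the paper's argument. Both use the witness $v=-x/|x|$ and split the remaining observations by radius relative to $|Y_{(n-k)}|$; the paper uses the cutoff $|Y_{(n)}|/2-\sqrt{\nu}$. Both then give a uniform per-term lower bound of order $1/R$ on the bulk, for a divergent total of order $n/R\asymp n^{1-1/\beta}$, and use a Markov-inequality bound showing there are only $O_P(1)$ non-bulk points, each contributing at most $1/(2\sqrt{\nu})$.

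The isolation bound from Proposition~\ref{propo:evt} is therefore superfluous. Also, in your last paragraph it is the upper bound $R=O_P(n^{1/\beta})$, not the lower bound, that gives $n/R\gtrsim n^{1-1/\beta}$; both bounds follow from Proposition~\ref{propo:evt}.
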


        On the other hand, when $\beta \le 1$, the first term in \eqref{eq:approx_intro} becomes negligible as $n$ increases. In particular, as long as $|Y_{(n-k)}| > 2n\sqrt{\nu}$, the likelihood contribution of $Y_{(n-k)}$ in its $\sqrt{\nu}-$neighborhood dominates the rest of the dataset and results in a micromode. 
        \begin{theorem}
        \label{theo:maxima}
            Suppose Assumption \ref{assum:density} holds with $0<\beta\le 1$. Let $k \ge 0$ be fixed. For $n > k$, let $E_n$ be the event defined in \eqref{eq:en_sets}. There exists an $N_0 \ge 4$ such that for all $n > N_0$, on the event $E_n$, there exists a unique local maximum $x_n^{+}$ of $\pi^{(n)}$ in the $\sqrt{\nu}-$neighborhood of $Y_{(n-k)}$. Additionally, $x_n^{+}$ satisfies, 
            \begin{equation}
            \label{eq:cbeta}
                |Y_{(n-k)} - x_n^{+}| \le \left(\frac{4\nu}{G_k+ c_{\beta}}\right)n^{1-1/\beta}; \qquad c_{\beta} = \begin{cases}
                    0, & 0 < \beta < 1,\\
                    2\sqrt{\nu}, & \beta = 1.
                \end{cases},
            \end{equation}
            where $G_k$ is as in the statement of Proposition~\ref{propo:evt}. Moreover, define $A_n := \{|Y_{(n-k)}| > 2n\sqrt{\nu}\}$. Then, $E_n \subset A_n$ for large $n$. And, as $n \to \infty$,
            \begin{equation*}
            \label{eq:lim_En}
                \PP(E_n \mid A_n) \to 1, \quad \PP(A_n) \longrightarrow \begin{cases}
                    1, & 0 < \beta < 1,\\
                    \PP(G_k > 2\sqrt{\nu}), & \beta = 1.
                \end{cases}
            \end{equation*}
        \end{theorem}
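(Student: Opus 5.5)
We reduce existence and uniqueness of the micromode to a local root-finding problem for the empirical score $S_n$ of \eqref{eq:empirical_score} inside $B_{\sqrt{\nu}}(Y_{(n-k)})$. Write $Y:=Y_{(n-k)}$ and split
\[
S_n(x) = S(x,Y) + R_n(x), \qquad R_n(x) := \sum_{j\ne n-k} S(x,Y_{(j)}).
\]
The event $E_n$ of \eqref{eq:en_sets} is taken to be the intersection of three conditions. The first is the isolation event of Proposition~\ref{propo:evt}, namely $\min_{j\ne n-k}|Y_{(j)}-Y|>|Y|/2n^{\epsilon}$. The second is $A_n=\{|Y|>2n\sqrt{\nu}\}$, together with the Skorokhod a.s. representation $|Y|n^{-1/\beta}\to (G_k/A)^{-1/\beta}$. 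The third is a high-probability bound on the remainder, supplied by Lemma~\ref{lemma:score_bound} and Theorem~\ref{theo:score_bound}: uniformly on $B_{\sqrt\nu}(Y)$,
\[
|R_n(x)| \le \sum_{j\ne n-k}\frac{1}{|x-Y_{(j)}|}\lesssim \frac{n}{|Y|}(1+o(1)),
\]
and likewise $\|\nabla R_n(x)\|\lesssim n^{1+2\epsilon}/|Y|^2$. The crude bound $|S(x,y)|\le 1/|x-y|$ handles the bulk points, whose distance is at least $|Y|/2$. The few other extremes are controlled by the isolation event, which gives distance at least $|Y|/2n^{\epsilon}$. With these choices, $E_n\subset A_n$ holds by construction. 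The limits $\PP(E_n\mid A_n)\to1$ and $\PP(A_n)\to 1$ (for $\beta<1$, since $|Y|\asymp n^{1/\beta}\gg n$) or $\to\PP(G_k>2\sqrt\nu)$ (for $\beta=1$, under the normalisation implicit in the statement) then follow from Proposition~\ref{propo:evt}.

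Next comes the surrogate analysis. The single-point potential $\phi(x)=\tfrac12\log(\nu+|x-Y|^2)$ has gradient $S(x,Y)$. Its Hessian is
\[
\frac{(\nu+|u|^2)I-2uu^T}{(\nu+|u|^2)^2}, \qquad u=x-Y,
\]
whose smallest eigenvalue is $(\nu-|u|^2)/(\nu+|u|^2)^2$. This is positive on the open ball but degenerates at its boundary. I would therefore work on a smaller ball $B_{\sqrt{\nu}/2}(Y)$, where the Hessian is $\succeq c_\nu I$ with $c_\nu>0$. There the perturbation $\nabla R_n=O(n^{1+2\epsilon-2/\beta})$ is $o(1)$ for $\beta\le1$ and $\epsilon$ small, so $S_n'$ is positive definite. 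The radial component of $S(x,Y)$ is $|u|/(\nu+|u|^2)$, which at $|u|=\sqrt\nu/2$ equals $2/(5\sqrt\nu)$. On $A_n$ this exceeds $|R_n|\le n/|Y|\cdot(1+o(1))<1/(2\sqrt\nu)\cdot(1+o(1))$; the constants are tight and may need adjusting, for instance by choosing the radius more carefully using the factor $2$ in $A_n$. Hence $S_n$ points outward on the sphere of that radius. A degree or Brouwer argument applied to the strongly convex potential $-\log\pi^{(n)}$ on that ball gives exactly one critical point $x_n^+$, and it is a local maximum of $\pi^{(n)}$.

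Uniqueness on the annulus $\sqrt\nu/2\le|u|\le\sqrt\nu$ is where I expect the main obstacle, because strong convexity fails there. My first attempt is to show that there $u^TS_n(x)>0$, i.e. $|u|^2/(\nu+|u|^2)>|u|\,|R_n|$. This requires $|u|/(\nu+|u|^2)\ge 1/(2\sqrt\nu)>n/|Y|$, which holds because $|u|/(\nu+|u|^2)\ge 1/(2\sqrt\nu)$ for $|u|\le\sqrt\nu$ with equality only at $|u|=\sqrt\nu$. This explains the threshold $2n\sqrt\nu$ in $A_n$. At $|u|$ near $\sqrt\nu$ the inequality holds only with a margin of order $1-2n\sqrt\nu/|Y|$, so the $1+o(1)$ error in $R_n$ must be beaten. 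This is precisely why $E_n$ must encode a sharper remainder estimate, with error $o(n/|Y|)$ and hence the high-precision Theorem~\ref{theo:score_bound}, and why in the case $\beta=1$ the conditioning on $A_n$ is essential.

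Finally, the location bound follows from the root equation: $S(x_n^+,Y)=-R_n(x_n^+)$. Using $|u|/(\nu+|u|^2)\ge |u|/(2\nu)$ for small $|u|$, we get $|u|\le 2\nu|R_n|\le 2\nu(1+o(1))\,n/|Y|$. Substituting $|Y|\approx n^{1/\beta}(G_k/A)^{-1/\beta}$ yields a bound of the form $C\nu n^{1-1/\beta}/(G_k+c_\beta)$. The shift $c_\beta$ when $\beta=1$ comes from using $|Y|-\sqrt\nu$-type denominators together with $G_k>2\sqrt\nu$ on $A_n$, and the constant $4$ absorbs the slack factors.
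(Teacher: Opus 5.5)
\textbf{The proof has a genuine gap: the two-region split at the fixed radius $\sqrt{\nu}/2$ cannot work for $\beta=1$, and the inequality you use on the annulus is reversed.} This is the step you flagged yourself. Since $\nu+t^2\ge 2\sqrt{\nu}\,t$, the map $t\mapsto t/(\nu+t^2)$ is bounded by $1/(2\sqrt{\nu})$. It is increasing on $[0,\sqrt{\nu}]$, with derivative $(\nu-t^2)/(\nu+t^2)^2$, and attains $1/(2\sqrt{\nu})$ only at $t=\sqrt{\nu}$. So on $|u|\le\sqrt{\nu}$ you have $|u|/(\nu+|u|^2)\le 1/(2\sqrt{\nu})$, not $\ge$.

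Consequently the radial margin is largest on the sphere $|u|=\sqrt{\nu}$ and smallest at the inner radius. The radii where $S(\cdot,Y)$ beats a remainder of size $c<1/(2\sqrt{\nu})$ form an interval $(d^+,d^-)$ around $\sqrt{\nu}$, where $d^\pm$ are the roots of $t/(\nu+t^2)=c$. For the same reason, $2/(5\sqrt{\nu})$ need not exceed a quantity that is only known to be below $1/(2\sqrt{\nu})$.

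For $\beta<1$ this is harmless: $n/|Y|\to0$, and your argument goes through once the annulus bound is corrected to $|u|/(\nu+|u|^2)\ge 2/(5\sqrt{\nu})$. For $\beta=1$ it is fatal. The event $\{2\sqrt{\nu}<|Y|/n<5\sqrt{\nu}/2\}$ has asymptotically positive probability. On it, the exact identity $|u|/(\nu+|u|^2)=|R_n(x_n^+)|\approx n/|Y|$ places the micromode itself at distance between $\sqrt{\nu}/2$ and $\sqrt{\nu}$ from $Y$. So both ``outward on the sphere of radius $\sqrt{\nu}/2$'' and ``no critical point on the annulus'' are false there. Separately, the crude bound $|S(x,y)|\le1/|x-y|$ with bulk distance $|Y|/2$ only gives $|R_n|\le 2n/|Y|$; the sharp constant genuinely needs Theorem~\ref{theo:score_bound}.

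\textbf{The missing idea is an adaptive inner radius, with a margin that does not shrink in $n$.} The paper does this as follows.
\begin{enumerate}
\item Set $d_n=n^{1-1/\beta}/m_1$ with $c_\beta<m_1<m_2<G_k$, and let $d_n^{\pm}$ be the roots of $t/(\nu+t^2)=d_n$.
\item Prove $v^TS_n(Y+vt)>0$ for all $t\in[d_n^+,d_n^-]$, an interval containing $[d_n^+,\sqrt{\nu}]$. This is Lemma~\ref{lemma:outward_grad}, with margin $n^{1-1/\beta}(1/m_1-1/m_2-n^{-\delta})$.
\item Apply the convexity argument on a closed ball of radius $r\in(d_n^+,\sqrt{\nu})$.
\end{enumerate}
Convexity does not actually fail on your annulus. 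Your own $O(n^{1+2\epsilon}/|Y|^2)$ Hessian perturbation bound gives $S_n'\succ0$ on $\overline{B_r(Y)}$ for every fixed $r<\sqrt{\nu}$; this is Lemma~\ref{lemma:pd_property}.

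What is essential is that $d_n^+$ stays bounded away from $\sqrt{\nu}$, i.e.\ that $n/|Y|$ stays below $1/(2\sqrt{\nu})$ by a fixed amount. The event $A_n$ does not provide this, because it allows $|Y|/n$ arbitrarily close to $2\sqrt{\nu}$. There both the convexity constant at radius $d^+$ and the radial margin vanish, and no $o(n/|Y|)$ remainder estimate closes the argument. This is why $E_n$ in \eqref{eq:en_sets} contains the limit event $A=\{G_k>c_\beta\}$ rather than $A_n$; the almost-sure representation then gives $|Y_{(n-k)}|n^{-1/\beta}>m_2$ eventually.

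It follows that $E_n\subset A_n$ is not true by construction. It follows from $A\subset\liminf_n A_n$. Likewise, $\mathbb{P}(E_n\mid A_n)\to1$ comes from $\mathbb{P}(A_n)\to\mathbb{P}(A)$ together with Theorem~\ref{theo:score_bound} and Proposition~\ref{propo:evt}.

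Your location bound via $|u|/(2\nu)\le|R_n(x_n^+)|$ is sound and is essentially the paper's $d_n^+<2\nu d_n$. The shift $c_\beta$ comes from choosing $m_1=(G_k+c_\beta)/2$, not from $|Y|-\sqrt{\nu}$ denominators.
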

        Theorem~\ref{theo:maxima} is a consequence of the scaling of $Y_{(n-k)}$ given by Proposition~\ref{propo:evt} and the approximation of the empirical score $S_n$ via a law of large numbers in Lemma~\ref{lemma:score_bound}. Given that $|Y_{(n-k)}| > 2n\sqrt{\nu}$, it implies the existence of exactly one local maximum within a $\sqrt{\nu}-$distance of $Y_{(n-k)}$ with limiting probability $1$. This is easily extended to a collection of any finite $K$ extreme values $\{Y_{(n - k_1)}, \dots, Y_{(n - k_K)}\}$ via a simple union bound and the joint weak convergence of extreme order statistics \citep[see][Theorem 2.1.1]{de_haan_extreme_2006}. Theorem~\ref{theo:maxima} generalizes \cite{reeds_asymptotic_1985} to dimensions greater than one, to more general data distributions, and to possibly misspecified models (here $\nu$ vs $\beta$), which was earlier stated only for the one-dimensional Cauchy distribution in a well-specified model. However, it is not a complete generalization since Theorem~\ref{theo:maxima} only extends to finitely many points and hence does not cover all observations greater than $2n\sqrt{\nu}$ although, see also the discussion following Lemma~\ref{lemma:score_bound}.

        Let $k$ be fixed. For large $n$, let $x_n^{+}$ be the unique local maximum of $\pi^{(n)}$ in the neighborhood of $Y_{(n-k)}$ guaranteed by Theorem~\ref{theo:maxima}. Then, $S_n(x_n^{+}) = 0$ and $S'_n(x_n^{+})$ is positive definite. The continuity of $S_n$ implies that there exists a $t > 0$ such that $v^T S_n(x + vu) > 0$ for all $v \in \mathbb{S}^{d-1}$ and $0 < u < t$. This means starting from $x_n^{+}$, the posterior density decreases monotonically in all directions for a distance $t$. We define the \emph{width} of the micromode associated with $Y_{(n-k)}$ to be the largest such $t$ and denote it by $W_n$. More precisely, we define,
        \begin{equation}
        \label{eq:width}
            W_n := \sup\left \{t \ge 0: v^TS_n(x_{n}^{+} + vt) > 0\  \forall \ v \in \mathbb{S}^{d-1}\right\}.
        \end{equation}
        Note that the notion of width of a local maximum $x$ of an arbitrary function $f$ can be more generally defined. Loosely speaking, it is the largest distance up to which, starting from $x$, $f$ is monotonically decreasing in all directions. See Figure~\ref{fig:width} for an illustration in one dimension. The next theorem shows that $W_n$ scales as $n^{1/\beta-1}$ as $n \to \infty$.

        \begin{figure}
            \centering
            \includegraphics[width=0.6\linewidth]{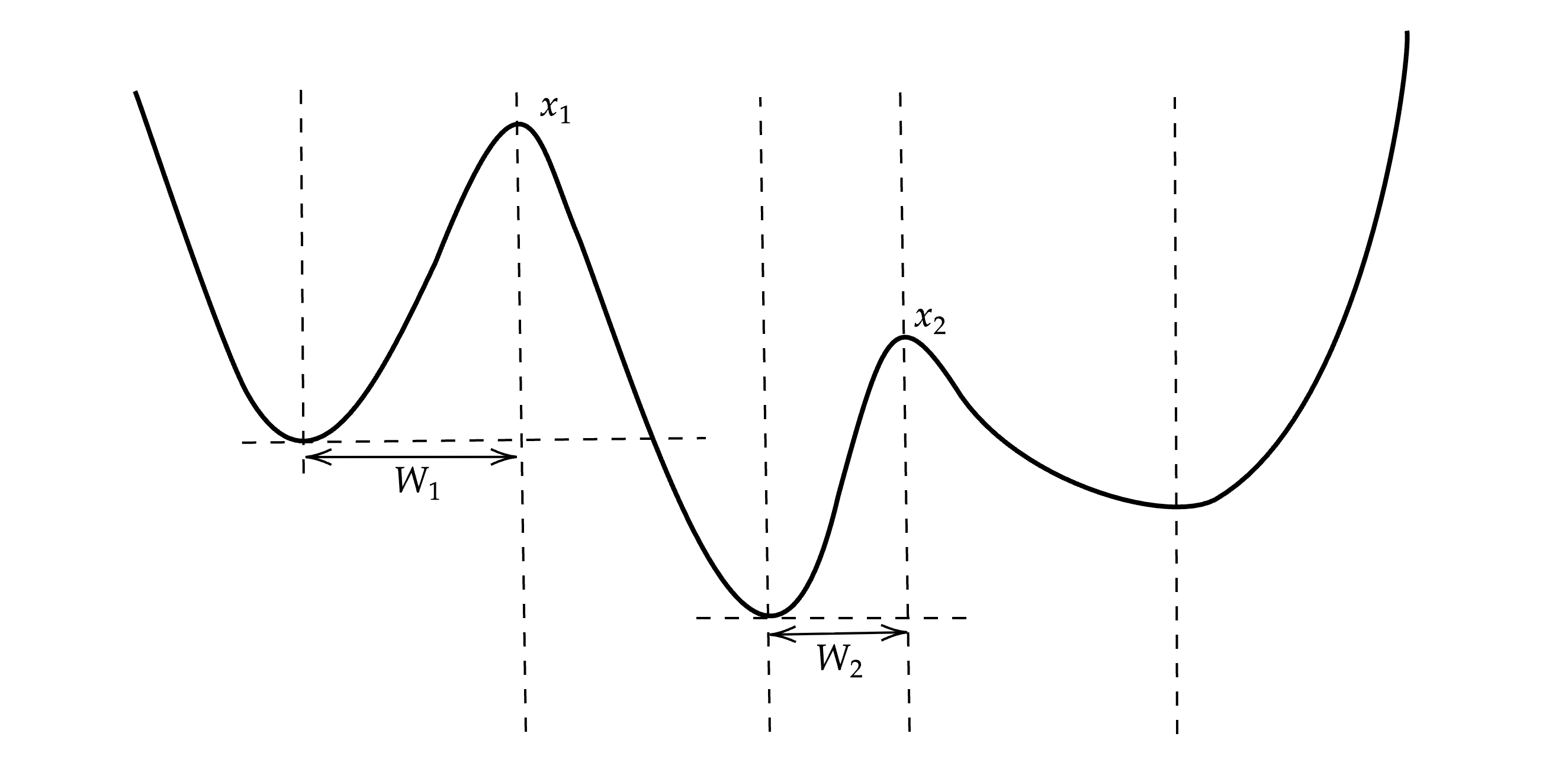}
            \caption{The widths of local maxima at $x_1$ and $x_2$ for some arbitrary function $f$.}
            \label{fig:width}
        \end{figure}
        
    \begin{theorem}
    \label{theo:width}
        Suppose Assumption \ref{assum:density} holds with $0<\beta\le 1$. Let $k \ge 0$ be fixed and $n$ be such that Theorem~\ref{theo:maxima} is in force. Let $W_n$ be the width of the micromode as defined by \eqref{eq:width}. Then, on the event $E_n$,
        \[
            \left(\frac{G_k+c_{\beta}}{2}\right)n^{1/\beta -1} - 2\sqrt{\nu} < W_n < 4G_kn^{1/\beta -1} + \sqrt{\nu},
        \]
        where $c_{\beta}$ is as defined in \eqref{eq:cbeta}.
    \end{theorem}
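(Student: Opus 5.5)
The plan is to reduce the width to a one-dimensional sign analysis along each ray $x_n^{+}+vt$. We decompose the score as $S_n(x)=S(x,Y_{(n-k)})+R_n(x)$, where $R_n$ collects the remaining $n-1$ terms. The approximation of the empirical score in remote regions (Lemma~\ref{lemma:score_bound} / Theorem~\ref{theo:score_bound}, which is what defines the event $E_n$) should give, uniformly over a region of size $o(|Y_{(n-k)}|)$ around $Y_{(n-k)}$,
\[
R_n(x)\approx (n-1)\,\frac{x}{|x|^2},\qquad |R_n(x)|\approx \frac{n}{|Y_{(n-k)}|}\approx \frac{G_k^{1/\beta}}{A^{1/\beta}}\, n^{1-1/\beta}.
\]
The error is small enough that the bracketing constants in the statement absorb it. Here I use the normalisation implicit in \eqref{eq:cbeta}, i.e.\ $n/|Y_{(n-k)}|$ is comparable to $n^{1-1/\beta}/(G_k+c_\beta)$ up to the constants already used in the proof of Theorem~\ref{theo:maxima}. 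The key point is that $R_n$ is nearly a constant vector of size $\varepsilon_n:=n^{1-1/\beta}\cdot O(1/(G_k+c_\beta))\to 0$ (or bounded when $\beta=1$), pointing radially outward.

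Along the ray, write $z=x_n^{+}+vt-Y_{(n-k)}$. Then
\[
v^TS_n = \frac{v^Tz}{\nu+|z|^2}+v^TR_n .
\]
By Theorem~\ref{theo:maxima}, $|x_n^{+}-Y_{(n-k)}|\le 4\nu\,\varepsilon_n$-type bound, so $v^Tz=t+O(\varepsilon_n)$ and $|z|=t+O(\varepsilon_n)$.

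For the \textbf{lower bound}, I would take $t$ between $2\sqrt\nu$ and $\frac{G_k+c_\beta}{2}n^{1/\beta-1}-2\sqrt\nu$. In this range $t/(\nu+t^2)\ge 1/(2t)$ roughly, which exceeds $|R_n|\le 2/((G_k+c_\beta)n^{1/\beta-1})$. Hence $v^TS_n>0$ for every $v$. For $t<2\sqrt\nu$ I would instead use the local strong convexity from the proof of Theorem~\ref{theo:maxima}: $S'(\cdot,Y_{(n-k)})\succ 0$ near $Y_{(n-k)}$ while $\|R_n'\|=O(n/|Y|^2)\to 0$. Together with $S_n(x_n^+)=0$, this gives $v^TS_n(x_n^++vt)>0$ for small $t$, and the two regimes overlap.

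For the \textbf{upper bound}, I would choose the worst direction $v=-Y_{(n-k)}/|Y_{(n-k)}|$, pointing toward the bulk, so that $v^TR_n\approx -n/|Y_{(n-k)}|$. At $t_0=4G_kn^{1/\beta-1}+\sqrt\nu$ we have $t_0/(\nu+t_0^2)\le 1/t_0$. This is smaller than $n/|Y_{(n-k)}|$ once the constants from Proposition~\ref{propo:evt} are inserted, so $v^TS_n(x_n^++vt_0)<0$ and therefore $W_n<t_0$. The ray stays within distance $O(n^{1/\beta-1})=o(|Y_{(n-k)}|)$ of $Y_{(n-k)}$, where the score approximation remains valid.

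The \textbf{main obstacle} is ensuring that the score approximation holds with a controlled relative error uniformly on the whole ball of radius $\sim n^{1/\beta-1}$ around $Y_{(n-k)}$, and not only on its $\sqrt\nu$-neighbourhood. The other $n-1$ points are at distance $\ge |Y_{(n-k)}|/2n^{\epsilon}$ by Proposition~\ref{propo:evt}. The approach I would try first is to choose $\epsilon$ small enough that $n^{1/\beta-1}\ll |Y_{(n-k)}|n^{-\epsilon}$ (valid since $n^{1/\beta-1}/n^{1/\beta}=n^{-1}$), and then apply Theorem~\ref{theo:score_bound} on this enlarged region as part of the definition of $E_n$. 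The remaining work is careful bookkeeping of the constants so that the explicit bounds $\frac{G_k+c_\beta}{2}$ and $4G_k$ come out.
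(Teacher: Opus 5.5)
Your architecture is the paper's: split off $S(x,Y_{(n-k)})$, replace the other $n-1$ terms by $(n-1)x/|x|^2$ via $B_n$, use convexity near $Y_{(n-k)}$ for short rays and an outward-gradient comparison for longer ones, and get the upper bound from the direction $v=-Y_{(n-k)}/|Y_{(n-k)}|$. (As you noticed, Section~\ref{sec:micromode_technical} works with $n^{-1/\beta}|Y_{(n-k)}|\to G_k$ rather than the limit in Proposition~\ref{propo:evt}; I use that normalisation below.)

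\textbf{Upper bound.} This is where the genuine gap is. On $E_n$ as defined in \eqref{eq:en_sets}, the only law-of-large-numbers control on the remaining $n-1$ terms is $B_n$. Since $\hat S_n=S_n$ off $\mathcal F_n$, $B_n$ says nothing outside $\mathcal F_n=B_{2|Y_{(n-k)}|/n}(Y_{(n-k)})$, whose radius is about $2G_kn^{1/\beta-1}$. Your test point $x_n^{+}+vt_0$, with $t_0=4G_kn^{1/\beta-1}+\sqrt\nu$, is at distance more than $4G_kn^{1/\beta-1}$ from $Y_{(n-k)}$. So the claim that the approximation ``remains valid'' there is unsupported. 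Your proposed remedy, building an enlarged-region version of Theorem~\ref{theo:score_bound} into $E_n$, changes the event and hence the statement.

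None of this is needed. Along $v$, the sign of $v^TS_n$ already flips at distance about $|Y_{(n-k)}|/n$ from $Y_{(n-k)}$, where $t/(\nu+t^2)\approx n/|Y_{(n-k)}|$. That is half the radius of $\mathcal F_n$, and one failing radius suffices to bound $W_n$. The paper evaluates at $Y_{(n-k)}(1-2/n)$, on the edge of $\mathcal F_n$, where $v^T\hat S_n<-\frac{n}{|Y_{(n-k)}|}\bigl(\frac12+\frac1{n-2}\bigr)$. It concludes $W_n<|Y_{(n-k)}-x_n^{+}|+2|Y_{(n-k)}|/n<\sqrt\nu+4G_kn^{1/\beta-1}$. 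So $4G_k$ is just a bound for the radius of $\mathcal F_n$, not the location of the sign change.

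You can keep your parametrisation from $x_n^{+}$, which is what \eqref{eq:width} refers to, and take $t=\frac32|Y_{(n-k)}|/n$. This works for large $n$ on $E_n$: there $|Y_{(n-k)}|/n-2\sqrt\nu$ stays bounded away from $0$, so $t-\sqrt\nu>|Y_{(n-k)}|/n$ and $t+\sqrt\nu<2|Y_{(n-k)}|/n$ with room to spare.

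Alternatively, you can keep $t_0$ without touching $E_n$. On $A'_n$, using $\|S'(x,y)\|\le|x-y|^{-2}$, moving a distance $O(|Y_{(n-k)}|/n)$ changes $\sum_{j\ne n-k}S(x,Y_{(j)})$ by only $O(n^{2\epsilon}/|Y_{(n-k)}|)=o(n/|Y_{(n-k)}|)$, because $\epsilon<1/2$. Hence the approximation transports outward from $\mathcal F_n$.

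\textbf{Lower bound.} As written, this does not close either. Take $\beta<1$ (so $c_\beta=0$) and the top of your range, $t\approx\frac{G_k}{2}n^{1/\beta-1}$. The estimate $t/(\nu+t^2)\ge 1/(2t)$ yields only about $n^{1-1/\beta}/G_k$. That is half of the bound $2n^{1-1/\beta}/G_k$ you assert for $|R_n|$. It also equals, to leading order, the true size $n/|Y_{(n-k)}|$. This leaves no room for the $n^{1-1/\beta-\delta}$ error or for the finite-$n$ deviation of $n^{-1/\beta}|Y_{(n-k)}|$ from $G_k$.

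The factor $\frac12$ in the statement has already been spent on the gap between $n/|Y_{(n-k)}|$ and the threshold $d_n=n^{1-1/\beta}/m_1$, with $m_1=(G_k+c_\beta)/2$. You must therefore use $t/(\nu+t^2)\approx 1/t$. Concretely, use the sharp fact that $t/(\nu+t^2)>d_n$ precisely for $t\in(d_n^{+},d_n^{-})$, together with $d_n^{-}-d_n^{+}=\sqrt{1-4d_n^2\nu}/d_n>1/d_n-2\sqrt\nu$, as in Lemma~\ref{lemma:outward_grad} and the paper's proof.

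Your two regimes also do not overlap as stated. Beyond distance $\sqrt\nu$ from $Y_{(n-k)}$, the matrix $S'(\cdot,Y_{(n-k)})$ has the negative radial eigenvalue $(\nu-|x-Y_{(n-k)}|^2)/(\nu+|x-Y_{(n-k)}|^2)^2$. The other $n-1$ terms contribute only $o(1)$ on $A'_n$. So convexity (Lemma~\ref{lemma:pd_property}, which requires $r<\sqrt\nu$) cannot carry you to $t=2\sqrt\nu$. The outward comparison has to be pushed down to $|x-Y_{(n-k)}|\approx d_n^{+}$, which is where the paper splits.

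This is most delicate for $\beta=1$. There $n^{1/\beta-1}=1$, the whole micromode has size $O(1)$, and perturbative statements such as $v^Tz=t+O(\varepsilon_n)$ carry no smallness. You need exact inequalities of the $d_n^{\pm}$ type instead.
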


    Theorem~\ref{theo:width}, together with Theorem~\ref{theo:maxima}, characterizes the micromodes occurring due to extreme observations. In particular, Theorem~\ref{theo:width} gives an estimate of the size of a micromode when it exists. From a computational perspective, these results imply the existence of an energy barrier in the posterior distribution which an MCMC algorithm needs to overcome to leave a micromode. In the next section, we study the time required by ZZP to overcome this energy barrier when initialized in one of the micromodes.

    \subsection{Exit times for ZZP to leave a micromode}
    \label{sec:ek-formula}
        For the remainder of this section, we restrict our attention to the one-dimensional case and particularly focus on the micromode associated with $Y_{(n)}$ i.e. the farthest data point. The reader may observe that even this seemingly simple setting demands a non-trivial analysis.

        A one-dimensional ZZP  $(X_t, V_t)_{t\ge 0}$ is a continuous-time Markov process defined on the augmented space of position and velocity $\mathcal{Z} = \mathbb{R} \times \{-1,+1\}$. Given a switching rate $\lambda\colon \mathcal{Z} \mapsto \RR^+$, its dynamics are determined by the extended Markov generator
        \begin{equation}
            \mathcal{L}f(x, v) = v\cdot \partial_xf(x, v) + \lambda(x, v)\left\{f(x, -v) - f(x, v)\right\}, \quad (x,v) \in \mathcal{Z},
        \end{equation}       
    for functions $f$ in the core of the generator \citep[see e.g.][]{davis2018markov}. Loosely speaking, $X_t$ moves with a constant velocity $V_t$ and, at random times with rate $\lambda(X_t, V_t)$, it switches its velocity values between $+1$ and $-1$. For each $n$, let $Z^n_t = (X^n_t, V^n_t)$ be the one-dimensional ZZP with switching rate $\lambda_n$. If $\lambda_n(x,1) - \lambda_n(x,-1) = -\partial\log\pi^{(n)}(x) = (\nu + 1)S_n(x)$ for all $x$, then, the marginal distribution of $X^n_t$ is invariant for $\pi^{(n)}$ \citep[see][for details]{bierkens_zig-zag_2019}. We consider the following two choices of switching rates:
        \begin{equation}
        \label{eq:switching_rates}
            \begin{gathered}
                \lambda^{\text{can}}_n(x, v) = \left(v\partial \log \pi^{(n)}(x)\right)_{+} = (\nu + 1)\left(v \sum_{j=1}^n S(x, Y_j)\right)_{+}, \\
                \lambda^{\text{ss}}_n(x, v) = (\nu + 1)\sum_{j=1}^n\left(v S(x, Y_j)\right)_{+}.
            \end{gathered}
        \end{equation}
        The choice of $\lambda^{\text{can}}_n$ and $\lambda^{\text{ss}}_n$ corresponds to canonical ZZP and subsampling ZZP respectively of \cite{bierkens_zig-zag_2019}. Although the canonical ZZP can mix better than the subsampling ZZP \citep{andrieu2021peskun}, the subsampling ZZP has the advantage of reducing the implementation cost per iteration by a factor $n$. This is since $\lambda^{\text{ss}}_n(x,v)$ can be unbiasedly estimated by randomly selecting a single observation, say $Y_i$ and then computing $n(\nu + 1)\left(v S(x, Y_i)\right)_{+}$, thus  often improving the overall computational efficiency compared to canonical ZZP \citep[see][for details]{agrawal_large_2024}. We will drop the superscript when the choice of $\lambda_n$ is clear from the context.

        For large $n$, let $x_n^{+}$ be the point of local maximum near $Y_{(n)}$ given by Theorem~\ref{theo:maxima} on the event $E_n$. Let $W_n$ denote the width of the associated micromode defined by \eqref{eq:width}. Suppose the initial condition $X^n_0 = x_n^{+}$ for the ZZP targeting $\pi^{(n)}$. The time taken by the process to exit the micromode is then the first time $t \ge 0$ such that $|X^n_t - x_n^{+}| > W_n$. Without loss of generality, suppose $Y_{(n)} > 0$ (the arguments for $Y_{(n)} < 0$ will follow similarly). From \eqref{eq:empirical_score} and the positive-definiteness of $S'_n$ in the $\sqrt{\nu}$-neighborhood of $Y_{(n)}$ (Lemma~\ref{lemma:pd_property}), it follows that $S_n(x) > 0$ for all $x \ge x_{n}^{+}$ i.e. $\pi^{(n)}$ is monotonically decreasing in the half-line $[x_n^{+}, \infty)$. In this case, the exit on the right is irrelevant since the process will always return back to the micromode with probability $1$. 
        Thus, we will say that the process has exited the micromode when it crosses the width $W_n$ on the left for the first time. More precisely, we define the exit time $\tau_n$ for the ZZP $Z^n_t = (X^n_t, V^n_t)$ by 
        \begin{equation}
        \label{eq:tau_n}
            \tau_n = \inf\left\{t \ge 0: x_n^{+} - X^n_t \ge W_n  \mid (X^n_0, V^n_0) = (x_n^{+}, -1)\right\}.
        \end{equation}
        To study $\tau_n$ precisely, both, the point $x_n^{+}$ and the width $W_n$ must be exactly known. This is a formidable task even for small $n$, see e.g. \cite{ferguson_maximum_1978}. However, it turns out the estimate of the width  in Theorem~\ref{theo:width} is sufficient to estimate $\tau_n$ up to an order of magnitude.
        \begin{theorem}
        \label{theo:arrhenius_law}
            Suppose Assumption~\ref{assum:density} holds with $0 < \beta \le 1$. For all $n \ge 3$, let $Z^n_t$ be a Zig-Zag process with switching rates given by \eqref{eq:switching_rates}. Let $\tau_n$ be as defined in \eqref{eq:tau_n}. Then, for both $\lambda_{n}^{\text{can}}$ and $\lambda_{n}^{\text{ss}}$, as $n \to \infty$
            \[
                \mathbb{E}(\tau_n \mid E_n) = O_{p}\left(n^{(1/\beta - 1)(\nu + 1)}\right),
            \]
            with $E_n$ as in \eqref{eq:en_sets}.
        \end{theorem}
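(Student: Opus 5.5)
The plan is to compute $\mathbb{E}(\tau_n \mid E_n)$ essentially exactly using the one-dimensional structure of the Zig-Zag generator. After that, it suffices to bound the resulting double integral with the estimates from Theorems~\ref{theo:maxima} and~\ref{theo:width}. Throughout, write $U_n := -\log \pi^{(n)}$, so that $U_n' = (\nu+1)S_n$ and $\lambda_n(x,+1)-\lambda_n(x,-1) = U_n'(x)$ for both choices in \eqref{eq:switching_rates}. Set $a_n := x_n^{+} - W_n$.

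\textbf{Step 1: explicit formula for the mean exit time.} Let $f(x,v) := \mathbb{E}_{(x,v)}[\inf\{t: X_t^n \le a_n\}]$ on $(a_n,\infty)\times\{\pm1\}$. It solves $\mathcal{L}f=-1$ with $f(a_n,-1)=0$. Writing the two equations and setting $g := f(\cdot,+1)-f(\cdot,-1)$, one gets:
\begin{itemize}
\item subtracting the equations gives $g' - U_n' g = -2$;
\item the condition that $f$ does not blow up as $x\to\infty$ (the process returns from the right since $U_n'>0$ on $[x_n^+,\infty)$, as noted before \eqref{eq:tau_n}) selects the solution $g(x) = 2\int_x^\infty e^{U_n(x)-U_n(z)}\,dz$;
\item the $v=-1$ equation gives $\partial_x f(x,-1) = 1 + \lambda_n(x,-1)\,g(x)$.
\end{itemize}
Hence
\[
\mathbb{E}(\tau_n\mid E_n) = W_n + 2\int_{a_n}^{x_n^+} \lambda_n(y,-1)\int_y^\infty e^{U_n(y)-U_n(z)}\,dz\,dy .
\]
I would justify this rigorously by a standard optional-stopping/renewal argument for piecewise deterministic processes, which is routine.

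\textbf{Step 2: bounding the integrand on $E_n$.} For $y\in[a_n,x_n^+]$, split $U_n = U^{(1)} + U^{(2)}$, where $U^{(1)}(x) = \tfrac{\nu+1}{2}\log(\nu+(x-Y_{(n)})^2)$ and $U^{(2)}$ collects the remaining $n-1$ points.
\begin{itemize}
\item By Lemma~\ref{lemma:score_bound} (the remote-region score approximation), $|\partial U^{(2)}| \lesssim n/|Y_{(n)}| = O_p(n^{1-1/\beta})$ on the relevant window. By Theorem~\ref{theo:width}, $W_n = O_p(n^{1/\beta-1})$, so $U^{(2)}$ varies by only $O_p(1)$ over the window $[a_n, x_n^+ + O(1)]$. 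Far to the right, $U^{(2)}$ only increases.
\item Consequently $\int_y^\infty e^{-U_n(z)}dz \le C e^{-U_n(x_n^+)}$. The mass of $e^{-U^{(1)}}$ is concentrated within $O(\sqrt\nu)$ of $Y_{(n)}$, its tail being integrable since $\nu>0$, and $|x_n^+-Y_{(n)}|$ is $O_p(n^{1-1/\beta})\to 0$ or $O_p(1)$ by Theorem~\ref{theo:maxima}.
\item Similarly, $e^{U_n(y)-U_n(x_n^+)} \le C\,(\nu+(y-Y_{(n)})^2)^{(\nu+1)/2}\le C' (1+W_n)^{\nu+1}$.
\end{itemize}

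\textbf{Step 3: the rates.}
\begin{itemize}
\item For the canonical process, $\lambda^{\text{can}}_n(y,-1) = (U_n'(y))_- \le (\nu+1)|S_n(y)|$.
\item For subsampling, $\lambda^{\text{ss}}_n(y,-1)=(\nu+1)\sum_j(-S(y,Y_j))_+$. Only $Y_{(n)}$ and points to the right of $y$ contribute, and all others have $|S|\le 1/|y-Y_j|$. By Proposition~\ref{propo:evt} and Lemma~\ref{lemma:score_bound}, this is $O(1/(1+|y-Y_{(n)}|)) + O_p(n^{1-1/\beta})$.
\end{itemize}
Rather than using the crude bound $(1+W_n)^{\nu+1}$ uniformly, I integrate $\lambda_n(y,-1)\,(\nu+(y-Y_{(n)})^2)^{(\nu+1)/2}$ over $y\in[a_n,x_n^+]$. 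The first rate term contributes $\int_0^{W_n+O(1)} (1+u)^{\nu}du = O(W_n^{\nu+1})$. The $n^{1-1/\beta}$ term contributes $n^{1-1/\beta}W_n^{\nu+2} = O_p(W_n^{\nu+1})$. Combining with Theorem~\ref{theo:width} gives $O_p(n^{(1/\beta-1)(\nu+1)})$; the randomness enters only through $G_k$, which is a.s.\ finite and positive, hence the $O_p$.

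\textbf{Main obstacle.} I expect the main obstacle to be Step 2: controlling $U^{(2)}(y)-U^{(2)}(z)$ uniformly over a window of growing length $W_n$ for $0<\beta\le1$. In particular, the borderline case $\beta=1$, where $n/|Y_{(n)}|$ is of order one, needs care. I would first try to bound $|\partial U^{(2)}|$ on $[a_n,\infty)$ using Lemma~\ref{lemma:score_bound} together with the isolation statement of Proposition~\ref{propo:evt}. The point is to check that the product $n^{1-1/\beta}\cdot W_n$ stays $O_p(1)$, which holds exactly because $W_n\asymp n^{1/\beta-1}$.
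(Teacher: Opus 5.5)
Your argument is correct, and it takes a genuinely different route from the paper.

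\textbf{Comparison with the paper.} The paper never computes $\EE(\tau_n)$ directly. It uses the renewal identity \eqref{eq:expected_tau} and bounds the per-excursion exit probability by $p_n/(1+\overline{\gamma_n}W_n)\le p_{\tau,n}\le p_n$ via Proposition~17 of \cite{monmarche_piecewise_2016}. It then estimates $p_n$ through $\hat p_n$ on $B_n$. Finally, it needs separate lemmas showing that $T_n(x_n^+,+1)$ and $\eta_n$ have $O_p(1)$ mean, the latter via a fairly delicate skeleton argument for subsampling. Your observation that in one dimension $g=f(\cdot,+1)-f(\cdot,-1)$ solves $g'-U_n'g=-2$ collapses all of this into one exact formula:
\[
\EE(\tau_n)=W_n+2\int_{a_n}^{x_n^+}\lambda_n(y,-1)\int_y^\infty e^{U_n(y)-U_n(z)}\,dz\,dy ,\qquad a_n=x_n^-.
\]
Since $g$ depends only on $U_n$, the samplers differ only through $\lambda_n(\cdot,-1)$ on the window. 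Because $\lambda^{\mathrm{ss}}_n(\cdot,-1)=\lambda^{\mathrm{can}}_n(\cdot,-1)+\gamma_n$, you even get the exact comparison $\EE\tau_n^{\mathrm{can}}\le\EE\tau_n^{\mathrm{ss}}$. The paper's route buys interpretability (about $1/p_n$ excursions, each of length $O_p(1)$) and stays close to the metastability literature. Yours buys a shorter, exact argument that treats both samplers at once.

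\textbf{Points to tighten.}

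\emph{Selecting $g$.} ``$f$ does not blow up'' is imprecise: since $U_n(x)\approx(\nu+1)n\log x$ far to the right, your $g$ grows linearly, $g(x)\approx 2x/((\nu+1)n-1)$. No selection principle is needed for an upper bound. Your $f$ satisfies $f\ge0$, $\mathcal{L}f=-1$ and $f(a_n,-1)=0$. Dynkin's formula at $\tau_n\wedge\sigma_R\wedge t$ (with $\sigma_R$ the hitting time of $R$; there is no explosion at unit speed), followed by monotone convergence, gives $\EE(\tau_n)\le f(x_n^+,-1)$.

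\emph{The Step~2 obstacle disappears.} On $E_n$, for large $n$, every $Y_{(j)}$ with $j<n$ lies left of $a_n$ (by $A'_n$, since $|Y_{(n)}|n^{-\epsilon}\gg W_n$). Hence $U^{(2)}$ is increasing on $(a_n,\infty)$. As $z\ge y$ inside $g(y)$, this gives $e^{U_n(y)-U_n(z)}\le e^{U^{(1)}(y)-U^{(1)}(z)}$. Therefore $g(y)\le 2K_\nu\bigl(\nu+(Y_{(n)}-y)^2\bigr)^{(\nu+1)/2}$ with $K_\nu=\int_{\RR}(\nu+u^2)^{-(\nu+1)/2}\,du$, and no score approximation is used. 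The same isolation gives $\lambda_n(y,-1)\le(\nu+1)(Y_{(n)}-y)/(\nu+(Y_{(n)}-y)^2)$ on $(a_n,x_n^+)$ for both samplers. This is an equality for subsampling, since $Y_{(n)}$ is the only point to the right of $y$. An exact antiderivative then yields
\[
\EE(\tau_n)\le W_n+2K_\nu\bigl(\nu+(\sqrt{\nu}+W_n)^2\bigr)^{(\nu+1)/2},
\]
and Theorem~\ref{theo:width} finishes the upper bound. The law of large numbers enters only through Theorem~\ref{theo:width}. Wherever you do use it, it must come from $B_n\supseteq E_n$ (Theorem~\ref{theo:score_bound}), not Lemma~\ref{lemma:score_bound}, because $Y_{(n)}$ is not independent of the other points.

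\emph{Lower bound.} The paper reads the result as an exact order: its proof brackets $\EE(\tau_n)$ from both sides, and Section~\ref{sec:essential} needs the lower bound. Your formula supplies this too. On the window $\lambda^{\mathrm{can}}_n(y,-1)=-U_n'(y)$, and integration by parts gives
\[
\EE\tau_n^{\mathrm{can}}=2e^{U_n(a_n)}\int_{a_n}^\infty e^{-U_n}-2e^{U_n(x_n^+)}\int_{x_n^+}^\infty e^{-U_n}-W_n\gtrsim p_n^{-1}-O_p(1+W_n).
\]
Only this lower bound (e.g.\ $p_n^{-1}\gtrsim W_n^{\nu+1}$) genuinely requires the $O_p(1)$ control of the variation of $U^{(2)}$ across the window that $B_n$ provides.
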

Therefore, subsampling ZZP and canonical ZZP take approximately the same time to leave the micromode, rendering subsampling ZZP an attractive choice given its favorable computational cost.

\subsection{Essential micromodes and model misspecification}
\label{sec:essential}
    It is common knowledge that a lighter tailed model for heavy tailed datasets i.e., $\nu > \beta$, leads to larger weights of the likelihoods assigned to the outliers. This, as a consequence, leads to a larger time spent by MCMC methods near the corresponding  micromodes of the posterior distribution. Theorem \ref{theo:arrhenius_law} captures this relationship in a rigorous manner for the Bayesian model in \eqref{eq: linear model} and ZZP.

   Consider now a ZZP initialized at $Y_{(n)}$ in its position component. If it does not get stuck in a micromode, it still requires $O_p(|Y_{(n)}|)$ amount of time to reach the posterior concentration, since the process moves with unit speed $\pm 1$. By Proposition~\ref{propo:evt},  $Y_{(n)} = O_{p}(n^{1/\beta})$, so this travel time  is of the same order and diverges as $n \to \infty$. We therefore rescale the time of the process by a factor $n^{1/\beta}$. If the exit time of the original process is  $\tau_n \ll n^{1/\beta}$, then, after rescaling, this time becomes negligible compared to the total time needed to reach the bulk of the distribution. Conversely, if $\tau_n \gg n^{1/\beta}$, the process remains stuck near $Y_{(n)}$ for an unreasonable time even after rescaling. Naturally, the latter is an undesirable situation. In this case, we say that the micromode is an {\it essential micromode}. More precisely, a micromode is an essential micromode if,
    \begin{equation}
    \label{eq:essential_micromode}
         \frac{\mathbb{E}\left(\tau_n \mid E_n\right)}{|Y_{(n)}|} \to \infty
    \end{equation}
    as $n \to \infty$. By Theorem \ref{theo:arrhenius_law} and Proposition \ref{propo:evt}, essential micromodes occur when 
    \[
        (1/\beta - 1)(\nu +1) > 1/\beta \iff \nu > \frac{\beta}{1 - \beta},
    \]
    and thus, it depends on a combination of the assumed model and the true data distribution $P$.
        \begin{figure}
        \centering
        \includegraphics[width = 0.49\textwidth]{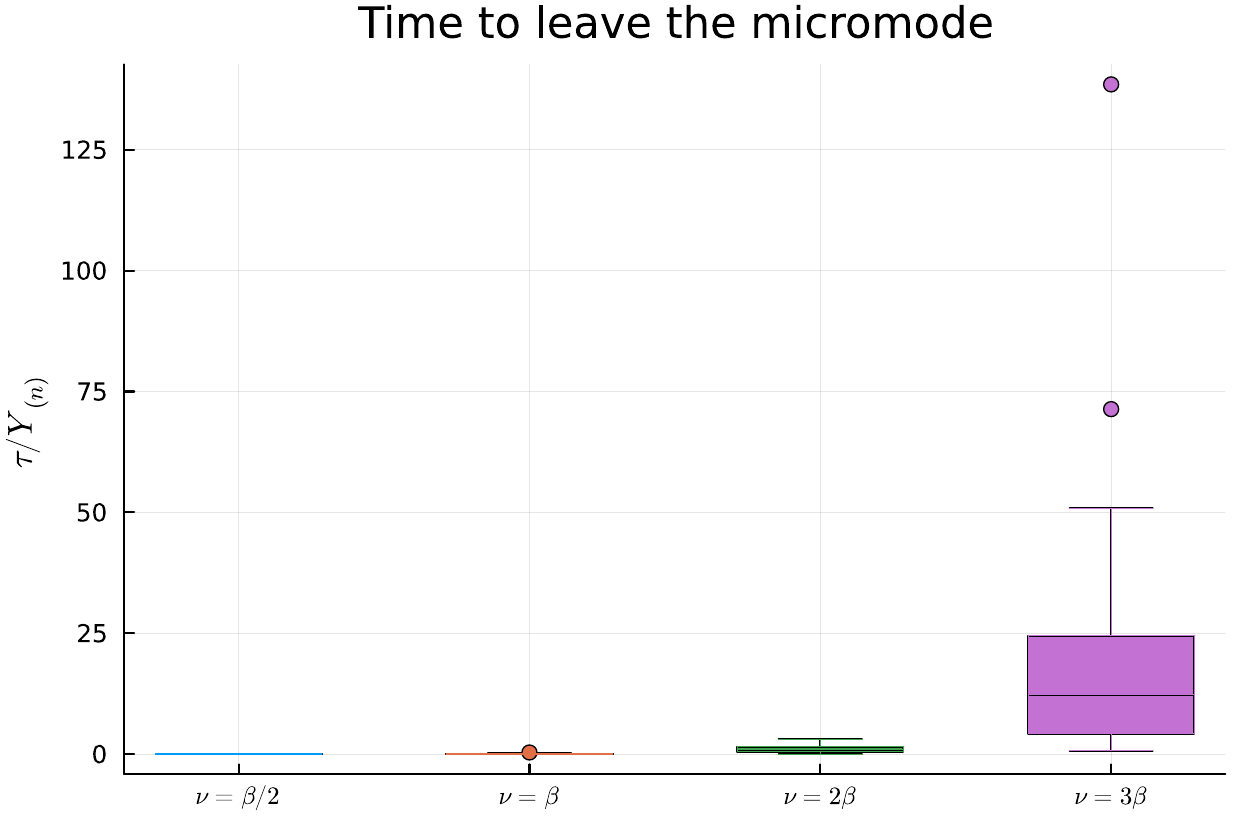}
        \includegraphics[width = 0.49\textwidth]{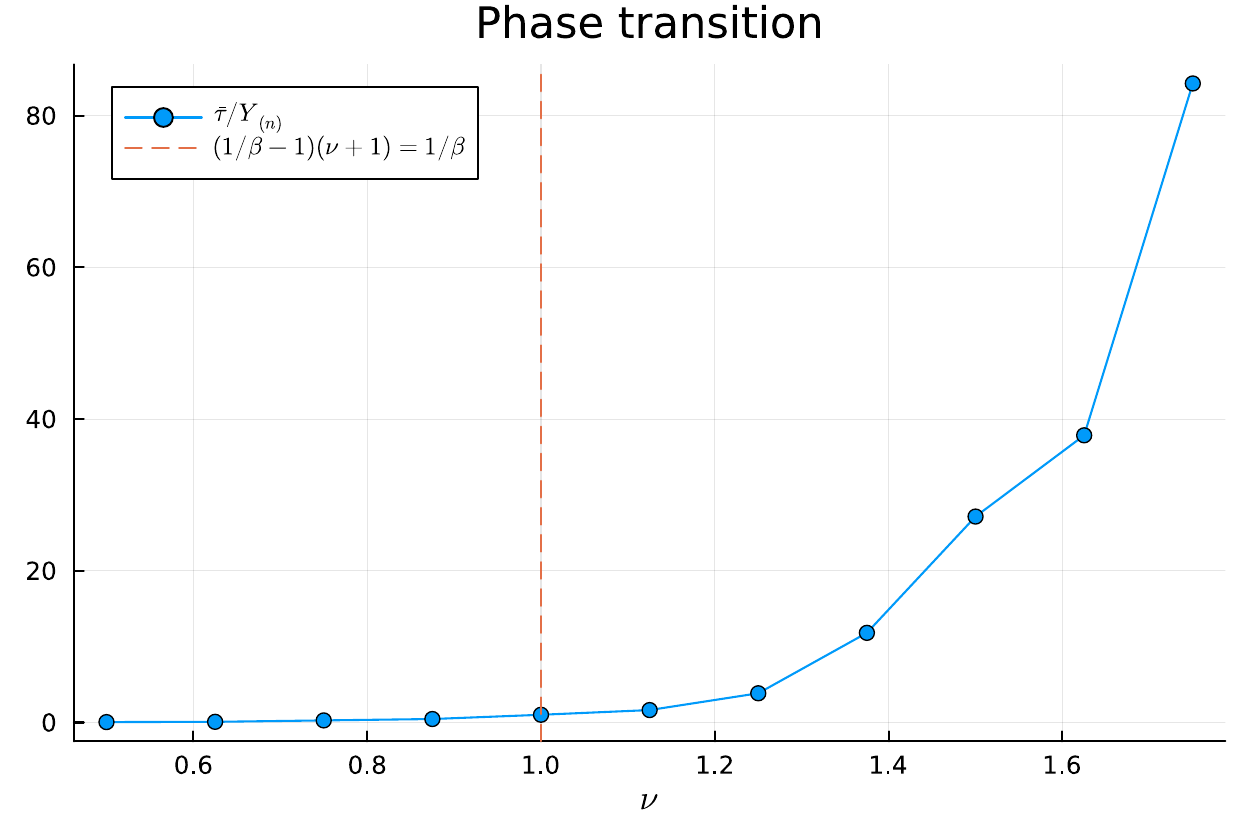}  
    \caption{Time to leave the micromode near $Y_{(n)}$ normalized by $|Y_{(n)}|$ for canonical ZZP targeting $\pi_\nu$ and started in the micromode near $Y_{(n)}$, for different values of $\nu$ ($x$-axis). Here, $n = 3000$ and $\beta = 1/2$. Left: each boxplot was computed with 40 independent trajectories. Right: Average time to leave the micromode computed over 40 independent trajectories targeting $\pi_\nu$, for different $\nu$ ($x$-axis). Dashed line at the phase transition $\beta/(1 - \beta) = 1$.}
    \label{fig:phase transition}
    \end{figure}
    
    Given the above, observe that a well-specified model i.e. $\beta = \nu$ will never give rise to an essential micromode since $0 < \beta \le 1$ and the same is true for (i) overdispersed models i.e. $\nu < \beta$ and (ii) when $\beta = 1$. However, when 
    $0 < \beta < 1$, there is a phase transition for which lighter tailed models beyond the threshold $\beta/(1 - \beta)$ give rise to essential micromodes that trap the process for long durations of time.
    This phase transition is illustrated in Figure~\ref{fig:phase transition} for $\beta = 1/2$. As can be seen in the figure, $\tau_n/ |Y_{(n)}|$ starts diverging for $\nu > 1$.

\section{Empirical score approximation}
\label{sec:score_approximation}
    In this section, we formalize the arguments illustrated in Section~\ref{sec:intro}. Let $Y_1, \dots, Y_n \sim P$ be independently distributed random vectors in $\mathbb{R}^d$ and suppose $P$ satisfies Assumption \ref{assum:density} with $0 < \beta \le 1$.

    The main technical result of this section is Lemma \ref{lemma:score_bound}, which shows that the empirical score can be approximated by a data-independent function in remote locations of the space with high precision. 

    \begin{lemma}
    \label{lemma:score_bound}
       Define, for each $n \ge 4$,
        \begin{equation}
        \label{eq:boldS}
            \mathbb{S}_n(z) = \sup_{x \colon |x-z| \le 2|z|/n}\frac{2|z|}{n}\left|S_n(x) - \frac{nx}{|x|^2}\right|.
        \end{equation}
         Let $0 < \epsilon <  1$ be arbitrary. There exists a $T_0(\epsilon)$ such that for all $n \ge T_0(\epsilon)$ and $|z| > 2n\sqrt{\nu}$,
        \begin{equation}
        \label{eq:boldS_bound}
            \mathbb{S}_n(z) \le 48\nu n^{2\epsilon-1} + \frac{|z|^{-\beta}}{n^{\epsilon(d-1)}}L'(|z|) + g(|z|) + R_n(|z|)
        \end{equation}
        where $L'(|z|) \in RV_{0}$, $g(|z|)  \in RV_{-\beta}$ and $R_n$ is a random variable depending on $Y_1, \dots, Y_n$ and $|z|$ such that for all $t > 0$,
        \begin{equation}
        \label{eq:Rn_bound}
            \PP\left(R_n(|z|) > t\right) \le 5^d\exp\left(-\frac{n^{1-2\epsilon}t^2}{512}\right) + C_d\frac{5^d|z|^{-\beta}}{2^dn^{d\epsilon-1}}L(|z|).
        \end{equation}
        Here $C_d$ is a dimension-dependent constant and $L(t) = t^{\beta+d}h(t)$ for $h$ in Assumption \ref{assum:density}. 
    \end{lemma}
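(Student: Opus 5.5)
Fix $z$ with $|z|>2n\sqrt{\nu}$ and write $r=|z|$. For $x$ with $|x-z|\le 2r/n$ we have $|x|\ge r(1-2/n)\ge r/2$. The plan is to split the sum $S_n(x)=\sum_j S(x,Y_j)$ according to where $Y_j$ lies relative to the scale $r$, and to compare each piece with its natural deterministic proxy $x/|x|^2$ per observation. Concretely, I would use three regions:
\[
I_1=\{j:|Y_j|\le r n^{-\epsilon}\},\qquad I_2=\{j:|Y_j-x|\le r n^{-\epsilon}\},
\]
and $I_3$ the rest, with $|Y_j|>rn^{-\epsilon}$ but $Y_j$ far from $x$.

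\textbf{Bulk points $I_1$.} For $|y|\le r n^{-\epsilon}$, expanding
\[
S(x,y)-\frac{x}{|x|^2}=\frac{(x-y)|x|^2-x(\nu+|x-y|^2)}{(\nu+|x-y|^2)|x|^2}
\]
gives a bound of order $(|y|+\nu/r)/r^2$. Each term is bounded deterministically, and multiplied by $2r/n$ and summed this yields a contribution $\lesssim n^{-\epsilon}+\nu/r$. The mean part of the sum is the delicate one. I would split it as follows:
\begin{itemize}
\item a deterministic term, namely $n$ times the truncated first moment $\mathbb{E}[(S(x,Y)-x/|x|^2)\mathbf{1}\{|Y|\le rn^{-\epsilon}\}]$; using isotropy and the regular variation of $h$ (via Karamata), it is $\lesssim r^{-\beta}$ times a slowly varying factor, and I would label it $g(r)\in RV_{-\beta}$;
\item a fluctuation term, to be controlled by a vector Hoeffding/bounded-differences inequality. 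I would take a $1/2$-net of $\mathbb{S}^{d-1}$ with at most $5^d$ points, apply the scalar Hoeffding inequality to $v^T(\cdot)$ for each net point, and lift to the full norm with a factor $2$. After scaling by $2r/n$, the summands have range $O(n^{\epsilon-1})$, which gives the tail $5^d\exp(-n^{1-2\epsilon}t^2/512)$ forming the first part of $R_n$.
\end{itemize}
The $48\nu n^{2\epsilon-1}$ term should absorb the $\nu$ corrections together with the Taylor remainders once the constants are fixed.

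\textbf{Near points $I_2$.} With high probability $I_2$ is empty. The expected number of $Y_j$ in $B_{2rn^{-\epsilon}}(z)$ is $n\,P(B)\approx n\,(rn^{-\epsilon})^d\, h(r)\sim C_d\, n^{1-d\epsilon} r^{-\beta}L(r)$. Markov's inequality, combined with a union over the net (or simply over the covering), gives the second term of \eqref{eq:Rn_bound}, and on the complement these terms vanish. The factor $2^{-d}$ arises from the choice of radii.

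\textbf{Intermediate points $I_3$.} Here $|x-Y_j|\ge rn^{-\epsilon}$, so each summand satisfies $|S(x,Y_j)|\le n^{\epsilon}/r$, and there are roughly $n\,P(|Y|>rn^{-\epsilon})\sim n\,(rn^{-\epsilon})^{-\beta}$ of them. I would bound the expectation by integrating $|S(x,y)-x/|x|^2|$ against $h$ over the shell, using isotropy so that the leading radial contributions cancel up to the angular region near $x$. This should produce the term $r^{-\beta}n^{-\epsilon(d-1)}L'(r)$, with $L'$ slowly varying coming from $L$. The fluctuation of this piece is folded into the same Hoeffding step.

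\textbf{Main obstacle.} Finally, I take the supremum over $x$ in the small ball, exploiting that all the bounds are uniform once $|x|\in[r/2,2r]$. The hardest step is obtaining the sharp deterministic bound on the truncated mean with the correct regularly varying order $r^{-\beta}$ uniformly in $x$, particularly for $\beta=1$, where the truncated first moment is only slowly varying. For this I would use Karamata's theorem on $\int_0^{R} t^{d}h(t)\,dt$ and the exact cancellation from isotropy, $\int S(x,y)p(y)\,dy$ versus $x/|x|^2$, since for a point mass at the origin these agree up to $\nu/r^2$.
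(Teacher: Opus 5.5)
Your decomposition is workable, but there is a genuine gap in the step you handle in one sentence: taking the supremum over $x\in B_{2|z|/n}(z)$.

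Hoeffding's inequality with a $1/2$-net of $\mathbb{S}^{d-1}$ bounds the centered sum $\sum_j\bigl(f_x(Y_j)-\mathbb{E}f_x(Y)\bigr)$ only at one fixed $x$. The net just reduces a vector norm to finitely many scalar projections. Having constants that are uniform over $|x|\in[r/2,2r]$ does not make the exceptional event uniform in $x$, and you cannot take a union bound over a continuum.

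Your $x$-dependent split makes this worse. The retained summand $f_x(y)=\bigl(S(x,y)-x/|x|^2\bigr)\mathbf{1}\{|y-x|>rn^{-\epsilon}\}$ is discontinuous in $x$, so a Lipschitz or discretization argument cannot close the gap as you have set things up.

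The fix, which is what the paper does, is to center the excluded region at the fixed point $z$ rather than at $x$. Take $A_\epsilon=\{y:|y-z|<|z|\delta_n\}$ with $\delta_n=2/n+1/(2n^{\epsilon})$. Then every $y\notin A_\epsilon$ satisfies $|x-y|\ge|z|/(2n^{\epsilon})$ for all $x$ in the small ball. So $x\mapsto S(x,y)\mathbf{1}\{y\notin A_\epsilon\}$ is Lipschitz there with constant $O(\nu n^{2\epsilon}|z|^{-2})$. The scaled centered sum at any $x$ therefore differs from its value at $z$ by a deterministic $O(\nu n^{2\epsilon-1})$, and you need the concentration inequality only at $x=z$.

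That is exactly where the term $48\nu n^{2\epsilon-1}$ in the statement comes from. Your attribution of it to ``$\nu$ corrections and Taylor remainders'' is wrong: those are $O(\nu|z|^{-2})$ after scaling and belong in $g$.

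There is also a secondary mislabeling: the mean of your $I_3$ piece cannot produce $|z|^{-\beta}n^{-\epsilon(d-1)}L'(|z|)$. For $|Y|\ge M|z|$ with $M$ large, $S(x,Y)\approx 0$, so the summand is about $-x/|x|^2$. These points contribute roughly $2\,\mathbb{P}(|Y|\ge M|z|)$, which is of exact order $|z|^{-\beta}$ with no decay in $n$. Since you integrate the absolute value, isotropic cancellation is ruled out anyway.

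This is harmless, because the piece is $RV_{-\beta}$ and can go into $g$, as in Lemma~\ref{lemma:error_expectation}. There, for $\beta\le 1$, the truncated first moment at $|x|/2$ plus Karamata's theorem is enough without isotropy, so $\beta=1$ is not the real obstacle you identify.

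In the paper, the $n^{-\epsilon(d-1)}$ term comes from $\mathbb{E}|S_2|$, the mean over the near region. You compare the near points directly with $x/|x|^2$ instead of centering them, so you do not need that term at all. Once the uniformity issue is repaired with a $z$-centered exclusion zone, this is a small simplification of your route.
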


    The result in Lemma~\ref{lemma:score_bound} is essentially an error bound on the law of large numbers for the empirical score $S_n$. For $|z| > 2n\sqrt{\nu}$ and arbitrary $\epsilon > 0$, with high probability, the bulk of the dataset lie atleast $|z|/n^{2\epsilon}$ distance away from the ball $B_{2|z|/n}(z)$. For all such datapoints, the score function remains uniformly bounded by $n^{1+2\epsilon}/|z|$ and the standard concentration inequalities apply. On the other hand, the total score contribution by the part of the dataset closer to or inside the ball is controlled trivially by the number of such datapoints. We prove Lemma~\ref{lemma:score_bound} by combining these observations with a uniform approximation of the expected score, i.e., $\EE(S_n(x))$ by the function $x \mapsto nx/|x|^2$. See Appendix \ref{sec:score_approximation_proof} for details.

    \begin{remark}
        Let $(z_n)_{n=1}^{\infty}$ be a sequence with $|z_n| > 2n\sqrt{\nu}$.  Lemma \ref{lemma:score_bound} gives an error bound for the approximation of $S_n$ in the neighborhood of $z_n$. The approximation becomes sharper as $|z_n|$ goes to $\infty$. If $\mathbb{S}_n(z_n)$ goes to $0$ as $n \to \infty$, this means that the empirical score $S_n$ is strongly approximated by $nx/|x|^2$ in a $(|z_n|/n)-$neighborhood of $z_n$. Suppose $|z_n| \sim  n^{\alpha}$ for some $\alpha > 0$ such that $|z_n| > 2n\sqrt{\nu}$. By choosing $\epsilon < 1/2$, it follows from \eqref{eq:Rn_bound} that the right-hand side in \eqref{eq:boldS_bound} goes to $0$ for any $\alpha$ satisfying $\alpha\beta > (1- d/2)$. And so, if a data-point is artificially placed on $z_n$, it would be a potential micromode depending on the configuration of the rest of the dataset. If this data-point is sufficiently isolated from the rest of the dataset, this potential micromode will become an actual micromode.
    \end{remark}

    \begin{remark}
    \label{remark:Zn}
        The above discussion extends to any random sequence  $(Z_n)_{n=1}^{\infty}$ defined independently of the dataset $\{Y_1, Y_2, \cdots, Y_n\}$. Suppose $|Z_n| \sim n^{\alpha}$ for $\alpha > (1- d/2)/\beta$. Then, a simple conditioning argument together with Lemma \ref{lemma:score_bound} implies that for all $\delta < 1/2$,
        \[
            \PP\left(\mathbb{S}_n(Z_n) > n^{1-\alpha - \delta} \mid |Z_n| > 2n\sqrt{\nu}\right) \to 0
        \]
        as $n \to \infty$. Note that $\alpha$ must satisfy $\alpha \ge 1$ otherwise $\PP(|Z_n| > 2n\sqrt{\nu}) \to 0$ and the result is vacuous.
    \end{remark}

    Let $k \in \mathbb{N}$ be fixed. Then, by Proposition \ref{propo:evt}, $|Y_{(n-k)}| \sim n^{1/\beta}$ and $1/\beta > (1-d/2)/\beta$ for all $\beta$ and $d$. Lemma \ref{lemma:score_bound} then suggests that, around $Y_{(n-k)}$, the score contribution from the rest of $(n-1)$ observations can be approximated by $(n-1)x/|x|^2$. Motivated by this, we define an approximate score function in the neighborhood of $Y_{(n-k)}$, the $(n-k)-$th order statistic. Define $\mathcal{F}_n = \{x : |x-Y_{(n-k)}| < 2|Y_{(n-k)}|/n\}$ and
    \begin{equation}
    \label{eq:snhat}
        \hat{S}_n(x) = S_n(x)\bs{1}_{\mathcal{F}^c_n} +\left(\frac{(n-1)x}{|x|^2} + S(x, Y_{(n-k)})\right)\bs{1}_{\mathcal{F}_n}, \quad x \in \mathbb{R}^d.
    \end{equation}

    We now state the main result of this section.

    \begin{theorem}
    \label{theo:score_bound}
        Suppose $0 < \beta \le 1$ and fix $k\ge0$. For $n \ge k$, let the event $A_n = \{|Y_{(n-k)}| > 2n\sqrt{\nu}\}$. Then, for all $\delta < 1/2$, as $n \to \infty$,
        \begin{equation}
        \label{eq:result}
            \PP\left(\sup_{x \in \mathbb{R}^{d}}\left|S_n(x) - \hat{S}_n(x)\right| > n^{1-1/\beta - \delta} \mid A_n \right) \longrightarrow 0. 
        \end{equation}
    \end{theorem}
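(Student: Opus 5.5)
Outside $\mathcal{F}_n$ we have $\hat S_n = S_n$, so the supremum in \eqref{eq:result} reduces to
\[
\sup_{x\in\mathcal{F}_n}\Bigl|\sum_{j\ne j^*} S(x,Y_j) - \frac{(n-1)x}{|x|^2}\Bigr|,
\]
where $Y_{j^*} = Y_{(n-k)}$. The natural plan is to condition on the isolated point and apply Lemma~\ref{lemma:score_bound} to the remaining $n-1$ observations with $z = Y_{(n-k)}$. Conditioning on order statistics destroys the i.i.d.\ structure, so I would first decouple.

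\textbf{Decoupling.} Write the dataset as $Y_{(n-k)}$ together with two groups: the $k$ points of larger radius, and the $n-k-1$ points of smaller radius. Given $|Y_{(n-k)}| = r$, the smaller group is i.i.d.\ from $P$ conditioned on $\{|y|<r\}$, and the larger group is i.i.d.\ from $P$ conditioned on $\{|y|>r\}$.

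\textbf{The $k$ outer points.} By the second part of Proposition~\ref{propo:evt}, with probability tending to one every other point lies at distance at least $|Y_{(n-k)}|/2n^{\epsilon}$ from $Y_{(n-k)}$. Hence for $x\in\mathcal{F}_n$ each such point contributes $|S(x,Y_j)| \le 1/|x-Y_j| \lesssim n^{\epsilon}/|Y_{(n-k)}| \approx n^{\epsilon-1/\beta}$. Since $k$ is fixed and $\epsilon<1-\delta$, this total is $o(n^{1-1/\beta-\delta})$. The $k$ terms $x/|x|^2$ are $O(n^{-1/\beta})$ and are negligible for the same reason.

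\textbf{The inner points.} The conditioned law has density $h(|y|)\mathbf{1}_{|y|<r}/(1-\bar F(r))$, where $1-\bar F(r) = 1-O(r^{-\beta})$. I would rerun the proof of Lemma~\ref{lemma:score_bound} (Appendix~\ref{sec:score_approximation_proof}) with this density; the truncation only improves the tail estimates. Alternatively, I would compare to the unconditioned sum by a coupling argument, which costs at most $O(n r^{-\beta}) = O_p(1)$ points, each contributing at most $n^{\epsilon}/r$ thanks to the isolation event. Either route gives, with $z=Y_{(n-k)}$ and $|z|\sim n^{1/\beta}$ (using the a.s.\ Skorokhod version of Proposition~\ref{propo:evt}),
\[
\sup_{x\in\mathcal{F}_n}\Bigl|\sum_{\text{inner}} S(x,Y_j)-\frac{(n-k-1)x}{|x|^2}\Bigr| \le \frac{n}{2|z|}\,\mathbb{S}_{n}(z).
\]
Multiplying the bound \eqref{eq:boldS_bound} by $n/(2|z|)\sim n^{1-1/\beta}$, it remains to show the bracket is $o(n^{-\delta})$:
\begin{itemize}[label={}]
\item $48\nu n^{2\epsilon-1} = o(n^{-\delta})$ once $\epsilon < (1-\delta)/2$;
\item $|z|^{-\beta}L'(|z|)n^{-\epsilon(d-1)}$ and $g(|z|)$ are of order $n^{-1+o(1)}$, since $|z|^{-\beta}\sim n^{-1}$ and the functions are regularly varying;
\item $R_n$ exceeds $t=n^{-\delta}$ with probability at most $5^d\exp(-n^{1-2\epsilon-2\delta}/512)+O(n^{-d\epsilon})$, which tends to $0$ for $\epsilon<1/2-\delta$. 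Such an $\epsilon>0$ exists exactly because $\delta<1/2$.
\end{itemize}
Conditioning on $A_n$ is harmless. When $\beta<1$, $\PP(A_n)\to1$. When $\beta=1$, $\PP(A_n)\to\PP(G_k>2\sqrt\nu)>0$, so $\PP(\cdot\mid A_n)\le \PP(\cdot\cap A_n)/\PP(A_n)$ and it suffices that the unconditional probabilities on $A_n$ vanish. On $A_n$ the hypothesis $|z|>2n\sqrt\nu$ of Lemma~\ref{lemma:score_bound} holds.

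\textbf{Main obstacle.} The hard step is justifying Lemma~\ref{lemma:score_bound} at the random, data-dependent point $z=Y_{(n-k)}$. Remark~\ref{remark:Zn} needs $Z_n$ independent of the data, and that is exactly why the decoupling is required. I would handle it by conditioning on $r=|Y_{(n-k)}|$ and its direction, then checking that the constants in \eqref{eq:Rn_bound} stay uniform over $r$ in a compact window $[an^{1/\beta},bn^{1/\beta}]$. By Proposition~\ref{propo:evt}, such a window carries probability arbitrarily close to one.
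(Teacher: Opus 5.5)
Your argument is correct, but your decoupling step differs genuinely from the paper's. The paper never conditions on $Y_{(n-k)}$. Instead it invokes Proposition~\ref{propo:rejection} to replace $(Y_{-(n-k)},Y_{(n-k)})$ by $n-1$ i.i.d.\ $P$-points $\tilde Y_{n,j}$ and an independent $Z_n\sim P_{(n-k)}$, at the price of a factor $4\sqrt{k+1}$. It then applies Lemma~\ref{lemma:score_bound} verbatim to $\mathbb{S}_{n-1}(Z_n)$, as in Remark~\ref{remark:Zn}. The $k$ outer points are just part of the i.i.d.\ sample there, so the paper needs no separate isolation step and does not redo the law of large numbers for a truncated density. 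The rate bookkeeping afterwards is identical to yours: $0<\epsilon<1/2-\delta$, the three deterministic terms of \eqref{eq:boldS_bound}, and \eqref{eq:Rn_bound} at $t=n^{-\delta}$.

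Your route costs more work in two places. First, you must rerun Lemmas~\ref{lemma:concentration} and~\ref{lemma:error_expectation} for $P(\cdot\mid |y|<r)$. The existing bounds change only by the factor $1/F_R(r)\le 2$, plus a normalisation error $O(1-F_R(r))\in RV_{-\beta}$ in the expectation, which is of the same order as $g(|z|)$. Second, in the coupling variant you need an extra ball-probability bound for the $O_p(1)$ replaced ``ghost'' points. These are i.i.d.\ from $P(\cdot\mid |y|\ge r)$ and are not covered by Proposition~\ref{propo:evt}. Also, your display should carry $\frac{n-k-1}{2|z|}$ and the truncated analogue of $\mathbb{S}_{n-k-1}(z)$, not $\frac{n}{2|z|}\mathbb{S}_n(z)$.

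What your route buys is a decoupling that is actually valid. The identity $\Prob(\cdot)=\mathbb{Q}(\cdot\mid B_k)$ in the proof of Proposition~\ref{propo:rejection} overlooks that conditioning on $B_k$ reweights the law of $Z_n$ by $\mathbb{Q}(B_k\mid Z_n)=\binom{n-1}{k}F_R(|Z_n|)^{n-k-1}\bigl(1-F_R(|Z_n|)\bigr)^{k}$, which is not constant. In fact no uniform constant is possible. Take $k=0$, $g=n\bigl(1-F_R(\max_{j<n}|y_j|)\bigr)$ and $A=(M,\infty)$: the left side of \eqref{eq:rejection_bound} tends to $(1+M)e^{-M}$, while the right side tends to $4e^{-M}$.

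The paper's argument is repaired by exactly the device in your last paragraph. Condition on $Y_{(n-k)}=z$ and note that $\mathbb{Q}(B_k(z))$ is bounded below uniformly for $|z|\in[an^{1/\beta},bn^{1/\beta}]$. Then use that this window has probability close to one by Proposition~\ref{propo:evt}, while $\liminf_n\PP(A_n)>0$.
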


    \begin{remark}
    \label{remark:rejection}
        Note that $Y_{(n-k)}$ is not independent of the rest of the dataset. And so, Theorem~\ref{theo:score_bound} does not directly follow from Lemma~\ref{lemma:score_bound} as described in Remark~\ref{remark:Zn}. However, it is possible to use the procedure described in Remark~\ref{remark:Zn} by defining $Z_n$ to have the same distribution as $Y_{(n-k)}$ and conditioning on the event that $Z_n$ is indeed the $(n-k)-$th order statistic of the dataset $\{Y_{1}, \dots, Y_{n-1}, Z_n\}$. See Section~\ref{sec:order_stats} for details.
    \end{remark}

    \begin{remark}
        Since \eqref{eq:result} is essentially a law of large numbers result, it is further possible to improve the error rate to show that $|S_n - \hat{S}_n|_{\infty} \sim n^{1/2 - 1/\beta}$. However, we do not pursue this generalization in this paper.
    \end{remark}

    The usefulness of Theorem~\ref{theo:score_bound} is that it allows one to replace the empirical score $S_n$ by something much simpler and much more amenable to analysis. We use Theorem~\ref{theo:score_bound} in the next section to prove the existence of a local maximum in the neighborhood of $Y_{(n-k)}$ (Theorem~\ref{theo:maxima}) and to measure its width (Theorem~\ref{theo:width}). 
    These are the two main ingredients used to derive the exit time of a micromode for ZZP in Theorem~\ref{theo:arrhenius_law}, whose proof is presented in Section~\ref{sec:exit_time_body}.

\section{Existence and width of a micromode}
\label{sec:micromode_technical} 
   Here we will rigorously prove Theorem~\ref{theo:maxima} and Theorem~\ref{theo:width}. Note that since $\pi^{(n)}$ is continuously differentiable, there exists a local maximum on the closed ball $\overline{B_{\sqrt{\nu}}(Y_{(n-k)})}$ for any $k \ge 0$. We will show that, as $n \to \infty$, such a point is unique and lies at a distance of order $n^{1 - 1/\beta}$ from $Y_{(n-k)}$. Moreover, we will show the width \eqref{eq:width} of the associated micromode is of the order $n^{1/\beta-1}$. We first state a generic result that will be used to show that there exists exactly one local maximum of $\pi^{(n)}$ in a given neighborhood. 

    \begin{lemma}
    \label{lemma:unique_min}
        Let $U:\mathbb{R}^d \to \mathbb{R}$ be twice-continuously differentiable in a neighborhood of the closed ball $\overline{B_r(y)}$ for some arbitrary $r > 0$ and $y \in \mathbb{R}^d$. Suppose
        \begin{enumerate}
            \item for all $x \in \partial B_r(y)$, $\nabla U(x)^T(x - y) > 0$.
            \item for all $x \in \overline{B_{r}(y)}$, $\nabla^2 U(x) \succ 0$.
        \end{enumerate} 
        Then, there exists a unique $x^{*} \in B_r(y)$ such that 
        \[
            U(x^*) = \min_{x \in \overline{B_r(y)}}U(x)
        \]
        and moreover, $\nabla U(x^*) = 0$. 
    \end{lemma}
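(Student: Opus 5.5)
Existence comes from compactness, location of the minimiser from condition 1, and uniqueness from strict convexity on the ball.

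\emph{Existence.} Since $U$ is continuous on the compact set $\overline{B_r(y)}$, it attains its minimum at some point $x^*\in\overline{B_r(y)}$.

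\emph{The minimiser is interior.} Suppose instead that $x^*\in\partial B_r(y)$. Consider the inward path $\gamma(s)=x^*-s(x^*-y)$ for small $s>0$; it stays inside the ball because $|\gamma(s)-y|=(1-s)r$. Differentiating gives
\[
\frac{d}{ds}U(\gamma(s))\Big|_{s=0}=-\nabla U(x^*)^T(x^*-y)<0
\]
by condition 1. So $U(\gamma(s))<U(x^*)$ for small $s$, which contradicts minimality. Hence $x^*\in B_r(y)$. Since $x^*$ is an interior minimiser of a differentiable function, $\nabla U(x^*)=0$.

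\emph{Uniqueness.} The ball $\overline{B_r(y)}$ is convex. For $x_1\neq x_2$ in it, set $\phi(t)=U(x_1+t(x_2-x_1))$ on $[0,1]$. Condition 2 gives
\[
\phi''(t)=(x_2-x_1)^T\nabla^2U\,(x_2-x_1)>0,
\]
so $\phi$ is strictly convex and $U$ is strictly convex on the ball. Now suppose two distinct minimisers $x_1,x_2$ exist with common value $m$. Strict convexity gives $U\big((x_1+x_2)/2\big)<m$, again a contradiction. So the minimiser is unique.

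The same argument also shows that $x^*$ is the only critical point in the ball. Indeed, $\nabla U$ is strictly monotone there: $(\nabla U(x_2)-\nabla U(x_1))^T(x_2-x_1)=\int_0^1\phi''(t)\,dt>0$, so $\nabla U$ can vanish at most once. This is the form of the conclusion used later when applying the lemma to $U=-\log\pi^{(n)}$ on $B_{\sqrt{\nu}}(Y_{(n-k)})$.

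No step is a real obstacle. The only point needing care is the boundary case: condition 1 is exactly what excludes boundary minimisers, via the one-sided directional-derivative argument above.
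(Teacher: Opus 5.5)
Your proof is correct and follows the same route as the paper. Compactness gives existence, the inward directional derivative along $x^*+t(y-x^*)$ together with condition 1 rules out a boundary minimiser, and strict convexity from $\nabla^2 U\succ 0$ gives uniqueness. You also spell out the uniqueness step (the midpoint argument, and strict monotonicity of $\nabla U$ showing there is at most one critical point in the ball), which the paper only asserts; the critical-point version is the form actually needed when the lemma is applied to $-\log\pi^{(n)}$.
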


    \begin{proof}
        Since $U$ is continuous in a neighborhood of the closed ball $\overline{B_r(y)}$, it attains a minimum on $\overline{B_{r}(y)}$. Let $x^* \in \overline{B_{r}(y)}$ be a minimizer of $U$. Suppose for the sake of contradiction that $x^* \in \partial B_{r}(y)$. Let $t \in (0, 1)$. Then, by assumption,
        \[
            U(x^* + t(y- x^*)) \ge  U(x^*)
        \]
        since $x^* + t(y - x^*) = (1-t)x^* + ty \in \overline{B_{r}(y)}$. Dividing by $t$ and taking $t \downarrow 0$ gives,
        \[
            \nabla U(x^*)^T(y - x^*) \ge 0 \iff \nabla U(x^*)^T(x^* - y) \le 0
        \]
        which is a contradiction. So $x^*$ must lie inside the ball $B_r(y)$. Since $x^*$ is an unconstrained minimizer of $U$ in the open ball $B_r(y)$, it must satisfy the first-order condition $\nabla U(x^*) = 0$. The uniqueness follows from the convexity assumption that $\nabla^2 U(x) \succ 0$ for all $x \in \overline{B_{r}(y)}$.
    \end{proof}

    Let $G_{k}$ be as in the statement of Proposition~\ref{propo:evt}. Let $0 < \epsilon, \delta < 1/2$ be arbitrary. For $n \ge 3$, define the events,
    \begin{equation}
    \label{eq:en_sets}
        \begin{gathered}
             A := \left\{\omega \in \Omega: G_k(\omega) > c_{\beta}\right\}, \quad  A'_n := \left\{\min_{j \neq n-k}|Y_{(j)} - Y_{(n-k)}| > |Y_{(n-k)}|/2n^{\epsilon}\right\}, \\   
            B_n := \left\{\sup_{x}|S_n(x) - \hat{S}_n(x)| < n^{1-1/\beta - \delta}\right\}, \quad 
            E_n := A'_n \cap B_n \cap A,  
        \end{gathered}      
    \end{equation}
    where $\hat{S}_n$ is as defined in \eqref{eq:snhat} and $c_{\beta}$ is given by \eqref{eq:cbeta}. To prove Theorem~\ref{theo:maxima}, we will verify the two conditions in Lemma~\ref{lemma:unique_min} for $U(x) = (-\log\pi^{(n)})(x)$. First, the strong convexity condition. Recall that $S'_n(x)$ is the Jacobian matrix corresponding to $S_n(x)$, i.e. $S'_n(x) = \sum_{j=1}^n S'(x, Y_j)$ where
    \[
        S'(x, y) := \frac{1}{\nu + |x - y|^2}I_d - \frac{2}{(\nu + |x -y|^2)^2}(x - y)(x - y)^T.
    \]

    \begin{lemma}
    \label{lemma:pd_property}
        Let $0 \le r < \sqrt{\nu}$ be arbitrary. There exists an $N$, depending on $r$, large enough such that for all $n > N$, on the event $E_n$, $S'_n(x) \succ 0 \ \forall \ x \in \overline{B_{r}(Y_{(n-k)})}$.
    \end{lemma}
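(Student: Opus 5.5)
We split $S'_n(x)$ into the contribution of $Y_{(n-k)}$ and that of all other observations:
\[
S'_n(x) = S'(x, Y_{(n-k)}) + \sum_{j \ne n-k} S'(x, Y_{(j)}) .
\]
The first term supplies a positive definite matrix uniformly on $\overline{B_r(Y_{(n-k)})}$. The second is a perturbation whose operator norm vanishes on $E_n$.

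\emph{Step 1: the dominant term.} For $u = x - Y_{(n-k)}$ with $|u| \le r < \sqrt{\nu}$, the eigenvalues of $S'(x, Y_{(n-k)})$ are $1/(\nu + |u|^2)$, with multiplicity $d-1$ for directions orthogonal to $u$, and $(\nu - |u|^2)/(\nu + |u|^2)^2$ in the direction $u$. Both are bounded below by
\[
\lambda_r := \frac{\nu - r^2}{(\nu + r^2)^2} > 0,
\]
uniformly over the closed ball. This is the only place where $r < \sqrt{\nu}$ is used, and it explains why $N$ depends on $r$.

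\emph{Step 2: the remainder.} For any $y$ with $|x - y| = s$, the matrix $S'(x,y)$ has eigenvalues $1/(\nu + s^2)$ and $(\nu - s^2)/(\nu + s^2)^2$. Hence $\|S'(x,y)\|_{op} \le 1/(\nu + s^2) \le 1/s^2$.

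On $A'_n \subset E_n$, every $j \ne n-k$ satisfies $|Y_{(j)} - Y_{(n-k)}| > |Y_{(n-k)}|/(2n^{\epsilon})$. For $x \in \overline{B_r(Y_{(n-k)})}$ this gives
\[
|x - Y_{(j)}| \ge \frac{|Y_{(n-k)}|}{2n^{\epsilon}} - \sqrt{\nu}.
\]
We need $|Y_{(n-k)}|$ large, and we get it from $E_n$ directly. On $B_n$, evaluate at $x$ just outside $\mathcal{F}_n$ near $Y_{(n-k)}$, or argue instead via the inclusion $E_n \subset A_n$ that the theorem asserts for large $n$. Either way $|Y_{(n-k)}| > 2n\sqrt{\nu}$, and $n^{1-\epsilon} \gg 1$ since $\epsilon < 1/2$. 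Therefore $|x - Y_{(j)}| \ge c\,|Y_{(n-k)}|/n^{\epsilon}$ for large $n$, and
\[
\Big\| \sum_{j \ne n-k} S'(x, Y_{(j)}) \Big\|_{op} \le \frac{(n-1)\, 4 n^{2\epsilon}}{c^2\, |Y_{(n-k)}|^2} \le C\, \frac{n^{1+2\epsilon}}{n^2 \nu} = C\, n^{2\epsilon - 1} \to 0 .
\]
This is a deterministic bound on $E_n$, so there are no probabilistic issues.

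\emph{Step 3: conclusion.} Pick $N$ such that $C n^{2\epsilon - 1} < \lambda_r$ for $n > N$. Weyl's inequality then gives $S'_n(x) \succeq (\lambda_r - C n^{2\epsilon-1}) I_d \succ 0$ on the ball.

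\emph{Main obstacle.} The delicate point is extracting the lower bound $|Y_{(n-k)}| \gtrsim n$ purely from $E_n$. The events $A$ and $B_n$ only control $G_k$ and the score. If we must derive it from $B_n$, the plan is as follows. Suppose $|Y_{(n-k)}| \le 2n\sqrt{\nu}$. Near the boundary of $\mathcal{F}_n$ the approximant $\hat S_n$ jumps by roughly $(n-1)x/|x|^2 - \sum_{j \ne n-k} S(x, Y_{(j)})$. The aim is to show that this jump has size at least $n^{1-1/\beta-\delta}$, which would contradict $B_n$ because $S_n$ is continuous.

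The fallback is the almost-sure Skorokhod coupling: $|Y_{(n-k)}|/n^{1/\beta} \to G_k^{-1/\beta}A^{1/\beta}$, and together with $G_k > c_\beta$ on $A$ this gives $|Y_{(n-k)}| \ge c' n^{1/\beta} \ge c' n$ for large $n$. Since $1/\beta \ge 1$, the bound $\|\cdot\|_{op} \le C n^{1+2\epsilon - 2/\beta}$ is even better when $\beta < 1$. In the case $\beta = 1$ the constant requires $G_k > 2\sqrt{\nu}$, which $A$ supplies.
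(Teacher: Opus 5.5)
Your proof is correct and is essentially the paper's: both isolate the $Y_{(n-k)}$ term, which is bounded below by $(\nu-r^2)/(\nu+r^2)^2$, and control the other $n-1$ terms using the separation in $A'_n$ together with the growth $|Y_{(n-k)}|\sim n^{1/\beta}$ from the almost-sure version of Proposition~\ref{propo:evt}. The only difference is that the paper lower-bounds each quadratic form $a^TS'(x,Y_{(j)})a$ via monotonicity of $s\mapsto(\nu-s^2)/(\nu+s^2)^2$ on $(\sqrt{3\nu},\infty)$, whereas you use operator norms and Weyl's inequality.

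Your ``fallback'' is therefore the real argument. $B_n$ alone cannot force $|Y_{(n-k)}|>2n\sqrt{\nu}$, and quoting $E_n\subset A_n$ from Theorem~\ref{theo:maxima} just reuses the same coupling (and that theorem's proof relies on this lemma). So the bound is not deterministic on $E_n$, and $N$ depends on $\omega$, exactly as in the paper. Also, for $\beta=1$ you do not need $G_k>2\sqrt{\nu}$: any bound $|Y_{(n-k)}|\ge c'n$ with $c'>0$ already gives $n^{2\epsilon-1}\to 0$.
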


    \begin{proof}
        For arbitrary $a \in \mathbb{R}^d$ with $|a| = 1$, consider
        \begin{align*}
            a^TS'(x,y)a &= \frac{1}{\nu + |x - y|^2} - \frac{2|a\cdot (x - y)|^2}{(\nu + |x -y|^2)^2} \ge  \frac{(\nu - |x-y|^2)}{(\nu + |x-y|^2)^2}.
        \end{align*}
    
        On $E_n \subseteq A'_n$, $\min_{j\neq n-k} |Y_{(j)} - Y_{(n-k)}| > |Y_{(n-k)}|/2n^{\epsilon}$. Thus for all $x \in \overline{B_{r}(Y_{(n-k)})}$, it follows that on the event $A'_n$, $\min_{j \neq n-k}|x - Y_{(j)}| \ge |Y_{(n-k)}|/2n^{\epsilon} - \sqrt{\nu} =: \delta_n$. Since $|Y_{(n-k)}|/2n^{\epsilon}$ increases to infinity as $n$ becomes large, $\delta_n \to \infty$ as $n \to \infty$. Note that, the function $s \mapsto (\nu - s^2)/(\nu + s^2)^2$ is increasing on $(\sqrt{3\nu}, \infty)$. Thus, for big enough $n$ such that $\delta_n > \sqrt{3\nu}$, we get for all $j \neq (n-k)$,
        \[
            a^TS'(x, Y_{(j)})a \ge \frac{(\nu - \delta_n^2)}{(\nu + \delta_n^2)^2}, \quad x \in \overline{B_{r}(Y_{(n-k)})}.
        \]
        On the other hand for $j = (n-k)$, 
        \[
            a^TS'(x, Y_{(n-k)})a \ge \frac{\nu - r^2}{(\nu + r^2)^2}.
        \]
        Combining the two, we get, on the event $A'_n$ and for all $x \in \overline{B_{r}(Y_{(n-k)})}$,
        \[
            a^TS'_n(x)a \ge \frac{(n-1)(\nu - \delta_n^2)}{(\nu + \delta_n^2)^2} + \frac{\nu - r^2}{(\nu + r^2)^2} \to \frac{\nu - r^2}{(\nu + r^2)^2}
        \]
        as $n \to \infty$. To see this, observe that the first term scales as $ -n/\delta_n^2$ where $\delta_n \sim |Y_{(n-k)}|/n^{\epsilon} \sim n^{1/\beta - \epsilon}$ by Proposition \ref{propo:evt}. And so, $n/\delta_n^2 \to 0$ since $0 < \beta \le 1$ and $0 < \epsilon < 1/2$. Since $r < \sqrt{\nu}$, the above implies that there exists an $N$ such that for all $n \ge N$, on the event $A'_n$, we have $a^TS'_n(x)a > 0$ for all $x \in \overline{B_r(Y_{(n-k)})}$ and all $a$ with $|a| = 1$. This concludes the proof.
    \end{proof}

    The above lemma implies that if a local maximum of $\pi^{(n)}$ exists in the ball $B_{\sqrt{\nu}}(Y_{(n-k)})$, then it is unique and satisfies $S_n(x) = 0$. It remains to show that the local maximum exists close to $Y_{(n-k)}$. On the event $A$, let $m_1, m_2$ be such that $0 \le c_{\beta} < m_1(\omega) < m_2(\omega) < G_k(\omega)$ for each $\omega \in A$. Define,
    \begin{equation}
    \label{eq:x_n_dash}
        d_n := \frac{n^{1-1/\beta}}{m_1}; \quad d_n^{\pm} := \frac{1\mp\sqrt{1-4d_n^2\nu}}{2d_n}; \quad n \ge 3.
    \end{equation}

    \begin{figure}
        \centering
        \includegraphics[scale=0.38]{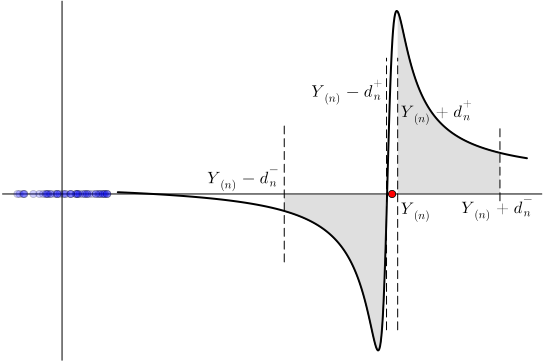}
        \includegraphics[scale=0.35]{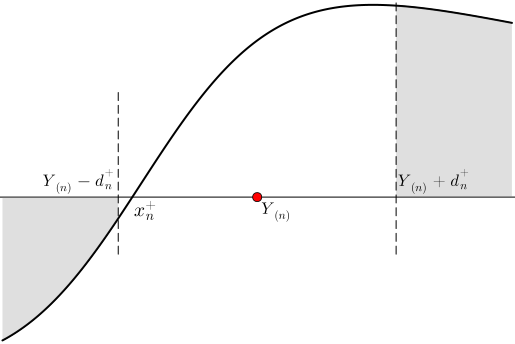}
        \caption{A diagram showing $S_n(x)$ and the construction of $d_n^{+}$ and $d_n^{-}$ in one dimension. The lengths $d_n^{\pm}$ are chosen such that $vS_n(Y_{(n)} + vt) > 0$ for all $t \in [d_n^{+}, d_n^{-}]$ and $v = \pm 1$. The right panel zooms in on the region where $S_n$ crosses the $x-$axis.}
        \label{fig:dn_figure}
    \end{figure}
    Note that $d_n^{\pm}$ are not constants but depend on the value of $G_k$ through $m_1$. From \eqref{eq:x_n_dash}, observe that $d_n^{\pm}$ are real if any only if $4d^2_n\nu < 1$. For $\beta = 1$, $d_n = 1/m_1 < 1/c_{\beta} = 1/(2\sqrt{\nu})$ by the choice of $m_1$ and definition of $c_{\beta}$. For $\beta < 1$, $d_n \to 0$ as $n \to \infty$. And so, there exists $N_1$ such that for all $n > N_1$, $d_n < 1/(2\sqrt{\nu})$ and $d_n^{\pm}$ are real. Additionally, on the event $A$, $n^{-1/\beta}|Y_{(n-k)}| \to G_k > m_2$. And so, there exists $N_2$ such that for all $n > N_2$, $n^{-1/\beta}|Y_{(n-k)}| > m_2$. This implies for all $n > \max\{N_1, N_2\}$,
    \[
        d_n^{+} \le 2d_n\nu < \sqrt{\nu} < \frac{\sqrt{\nu}}{2d_n} < d_n^{-} < \frac{1}{d_n} < \frac{m_2}{n^{1-1/\beta}} < \frac{|Y_{(n-k)}|}{n}.
    \]
    See Figure~\ref{fig:dn_figure} for an illustration in one dimension.
    
    \begin{lemma}
    \label{lemma:outward_grad}
       For large enough $n$, on the event $E_n$,
       \[
            \inf \left\{v^T S_n(Y_{(n-k)} + vt): t \in [d_n^{+}, d_n^{-}], v \in \mathbb{S}^{d-1} \right\} > 0.
       \]
    \end{lemma}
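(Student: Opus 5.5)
Write $Y := Y_{(n-k)}$ and $x = Y + vt$ with $v \in \mathbb{S}^{d-1}$ and $t \in [d_n^{+}, d_n^{-}]$. The plan is to replace $S_n$ by the surrogate $\hat S_n$ of \eqref{eq:snhat}, at a cost controlled by the event $B_n$, and then show that $v^T\hat S_n(x)$ is bounded below by a positive multiple of $n^{1-1/\beta}$ on the whole annulus. The annulus lies inside $\mathcal{F}_n$, because $d_n^{-} < |Y|/n < 2|Y|/n$ by the displayed chain of inequalities preceding the lemma. So on $\mathcal{F}_n$,
\[
v^T\hat S_n(x) = \frac{(n-1)\, v^Tx}{|x|^2} + \frac{t}{\nu + t^2}.
\]
On $B_n \subset E_n$ we also have $v^TS_n(x) \ge v^T\hat S_n(x) - n^{1-1/\beta-\delta}$.

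\textbf{Second term.} The quadratic $t/(\nu+t^2) \ge d_n$ is equivalent to $d_nt^2 - t + d_n\nu \le 0$. Its roots are exactly $d_n^{\pm}$, so by construction $t/(\nu+t^2) \ge d_n = n^{1-1/\beta}/m_1$ on $[d_n^{+}, d_n^{-}]$.

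\textbf{First term.} This term can be negative when $v$ points toward the origin, and controlling it is the main step. Since $|v^Tx| \le |x|$, we get $(n-1)v^Tx/|x|^2 \ge -(n-1)/|x|$. Also $|x| \ge |Y| - t \ge |Y| - d_n^{-} \ge |Y|(1-1/n)$, hence $(n-1)/|x| \le n/|Y|$. On $A$, for $n > N_2$, we have $|Y| > m_2 n^{1/\beta}$, so $n/|Y| < n^{1-1/\beta}/m_2$. Combining the two terms,
\[
v^T\hat S_n(x) \ge n^{1-1/\beta}\left(\frac{1}{m_1} - \frac{1}{m_2}\right),
\]
which is strictly positive because $m_1 < m_2$.

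\textbf{Conclusion.} Subtracting the $B_n$ error gives
\[
v^TS_n(x) \ge n^{1-1/\beta}\left(\frac{1}{m_1} - \frac{1}{m_2} - n^{-\delta}\right).
\]
The bracket is positive for large $n$, uniformly in $t$ and $v$, so the infimum is positive. "Large enough" here also absorbs $N_1$ and $N_2$.

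The delicate point is making the lower bound uniform over $v$. This works only because the strict gap $m_1 < m_2 < G_k$ is built into $d_n$: the quadratic gives at least $n^{1-1/\beta}/m_1$, while the bulk pull is at most $n^{1-1/\beta}/m_2$. I also need to confirm that the annulus lies inside $\mathcal{F}_n$, so that the replacement of $S_n$ by $\hat S_n$ is exact up to the $B_n$ error. That follows from $d_n^{-} < |Y|/n$.

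Note that $m_1$ and $m_2$ depend on $\omega$, so "large enough $n$" is pathwise. This is consistent with how $N_1$ and $N_2$ were introduced before the lemma.
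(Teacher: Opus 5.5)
Your proof is correct and follows the paper's argument: pass from $S_n$ to $\hat S_n$ using $B_n$ (the annulus lies in $\mathcal{F}_n$ because $d_n^{-}<|Y_{(n-k)}|/n$), get $t/(\nu+t^2)\ge d_n$ from the quadratic, bound the bulk term below by $-n^{1-1/\beta}/m_2$, and use the gap $m_1<m_2$. The one difference is that you bound the bulk term by Cauchy--Schwarz, $v^Tx\ge -|x|$, where the paper uses monotonicity of $f(\phi)$ in $\phi=v^TY_{(n-k)}$; both give the same sharp bound $-(n-1)/(|Y_{(n-k)}|-t)$, and yours is simply shorter.
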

    \begin{proof}
        Suppose $n > \max\{N_1, N_2\}$, then $d_n^{-} \le |Y_{(n-k)}|/n$ and we have for all $v \in \mathbb{S}^{d-1}$ and $t \in [d_n^{+}, d_n^{-}]$,
        \begin{align*}
            v^T S_n(Y_{(n-k)} + vt) &> v^T \hat{S}_n(Y_{(n-k)} + vt) - n^{1-1/\beta-\delta} \\
            &= \frac{(n-1)}{|Y_{(n-k)} + vt|^2} v^T(Y_{(n-k)} + vt) + \frac{t}{\nu + t^2} - n^{1-1/\beta -\delta}.
        \end{align*}
        on the event $E_n \subseteq B_n$. Let $t \in [d_n^{+}, d_n^{-}]$ be fixed. For arbitrary $v \in \mathbb{S}^{d-1}$, let $\phi = v^TY_{(n-k)}$. Then, $\phi \in [-|Y_{(n-k)}|, |Y_{(n-k)}|]$. Consider the function,
        \[
            f(\phi) = (n-1)\frac{\phi + t}{|Y_{(n-k)}|^2 + t^2 + 2\phi t} \implies f'(\phi) = (n-1)\frac{|Y_{(n-k)}|^2 - t^2}{(|Y_{(n-k)}|^2 + t^2 + 2\phi t)^2}.
        \]
        Since $t < d_n^{-} < |Y_{(n-k)}|$, the function $f$ is monotonically increasing in $\phi$. Thus by setting $\phi = -|Y_{(n-k)}|$, we have,
        \[
            v^T S_n(Y_{(n-k)} + vt) \ge -\frac{n-1}{|Y_{(n-k)}| - t} + \frac{t}{\nu + t^2} - n^{1-1/\beta -\delta},
        \]
        for all $v \in \mathbb{S}^{d-1}$. The first term is decreasing in $t$. Moreover, by construction, $t/(\nu + t^2) > d_n$ for all $t \in [d_n^{+}, d_n^{-}]$. Thus,
        \[
            v^T S_n(Y_{(n-k)} + vt) \ge -\frac{n-1}{|Y_{(n-k)}| - d_{n}^{-}} + d_n - n^{1-1/\beta -\delta} \ge \frac{-n}{|Y_{(n-k)}|} + d_n - n^{-\delta}m_1d_n,
        \]
        since $d_n^{-} < |Y_{(n-k)}|/n $. But also, $|Y_{(n-k)}|/n > m_2/n^{1-1/\beta}$ which gives,
        \[
            v^T S_n(Y_{(n-k)} + vt) \ge n^{1-1/\beta}\left(\frac{-1}{m_2} +  \frac{1}{m_1} - n^{-\delta}\right) > 0
        \]
        for $n$ large enough.
    \end{proof}
  We are now ready to prove both Theorem~\ref{theo:maxima} and  Theorem~\ref{theo:width}.
    \begin{proof}[Proof of Theorem~\ref{theo:maxima}]
        Let $r = \sqrt{\nu}/2$ if $0 < \beta < 1$ and $r = 2\nu/m_1$ if $\beta = 1$. Then for $n > \max\{N_1, N_2\}$, $d_n^{+} < r < \sqrt{\nu}$ for any $\beta \le 1$ on the event $E_n$. Let $N > \max\{N_1, N_2\}$ be such that for all $n > N$, the statements of Lemma~\ref{lemma:pd_property} and Lemma~\ref{lemma:outward_grad} both hold. Then for $n > N$, on the event $E_n$, it follows that $S'_n(x) \succ 0$ in the closed ball $\overline{B_{r}(Y_{(n-k)})}$ and  $v^T(S_n(Y_{(n-k)} + vt)) > 0$ for all $v \in \mathbb{S}^{d-1}$ and $t \in [d_n^{+}, d_n^{-}]$. By Lemma~\ref{lemma:unique_min}, this implies that for all $n > N$, on the event $E_n$, there exists a unique $x_n^{+} \in B_{d_n^{+}}(Y_{(n-k)})$ such that $S_n(x_n^{+}) = 0$ and $S'_n(x_n^{+}) \succ 0$. In addition, since $d_n^{-} > \sqrt{\nu}$, this further means that there cannot be any other root of $S_n(x)$ on the annulus $\overline{B_{\sqrt{\nu}}(Y_{(n-k)})}\setminus B_{d_n^{+}}(Y_{(n-k)})$. Thus, there exists a unique local maximum $x_n^{+}$ in the $\sqrt{\nu}-$nbd. of $Y_{(n-k)}$. Further, by design,
        \[
            |Y_{(n-k)} - x_n^{+}| \le d_n^{+} < 2d_n\nu = \frac{2\nu}{m_1}n^{1-1/\beta}.
        \]
        Choosing $m_1 = (G_k + c_{\beta})/2$ gives the first part.

        For the second part of the theorem, consider the sequence $A_n = \{|Y_{(n-k)}| > 2n\sqrt{\nu}\}$. Observe that $A$ is eventually contained in $A_n$ i.e. $A \subset \liminf_{n \to \infty} A_n$. Thus, for large $n$, $E_n \subseteq A \subset A_n$ and $\PP(E_n \mid A_n) = \PP(E_n)/\PP(A_n)$. But also since $G_k$ is continuous, $\lim_{n \to \infty} \PP(A_n) = \PP(A)$. From Theorem~\ref{theo:score_bound}, it follows that $\PP(B_n | A_n) \to 1$ as $n \to \infty$. This gives $\PP(A \cap B_n) \to \PP(A)$ as $n \to \infty$. Finally, $\PP(A'_n) \to 1$ by Proposition~\ref{propo:evt}. This implies $\PP(A'_n \cap B_n \cap A) = \PP(E_n)\to \PP(A)$ and hence, $\PP(E_n \mid A_n) \to 1$.
    \end{proof}
    \begin{proof}[Proof of Theorem~\ref{theo:width}] 
        Define
        \[
            t_0 = \sup\{t > 0: |x_n^{+} + vt - Y_{(n-k)}| < d_n^{-} \ \forall\ v\in \mathbb{S}^{d-1}\}.
        \]
        Let $t < t_0$. If $|x_n^{+} + vt - Y_{(n-k)}| > d_n^{+}$, then $v^TS_n(x_n^{+} + vt) > 0$ by Lemma~\ref{lemma:outward_grad}. Suppose $|x_n^{+} + vt - Y_{(n-k)}| < d_n^{+} < \sqrt{\nu}$. A first-order Taylor's expansion gives for some $0 < r < t < \sqrt{\nu}$
        \[
            v^TS_n(x_n^{+} + vt) = v^TS_n(x_n^{+}) + v^TS'_n(x_n^{+} + rv)v = v^TS'_n(x_n^{+} + rv)v.
        \]
        Since $|x_n^{+} - Y_{(n-k)}| < d_n^{+}$, then the line joining $x_n^{+}$ and $x_n^{+} + vt$ also lies entirely inside the $d_n^{+}-$ball of $Y_{(n-k)}$. And so, the right-hand side above is positive by Lemma~\ref{lemma:pd_property} above. This implies that 
        \[
            W_n \ge t_0 \ge d_{n}^{-} - |x_n^{+} - Y_{(n-k)}| > d_n^{-} - d_n^{+} = \frac{\sqrt{1 - 4d_n^2\nu}}{d_n} > \frac{1}{d_n} - 2\sqrt{\nu}.
        \]
        The lower bound follows by putting $d_n$ from \eqref{eq:x_n_dash} with $m_1 = (G_k+c_{\beta})/2$. 

        For the upper bound, let $r = |Y_{(n-k)}|$, $v = -Y_{(n-k)}/r$ and $t = 2r/n $. Consider,
        \[
            v^T\hat{S}_n(Y_{(n-k)} + vt) = \frac{2r/n}{\nu + (2r/n)^2} - \frac{n-1}{r-2r/n} < \frac{-n}{r}\left(\frac{1}{2} + \frac{1}{n-2}\right).   
        \]
        And so, on the event $E_n$, $v^TS_n(Y_{(n-k)} + vt) \le v^T\hat{S}_n(Y_{(n-k)} + vt) + n^{1-1/\beta - \delta} < 0$ for $n$ large enough. This implies that,
        \[
            W_n < |Y_{(n-k)} + vt - x_n^{+}| \le |Y_{(n-k)} - x_n^{+}| + 2|Y_{(n-k)}|/n.
        \]
        Since $n^{-1/\beta}|Y_{(n-k)}| \to G_k > c_{\beta}$, there exists $N$ such that for $n > N$, $|Y_{(n-k)}|/n < 2G_k n^{1/\beta-1}$. And so, the result follows.
    \end{proof}

\section{Exit time from a micromode}
\label{sec:exit_time_body}
    In this section, we study the time taken by ZZP to exit a micromode in one dimension and prove Theorem~\ref{theo:arrhenius_law}. 

    For $n \ge 4$, let $Z^n_t = (X^n_t, V^n_t)$ be either canonical ZZP or ZZP with subsampling  with switching rates $\lambda_n$ defined by \eqref{eq:switching_rates}. Suppose $n$ is large enough such that on the event $E_n$, both Theorem~\ref{theo:maxima} and Theorem~\ref{theo:width} are in force. Given the initial condition $(X^n_0, V^n_0) = (x_n^{+}, -1)$, we are interested in the exit time of $Z^n_t$ defined in \eqref{eq:tau_n}. Define $x_n^{-} := x_n^{+} - W_n$. Then, 
    \[
        \tau_n = \inf\{t \ge 0: X^n_t \le x_n^{-} \mid (X^n_0, V^n_0) = (x_n^{+}, -1)\}.
    \]
    
    Following the approach of \cite{monmarche_piecewise_2016}, we study the exit times of $Z^n_t$ using standard renewal-time arguments. Towards that, define, 
    \begin{equation}
    \label{eq:defns}
        \begin{gathered}
            \eta_n = \inf\{t > 0: X^n_t \in \{x_n^{-}, x_n^{+}\}\}; \quad p_{\tau, n} = \PP(X^n_{\eta_n} = x_n^{-}). \\
            T_n(y, v) = \inf\{t > 0: (X^n_t, V^n_t) = (x_n^{+}, -1) \mid (X^n_0, V^n_0) = (y, v)\}
        \end{gathered}
    \end{equation}
    Starting at $(x_n^{+}, -1)$, $\eta_n$ is the first time the ZZP either exits the micromode or rises back to the local maximum $x_n^{+}$. By construction, $Z^n_{\eta_n} \in \{(x_n^{-}, -1), (x_n^{+}, +1)\}$ with probability $1$. Given the initial condition, if $Z^n_{\eta_n} = (x_n^{-}, -1)$, then $\tau_n = \eta_n$ and the process exits the micromode on the left. Conversely, if $Z^n_{\eta_n} = (x_n^{+}, +1)$, the ZZP takes another $T_n(x_n^{+}, +1)$ time to return to the initial condition $(x_n^{+}, -1)$ and begin the next excursion. Since the ZZP is ergodic \cite[see e.g., ][]{bierkens_ergodicity_2019} with $\PP-$probability $1$
    , it follows that $\PP(T_n(x_n^{+}, +1) < \infty) = 1$. We obtain the following recursion for the exit time
    \[
        \tau_n = \begin{cases}
            \eta_n & \text{with prob. } p_{\tau, n} \\
            \eta_n + T'_n(x_n^{+}, +1) + \tau'_n & \text{with prob. } 1 - p_{\tau, n}.
        \end{cases} 
    \]
    where $T'_n(x_n^{+}, +1)$ and $\tau'_n$ are iid copies of $T_n(x_n^{+}, +1)$ and $\tau_n$ respectively.
    So, on the event $E_n$,
    \begin{equation}
    \label{eq:expected_tau}
        \EE(\tau_n) = \frac{1}{p_{\tau, n}} \EE(\eta_n) + \frac{1 - p_{\tau, n}}{p_{\tau, n}}\,\EE(T_n(x_n^{+}, +1)).
    \end{equation}
    
    In \eqref{eq:expected_tau}, $1/p_{\tau, n}$ represents the expected number of excursions the ZZP makes to successfully exit the micromode on the left. Next, we will show that this is the leading order term. Consider the switching rates \eqref{eq:switching_rates}. Observe that, the rate
    \[
        \lambda_n(x, v) = (\nu + 1)(v \cdot S_n(x))_{+} + \gamma_n(x), \quad (x, v) \in \mathbb{R} \times \{-1, +1\},
    \]
   corresponds to the canonical ZZP rate for $\gamma_n \equiv 0$ and the subsampling ZZP rate for
    \begin{equation}
    \label{eq:excess_rate}
        \gamma_n(x) = (\nu + 1)\left(\sum_{j=1}^n(S(x, Y_j))_{+} - \left(S_n(x)\right)_{+}\right), \quad x \in \mathbb{R}.
    \end{equation} 
     Recall from Section~\ref{sec:ek-formula} that $S_n(x) \ge 0$ for all $x \ge x_n^{+}$. Moreover from \eqref{eq:empirical_score}, $S_n(x) \ge 0$ for all $x\ge Y_{(n)}$. And so, it must be that $x_n^{+} < Y_{(n)}$. Also, by definition of $W_n$ in \eqref{eq:width}, it follows that $S_n(x) \le 0$ for all $x \in (x_n^{-}, x_n^{+})$. Finally, on the set $E_n$, $|Y_{(n)} - Y_{(j)}| > |Y_n|/2n^{\epsilon} > W_n$ for large $n$. Thus, $x_n^{-} > |Y_{(j)}|$ for all $j \le n-1$ so that $S(x, Y_{(j)}) > 0$. We get for large enough $n$,
    \begin{align}
    \label{eq:excess_cases}
        \frac{\gamma_n(x)}{\nu + 1} &= \sum_{j=1}^n(S(x, Y_j))_{+} - \left(S_n(x)\right)_{+} = \begin{cases}
            S_n(x) - S(x, Y_{(n)}), & x_n^{-} \le x < x_n^{+}, \\
            -S(x, Y_{(n)}), & x_n^{+} \le x < Y_{(n)}, \\
            0, & x \ge Y_{(n)}.
        \end{cases}
    \end{align}
    In any case, $\gamma_n(x)/(\nu + 1) \le S_n(x) - S(x, Y_{(n)})$ for $x \in (x_n^{-}, Y_{(n)})$. For $x \in (x_n^{-}, Y_{(n)})$, on the event $E_n$ in \eqref{eq:en_sets},
    \begin{align}
    \label{eq:gamma_bound}
        \frac{\gamma_n(x)}{\nu + 1} \le S_n(x) - S(x, Y_{(n)}) &\le \hat{S}_n(x) + n^{1-1/\beta-\delta} - S(x, Y_{(n)}) =\frac{n-1}{x} + n^{1-1/\beta-\delta}.
    \end{align}
    The first term on the right-hand side is decreasing in $x$ and we have, $x > x_n^{-} = Y_{(n)} - (Y_{(n)} - x_n^{+}) - W_n > Y_{(n)} - 2\sqrt{\nu} - 4G_kn^{1/\beta-1}$. But also from the proof of Theorem~\ref{theo:maxima}, $n$ is such that $|Y_{(n)}| > m_1n^{1/\beta}$. This gives,
    \[
        \frac{\gamma_n(x)}{\nu + 1} \le n^{1-1/\beta}\left(\frac{1}{m_1 - 2\sqrt{\nu}n^{-1/\beta} - 4G_kn^{-1}} - n^{-\delta}\right) \le \frac{n^{1-1/\beta}}{2m_1}
    \]
    for large $n$. Define $\overline{\gamma_n} := n^{1-1/\beta}/(G_k + c_{\beta})$. Then for large $n$, $\gamma_n(x) \le (\nu + 1)\overline{\gamma_n}$ for all $x \in (x_n^{-}, x_n^{+})$.
    Moreover, from Theorem~\ref{theo:width},
    \begin{align}
    \label{eq:w_times_gamma}
        \frac{1}{2} - \frac{2\sqrt{\nu} n^{1-1/\beta}}{G_k+c_{\beta}} \le W_n\overline{\gamma_n} \le \frac{4G_k}{G_k + c_{\beta}} +  \frac{\sqrt{\nu}n^{1-1/\beta}}{G_k + c_{\beta}} \le 4 +  \frac{\sqrt{\nu}n^{1-1/\beta}}{G_k + c_{\beta}}.
    \end{align}
    Let $p_n := \pi^{(n)}(x_n^{-})/\pi^{(n)}(x_n^{+})$. By Proposition 17 of \cite{monmarche_piecewise_2016},
        \[
           \frac{p_n}{1 + \overline{\gamma_n}W_n} \le  p_{\tau, n} \le p_n.
        \]
    Combining this with \eqref{eq:w_times_gamma} implies that $p_{\tau, n}$ is of the same order of magnitude as $p_n$. We have the following result.
    
    \begin{lemma}
    \label{lem:pn_magnitude}
        Suppose either $\gamma_n(x) \equiv 0$ or defined by \eqref{eq:excess_rate}. As $n \to \infty$,
        \[
            p_{\tau, n} \mid E_n = O_{p}\left(n^{(1-1/\beta)(\nu + 1)}\right).
        \]  
    \end{lemma}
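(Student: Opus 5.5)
Since $\frac{p_n}{1+\overline{\gamma_n}W_n}\le p_{\tau,n}\le p_n$, and by \eqref{eq:w_times_gamma} the factor $1+\overline{\gamma_n}W_n$ is bounded above by $5+o(1)$ on $E_n$ for large $n$, it suffices to show that $p_n=\pi^{(n)}(x_n^-)/\pi^{(n)}(x_n^+)$ is of exact order $n^{(1-1/\beta)(\nu+1)}$ in probability on $E_n$. That is, we want bounds $c\,n^{(1-1/\beta)(\nu+1)}\le p_n\le C\,n^{(1-1/\beta)(\nu+1)}$, with $c,C$ depending only on $G_k,\nu$. The plan is to write
\[
\log p_n=-(\nu+1)\int_{x_n^-}^{x_n^+}(-S_n(x))\,dx ,
\]
using $-\partial\log\pi^{(n)}=(\nu+1)S_n$ in $d=1$. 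We then replace $S_n$ by $\hat S_n$ where possible. Note that $\mathcal F_n$ has radius $2|Y_{(n)}|/n\asymp n^{1/\beta-1}$, which by Theorem~\ref{theo:width} is comparable to $W_n$; the upper bound $4G_kn^{1/\beta-1}+\sqrt\nu$ may even exceed it.

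Next we split the integral into two pieces. On the part inside $\mathcal F_n$, which contains $[x_n^+-2|Y_{(n)}|/n+\sqrt\nu,\,x_n^+]$, we use $\hat S_n(x)=(n-1)/x+S(x,Y_{(n)})$. The error is at most $n^{1-1/\beta-\delta}$ on $B_n$, and integrating it over a length $O(n^{1/\beta-1})$ gives $O(n^{-\delta})=o(1)$. The explicit part integrates exactly. The term $\int S(x,Y_{(n)})\,dx=\tfrac12\log(\nu+(x-Y_{(n)})^2)$ yields $\tfrac12\log\frac{\nu+(x_n^--Y_{(n)})^2}{\nu+(x_n^+-Y_{(n)})^2}$. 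Here $|x_n^+-Y_{(n)}|=O(n^{1-1/\beta})=O(1)$, and $|x_n^--Y_{(n)}|\asymp W_n\asymp n^{1/\beta-1}$ by Theorem~\ref{theo:width} (its lower bound is positive of order $n^{1/\beta-1}$ on $A$). After multiplying by $\nu+1$, this piece contributes $-(\nu+1)(1/\beta-1)\log n+O_p(1)$ to $\log p_n$. The drift term $(n-1)\log(x_n^+/x_n^-)$ is $\le (n-1)\,W_n/x_n^-=O_p(1)$, since $x_n^-\gtrsim m_1n^{1/\beta}$ and $W_n=O(n^{1/\beta-1})$.

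The main obstacle is the part of $[x_n^-,x_n^+]$ lying outside $\mathcal F_n$, if $W_n$ exceeds $2|Y_{(n)}|/n$. There I would instead use the lower and upper comparisons directly. On the interval, $-S_n\ge0$ by the definition of $W_n$, which gives monotonicity. For an upper bound on $-S_n$ I would use $-S_n(x)\le -S(x,Y_{(n)})$, because the other summands are positive when $x>Y_{(j)}$ (shown above on $E_n$). That bound integrates to the same logarithmic expression. For the reverse direction, restricting the integral to the inner piece already gives a lower bound on $\log p_n$... with the correct signs sorted out, both bounds follow. Hence $\log p_n=(\nu+1)(1-1/\beta)\log n+O_p(1)$ on $E_n$, where the $O_p(1)$ is controlled by $G_k,m_1$ and $\nu$. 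Combining this with the sandwich inequality for $p_{\tau,n}$ completes the argument, for both $\gamma_n\equiv0$ and $\gamma_n$ from \eqref{eq:excess_rate}.
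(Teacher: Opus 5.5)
Your argument is correct and close to the paper's. Both use the sandwich $p_n/(1+\overline{\gamma_n}W_n)\le p_{\tau,n}\le p_n$ with \eqref{eq:w_times_gamma}; for $\beta=1$ the factor is only bounded, not $5+o(1)$, but that suffices. Both then evaluate $\int\hat S_n$ in closed form: the drift factor $(x_n^+/x_n^-)^{(\nu+1)(n-1)}$ is $O_p(1)$, and the $S(\cdot,Y_{(n)})$ factor supplies $n^{(1-1/\beta)(\nu+1)}$.

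The routes differ in where the approximation is used. The paper replaces $S_n$ by $\hat S_n$ on all of $[x_n^-,x_n^+]$, with error $(\nu+1)n^{1-1/\beta-\delta}W_n\to0$, and bounds the resulting $\hat p_n$ from both sides. You instead get the lower bound on $p_n$ from the exact pointwise inequality $-S_n\le -S(\cdot,Y_{(n)})$ on $(x_n^-,x_n^+)$, so there is no approximation error to control. The paper's lower bound is the same idea applied after approximation: it drops the factor $(x_n^+/x_n^-)^{(\nu+1)(n-1)}\ge 1$.

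Your ``main obstacle'' does not actually arise. In the proof of Theorem~\ref{theo:width} one shows $S_n\bigl(Y_{(n)}-2Y_{(n)}/n\bigr)>0$. Hence $x_n^-\ge Y_{(n)}-2Y_{(n)}/n$ and $[x_n^-,x_n^+]\subset\overline{\mathcal F_n}$. The paper tacitly uses this containment to write $\hat p_n$ in closed form. Your split makes the argument independent of it, which is harmless but unnecessary.

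You should fix the sign slip at the end. Since $-S_n\ge 0$ on the interval, restricting the integral to the inner piece gives a lower bound on $\int(-S_n)$. That is an \emph{upper} bound on $\log p_n$, i.e.\ $p_n\lesssim n^{(1-1/\beta)(\nu+1)}$. It needs $Y_{(n)}-a\gtrsim n^{1/\beta-1}$ at the inner endpoint $a$, which holds because $Y_{(n)}-x_n^-\ge W_n$ and $2Y_{(n)}/n\gtrsim n^{1/\beta-1}$. The comparison with $S(\cdot,Y_{(n)})$ gives the \emph{lower} bound on $\log p_n$. That lower bound is the direction Theorem~\ref{theo:arrhenius_law} actually needs.
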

    \begin{proof}
        From the preceding discussion, $p_n\left(1 - O_{p}(1)\right) \le p_{\tau,n}$ where $p_n = \frac{\pi^{(n)}(x_n^{-})}{\pi^{(n)}(x_n^{+})}$. 
        Define $\hat{p}_n := \exp\left((\nu + 1)\int_{x_n^{-}}^{x_n^{+}} \hat{S}_n(u)\ du\right)$. On the event $E_n$,
        \begin{align*}
            |\ln p_{n} - \ln \hat{p}_n| \le (\nu + 1)\int_{x_n^{-}}^{x_n^{+}} \left|S_n(u) - \hat{S}_n(u)\right|\ ds & \le (\nu + 1)n^{1-1/\beta -\delta}W_n.
        \end{align*}
        By Theorem~\ref{theo:width}, the right-hand side goes to $0$ as $n \to \infty$. Thus, on $E_n$, it follows that $|\ln p_{n} - \ln \hat{p}_n| = o_{p}(1)$. And so, $p_{\tau, n} = O_{p}(p_n) = O_p(\hat{p}_n)$. Now, solving the integral in the definition of $\hat{p}_n$ gives,
        \begin{align*}
            \hat{p}_n &= \left(\frac{x_n^{+}}{x_n^{-}}\right)^{(\nu + 1)(n-1)}\cdot\left(\frac{\nu + (x_n^{+} - Y_{(n)})^2}{\nu + (x_n^{-} - Y_{(n)})^2}\right)^{(\nu + 1)/2} \label{eq:pnhat}.
            % &=\left(1+ \frac{x_n^{+} - x_n^{-}}{x_n^{-}}\right)^{(\nu + 1)(n-1)}\cdot\left(\frac{\nu + (x_n^{+} - Y_{(n)})^2}{\nu + (x_n^{-} - Y_{(n)})^2}\right)^{(\nu + 1)/2}
            \numberthis 
        \end{align*}
        Since $Y_{(n)} - x_n^{+} < \sqrt{\nu} < Y_{(n)} - x_n^{-}$, and $x_n^{-} > Y_{n} - 2\sqrt{\nu} - 4G_kn^{1/\beta - 1}$ we have,
        \[
            \hat{p}_n \ge (\nu/2)^{(\nu + 1)/2}(Y_{(n)} - x_n^{-})^{-(\nu + 1)} \ge n^{(1-1/\beta)(\nu + 1)}\left(\frac{\sqrt{\nu/2}}{2\sqrt{\nu} + 4G_k}\right)^{(\nu + 1)}
        \]
        for large $n$. Similarly, from the proof of Theorem~\ref{theo:width},
        \[
            \frac{x_n^{-}}{x_n^{+}} \ge \frac{Y_{(n)} - 2\sqrt{\nu} - 4G_kn^{1/\beta-1}}{Y_{(n)}} \ge 1 - \frac{2n^{1-1/\beta}\sqrt{\nu} + 4G_k}{m_1n} \ge 1 - \frac{2\sqrt{\nu} + 4G_k}{m_1n}
        \]
        for big $n$. And so, 
        \[
            \hat{p}_n \le \left(1 - \frac{2\sqrt{\nu}+4G_k}{m_1n}\right)^{-(\nu + 1)(n-1)}(2\nu)^{(\nu + 1)/2}W_n^{-(\nu + 1)}.
        \]
        The result then follows by Theorem~\ref{theo:width}.
    \end{proof}

    Lemma~\ref{lem:pn_magnitude} implies that the expected number of excursions the ZZP makes before successfully exiting the micromode on the left is of the order $n^{(1/\beta - 1)(\nu + 1)}$. Additionally, the following result shows that the expected length of each excursion and the expected length of the excursion to the right, both remain $O_p(1)$ on the event $E_n$ as $n \to \infty$.

    \begin{lemma}
    \label{lemma:excursion_length}
        Suppose either $\gamma_n(x) \equiv 0$ or defined by \eqref{eq:excess_rate}. There exists constants $C_1, C_2$, and $C_3$ such that, 
        \begin{equation*}
            \begin{gathered}
                C_1 + O_{p}(n^{1-1/\beta}) \le \EE(T_{n}(x_n^{+}, +1) \mid E_n) \le C_2 + O_p(n^{1-1/\beta}),\\
                \EE(\eta_{n} \mid E_n) \le C_3 + O_p(n^{1-1/\beta}),
            \end{gathered}
        \end{equation*}
        as $n \to \infty$. 
    \end{lemma}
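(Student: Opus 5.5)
We will bound the two expectations using the explicit one-dimensional structure of the Zig-Zag process. The main tool is the fact that, between switches, the position moves at unit speed with rate $\lambda_n(x,v)=(\nu+1)(vS_n(x))_++\gamma_n(x)$. The geometry on $E_n$ is known: $S_n\le 0$ on $(x_n^-,x_n^+)$, $S_n>0$ on $(x_n^+,\infty)$, and $0\le \gamma_n\le(\nu+1)\overline{\gamma_n}$ on $(x_n^-,Y_{(n)})$, with $\gamma_n=0$ beyond $Y_{(n)}$.

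\textbf{The excursion $\eta_n$.} Started at $(x_n^+,-1)$, the process moves left through $(x_n^-,x_n^+)$, where the canonical rate $(\nu+1)(-S_n)_+$ vanishes. So the only switches before hitting $x_n^-$ come from $\gamma_n$, whose rate is at most $(\nu+1)\overline{\gamma_n}$.
\begin{itemize}
\item If no switch occurs, then $\eta_n\le W_n$.
\item If a switch occurs at some $y\in(x_n^-,x_n^+)$, the velocity becomes $+1$ and the process moves right, where the rate is $(\nu+1)(S_n)_++\gamma_n=0+\gamma_n$. It again switches only at rate $\le(\nu+1)\overline{\gamma_n}$, so it either reaches $x_n^+$ within time $W_n$ or switches again.
\end{itemize}
Every segment therefore has length at most $W_n$, and the number of extra switches before $\eta_n$ is stochastically dominated by a Poisson variable with mean $(\nu+1)\overline{\gamma_n}\eta_n$. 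This suggests a renewal bound of the form $\EE\eta_n\le W_n(1+(\nu+1)\overline{\gamma_n}W_n)^{?}$.

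This naive bound is useless, because $W_n$ grows like $n^{1/\beta-1}$ while the lemma claims $C_3+O_p(n^{1-1/\beta})$. The argument must use that the process is trapped near the top: the drift on $(x_n^-,x_n^+)$ pushes it back toward $x_n^+$ only through switches. I would therefore reconsider. With no switch at all the process runs to $x_n^-$ in time $W_n$, but this happens only with probability $p_{\tau,n}$-like weight. Otherwise it switches at a point where the \emph{canonical} rate is active.

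Indeed, for the canonical process the left move from $x_n^+$ has zero rate, so it deterministically reaches $x_n^-$. This suggests that the intended convention is that on the left of $x_n^+$ the process feels the uphill rate $(\nu+1)(-vS_n)_+$ with $v=-1$, that is, $\lambda=(\nu+1)(S_n)_+$. Since $S_n\le0$ there, that rate is zero. Hence moving left from $x_n^+$ is moving \emph{downhill} in potential... Rechecking the sign convention of \eqref{eq:switching_rates}, we have $\lambda(x,-1)=(\nu+1)(-S_n(x))_+>0$ on $(x_n^-,x_n^+)$, so moving left is uphill in potential and the process is turned back. Near $x_n^+$, $-S_n(x)\approx S_n'(x_n^+)(x_n^+-x)$ with $S_n'(x_n^+)\ge c>0$ by Lemma~\ref{lemma:pd_property}. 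The integrated rate over distance $s$ is therefore at least $c(\nu+1)s^2/2$ for $s\le d_n^{-}-d_n^+$, and $\ge\kappa s$ beyond, using $-S_n\ge d_n-n/|Y_{(n)}|\gtrsim n^{1-1/\beta}$ from Lemma~\ref{lemma:outward_grad}.

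\textbf{Bounding the left excursion.} The time to the first switch then has an $O(1)$ mean: it is Rayleigh-type tails near the mode, plus a contribution from reaching far distances of order $W_n\cdot\mathrm{P}(\text{no switch})\le W_n p_n$, which is negligible. After the switch at distance $s$, the return to $x_n^+$ takes time $s$ plus extra $\gamma_n$-induced switches. The latter contribute at most $O(\overline{\gamma_n}\,s\cdot s)$ in expectation, which is $O_p(n^{1-1/\beta})$ after integrating against the $O(1)$-mean law of $s$. This gives $\EE\eta_n\le C_3+O_p(n^{1-1/\beta})$.

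\textbf{Bounding $T_n(x_n^+,+1)$.} On the right of $x_n^+$ the process moves away from the mode against the rate $(\nu+1)S_n(x)$, which is positive and, by Lemma~\ref{lemma:pd_property} and $S_n\ge S(x,Y_{(n)})$ for $x\ge Y_{(n)}$, bounded below by a strictly positive profile. I would use the explicit formula for the first switching point, $\PP(\text{no switch before }x)=\exp(-\int_{x_n^+}^x\lambda)$. The integral is comparable to $(\nu+1)\log(\pi^{(n)}(x_n^+)/\pi^{(n)}(x))$ plus the $\gamma_n$ part, and $\pi^{(n)}$ decays polynomially, like $|x-Y_{(n)}|^{-(\nu+1)}$, before the $n/x$ term takes over. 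This gives an $O(1)$ mean for the first switch location as long as $\nu>0$.

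The return leg moves left with rate $(\nu+1)(-S_n)_+=0$ for $x>x_n^+$, so it is deterministic except for $\gamma_n\le(\nu+1)\overline{\gamma_n}$, whose switches add $O_p(n^{1-1/\beta})$. For the lower bound, $T_n\ge 2\times$ (first switch distance), whose mean converges to a positive constant determined by the limiting profile $S(\cdot,Y_{(n)})$ shifted. Together these give the constants $C_1,C_2$.

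\textbf{Main obstacle.} The hardest step is the uniform-in-$n$ control of the tail integral $\int \exp(-\int\lambda)$ far from the mode on the right. There $\pi^{(n)}$ only decays polynomially, so integrability of the first-switch distance requires $\nu+1>1$ and a careful use of Theorem~\ref{theo:score_bound} on $\mathcal F_n$ and of the exact form of $S_n$ outside it.
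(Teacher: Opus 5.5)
Your treatment of the right excursion $T_n(x_n^+,+1)$ is essentially the paper's argument: a bound on the first switch, then a return leg perturbed only by $\gamma_n\le(\nu+1)\overline{\gamma_n}$ and handled by renewal. The bound on $\eta_n$, however, has a genuine gap for $0<\beta<1$.

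\textbf{The gap.} You use two lower bounds on the leftward rate, and neither suffices.
\begin{enumerate}
\item The quadratic bound from Lemma~\ref{lemma:pd_property} holds only while $|x-Y_{(n)}|\le r<\sqrt{\nu}$; beyond $\sqrt\nu$ one has $S'(x,Y_{(n)})<0$. It does not extend up to $d_n^--d_n^+\asymp n^{1/\beta-1}$.
\item The linear bound from Lemma~\ref{lemma:outward_grad} has slope $\kappa_n\asymp n^{1-1/\beta}$, which tends to $0$.
\end{enumerate}
From these, $\int_0^{W_n}\PP(\text{no switch within distance } s)\,ds$ is only $O(\min\{W_n,\kappa_n^{-1}\})=O_p(n^{1/\beta-1})$, not $O(1)$. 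Invoking $W_np_n$ does not help. The survival function is decreasing, so $W_np_n$ is a \emph{lower} bound for that integral, and the difficulty lies in the intermediate range $\sqrt\nu\lesssim s<W_n$.

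\textbf{The missing idea.} Up to a bounded factor, the survival function has a polynomial tail of order $\nu+1$. On $(x_n^-,x_n^+)$, split the leftward rate as $-(\nu+1)S(x,Y_{(n)})-(\nu+1)\sum_{j<n}S(x,Y_{(j)})$. The second part opposes switching, but it is bounded by $(\nu+1)\overline{\gamma_n}$ by \eqref{eq:gamma_bound}, and $W_n\overline{\gamma_n}=O_p(1)$ by \eqref{eq:w_times_gamma}. Hence, with $a=Y_{(n)}-x_n^+$,
\[
\PP(\text{no switch within distance } t)\le e^{(\nu+1)W_n\overline{\gamma_n}}\Bigl(\frac{\nu+a^2}{\nu+(a+t)^2}\Bigr)^{(\nu+1)/2},\qquad 0\le t\le W_n,
\]
which is integrable uniformly in $n$ because $\nu>0$. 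This is what the paper does in the canonical case, phrased through $p_n$ and Lemma~\ref{lem:pn_magnitude}. For subsampling, the leftward rate there is exactly $-(\nu+1)S(x,Y_{(n)})$, so the tail is exact.

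\textbf{The $\gamma_n$-induced re-switches} (subsampling case). Your estimate $O(\overline{\gamma_n}s\cdot s)$ is not justified. A re-switch during the return launches a fresh heavy-tailed leftward excursion; it does not cost only $O(s)$. Even heuristically, $\EE[s^2]$ over $[0,W_n]$ is of order $W_n^{1-\nu}$ when $\nu<1$. You would then get $O_p(W_n^{-\nu})$ rather than $O_p(n^{1-1/\beta})$, which is still enough for Theorem~\ref{theo:arrhenius_law}. The paper instead argues as follows:
\begin{enumerate}
\item It bounds $\eta_n\le 2W_nN$ with $N$ geometric and $\EE N\le e^{(\nu+1)W_n\overline{\gamma_n}}$.
\item It truncates the first leftward move at $\delta_n=W_n^{1/(1+\nu)}$, so that $W_n\PP(\text{first move}>\delta_n)=O(1)$.
\item Below $\delta_n$, a re-switch has probability at most $(\nu+1)\overline{\gamma_n}$ times the first-move length, and this is absorbed using $W_n\overline{\gamma_n}=O_p(1)$.
\end{enumerate}

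\textbf{The difficulty is on the other side.} The right tail you flag as the main obstacle is easy. To the right of $Y_{(n)}$ every summand of $S_n$ is positive, and $Y_{(n)}-x_n^+<\sqrt\nu$. So $\lambda_n(x,+1)\ge(\nu+1)(S(x,Y_{(n)}))_+$ for $x>x_n^+$, which gives $\EE(T_y^\uparrow)\le C_\nu$ directly, without Theorem~\ref{theo:score_bound}. The delicate side is the left, where the bulk opposes switching over a length $W_n\to\infty$. Also delete your earlier, incorrect claim that the canonical leftward rate vanishes on $(x_n^-,x_n^+)$.
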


    We can now prove Theorem~\ref{theo:arrhenius_law}.

    \begin{proof}[Proof of Theorem~\ref{theo:arrhenius_law}]
        From \eqref{eq:expected_tau}, we have,
        \[
            \frac{1 - p_{\tau, n}}{p_{\tau,n}} \EE(T_n(x_n^{+}, +1)) \le \EE(\tau_n) \le \frac{1}{p_{\tau,n}} \EE(\eta_n) + \frac{1 - p_{\tau, n}} {p_{\tau,n}} \EE(T_n(x_n^{+}, +1)).
        \]
        The result then follows from Lemma~\ref{lem:pn_magnitude} and Lemma~\ref{lemma:excursion_length}.
    \end{proof}

\section{Extensions}
\label{sec:limitations}
    In this paper we choose extreme order statistics since they provide a natural and convenient sequence of isolated points. All the results can easily be extended to, e.g., intermediate order statistics (see \cite{de_haan_extreme_2006}), thanks to Lemma~\ref{lemma:score_bound}. On the computational side, we present the exit time analyses for ZZP only for the most extreme data point, $Y_{(n)}$. The same analysis can be extended to $Y_{(n-k)}$ for $k > 0$ after an additional (but tedious) control of right excursions. 

    Our analysis focuses on one-dimensional ZZP. Exit times for other popular gradient-based algorithms such as overdamped and underdamped Langevin diffusions have been studied by \cite{bovier2004metastability, lee2025eyring} for double-well potentials. Based on these results, we expect these algorithms to behave similarly in the setting considered in this paper. In particular, one could extend our micromode exit-time analysis to these processes in dimensions greater than 1. Although, we expect this extension to be technically more challenging since it typically relies on potential theory and spectral analysis \cite{bovier2016metastability}.

    Finally, we have restricted our attention in this paper to the simple model in \eqref{eq: linear model}. However, as demonstrated in Figure~\ref{fig:micromodes_and_target} in the introduction, the micromodes may also be commonly observed in more complicated models such as the linear model of the form $y_j = A_jx + \varepsilon_j$, $\varepsilon_j \sim t_1$. In this case, we expect the extreme observations to cause a ridge in the posterior surface along the hyperplane $y_{(n)} = A_{(n)}x$ as shown in Figure~\ref{fig:micromodes_and_target}. Our current proof technique via law of large numbers can still be applied here to approximate the empirical score in a tube around the hyperplane instead of a sphere around $y_{(n)}$. Similar analyses to that of Section~\ref{sec:micromode_technical} may then be carried out.

%%%%%%%%%%%%%%%%%%%%%%%%%%%%%%%%%%%%%%%%%%%%%%
%% Single Appendix:                         %%
%%%%%%%%%%%%%%%%%%%%%%%%%%%%%%%%%%%%%%%%%%%%%%
%\begin{appendix}
%\section*{???}%% if no title is needed, leave empty \section*{}.
%\end{appendix}
%%%%%%%%%%%%%%%%%%%%%%%%%%%%%%%%%%%%%%%%%%%%%%
%% Multiple Appendixes:                     %%
%%%%%%%%%%%%%%%%%%%%%%%%%%%%%%%%%%%%%%%%%%%%%%
\begin{appendix}
    \section{Maximum distance scaling}
\subsection{A result for order statistics}
\label{sec:order_stats}   
    Recall that $Y_1, \dots, Y_n \overset{iid}{\sim} P$ where $P$ satisfies Assumption \ref{assum:density}. Let $F_{R}$ denote the distribution function of the radial component of $P$ i.e. $F_R(r) = P(\{|y| \le r\})$. Then, under Assumption \ref{assum:density},
    \begin{equation}
    \label{eq:radial_df}
        F_{R}(r) := \int_{0}^rf_{R}(t)\ dt = \int_{0}^r \omega_{d-1} t^{d-1} h(t) dt, \quad r > 0,
    \end{equation}
    where $h$ is as in the Assumption \ref{assum:density} and $\omega_{d-1} = 2\pi^{d/2}/\Gamma(d/2)$ is the surface area of the unit sphere $\mathbb{S}^{d-1}$. We denote the density of the radial component by $f_{R}(\cdot)$.
    
    On an arbitrary probability space $(\Omega, \mathcal{F}, \mathbb{Q})$, define $\tilde{Y}_{n, 1}, \dots, \tilde{Y}_{n, n-1} \overset{iid}{\sim} P$. Define $Z_n$ to be a random vector independent of $\tilde{Y}_{n, 1}, \dots, \tilde{Y}_{n,n-1}$ with distribution $P_{(n-k)}$. Under Assumption \ref{assum:density}, we have
    \[
        \mathbb{Q}(Z_n \in A) = P_{(n-k)}(A) = \int_{A} \frac{f_{R_{(n-k)}}(|z|)}{\omega_{d-1}|z|^{d-1}}\ dz, \quad A \in \mathcal{B}(\mathbb{R}^d),
    \]
    where $f_{R_{(n-k)}}$ denote the density of the $k-$th largest radius of $n$ samples and is given by,
    \[
        f_{R_{(n-k)}}(r) = \frac{n!}{k!(n - k-1)!}[F_R(r)]^{n-k-1}[1 - F_{R}(r)]^{k}f_{R}(r), \quad r  > 0. 
    \]

    \begin{proposition}
    \label{propo:rejection}
        Let $g:(\mathbb{R}^d)^n \to \mathbb{R}$ be any function that is symmetric in its first $n-1$ arguments. Then, for any $A \in \mathcal{B}(\mathbb{R})$,
        \begin{equation}
        \label{eq:rejection_bound}
            \Prob(g(Y_{-(n-k)}, Y_{(n-k)}) \in A) \le 4\sqrt{k + 1} \cdot \mathbb{Q}(g(\tilde{Y}_{1, n-1}, \dots, \tilde{Y}_{n,n-1}, Z_n) \in A),
        \end{equation}
        where $Y_{-(n-k)}$ denotes the $(n-1)-$tuple $(Y_{1}, \dots, Y_n)$ after removing $Y_{(n-k)}$.
    \end{proposition}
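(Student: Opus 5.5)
The plan is a rejection-sampling (conditioning) argument. On $(\Omega,\mathcal F,\mathbb Q)$ let $E_n$ be the event that $Z_n$ is exactly the $(n-k)$-th order statistic (by radius) of the augmented sample $\{\tilde Y_{n,1},\dots,\tilde Y_{n,n-1},Z_n\}$, i.e. exactly $k$ of the $\tilde Y_{n,j}$ have radius larger than $|Z_n|$. I would aim to show that the $\Prob$-law of $(Y_{-(n-k)},Y_{(n-k)})$, seen through a function symmetric in its first $n-1$ arguments, coincides with the $\mathbb Q$-law of $(\tilde Y_{n,1},\dots,\tilde Y_{n,n-1},Z_n)$ conditioned on $E_n$. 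Granting this, the bound is immediate:
\[
\Prob(g\in A)=\mathbb Q(g\in A\mid E_n)\le \frac{\mathbb Q(g\in A)}{\mathbb Q(E_n)},
\]
and it only remains to prove $\mathbb Q(E_n)\ge 1/(4\sqrt{k+1})$.

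\textbf{Step 1: densities.} Since $g$ is symmetric in its first $n-1$ arguments, sum over which index $i$ realises the $(n-k)$-th radius:
\[
\Prob(g\in A)=n\int p(z)\prod_{j\le n-1}p(y_j)\,\mathbf 1\{\#\{j:|y_j|>|z|\}=k\}\,\mathbf 1\{g(y,z)\in A\}\,dy\,dz .
\]
Under $\mathbb Q$ the joint density is $\prod_j p(y_j)\,f_{R_{(n-k)}}(|z|)/(\omega_{d-1}|z|^{d-1})$. By the formula in the setup this equals
\[
n\,p(z)\,b(|z|)\prod_j p(y_j),\qquad b(r)=\binom{n-1}{k}F_R(r)^{n-k-1}\bigl(1-F_R(r)\bigr)^k .
\]
Note that $b(r)$ is exactly $\mathbb Q(E_n\mid |Z_n|=r)$. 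So the two densities agree on $E_n$ up to the factor $b(|z|)$.

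\textbf{Step 2: lower bound on $\mathbb Q(E_n)$.} Substitute $u=1-F_R(r)$. Then
\[
\mathbb Q(E_n)=\int f_{R_{(n-k)}}\,b = n\binom{n-1}{k}^2\int_0^1 u^{2k}(1-u)^{2n-2k-2}\,du ,
\]
which is an explicit ratio of factorials. Stirling bounds reduce it to $\binom{2k}{k}4^{-k}\cdot(1+o(1))\ge 1/(2\sqrt{k+1})$ up to the stated slack, giving $1/\mathbb Q(E_n)\le 4\sqrt{k+1}$.

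\textbf{Main obstacle.} The delicate point is the identification in the first display. Step 1 shows that $\mathbb Q(\cdot\mid E_n)$ has $z$-marginal proportional to $f_{R_{(n-k)}}\,b$, whereas under $\Prob$ the marginal is $f_{R_{(n-k)}}$. Equivalently,
\[
\Prob(g\in A)=\mathbb E_{\mathbb Q}\!\left[\frac{\mathbf 1\{g\in A\}\,\mathbf 1_{E_n}}{b(|Z_n|)}\right],
\]
so the weight is $1/b(|Z_n|)$ rather than the constant $1/\mathbb Q(E_n)$. A purely multiplicative constant therefore needs a uniform lower bound on $b$ wherever the set $\{g\in A\}\cap E_n$ carries mass. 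I would first try to absorb this by reweighting the $z$-marginal, pairing $r$ with $b(r)$ through the Beta-integral symmetry between the order-statistic density and the binomial weight.

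If that fails, the reduction isolates exactly what must be checked. For the function $g$ used in Remark~\ref{remark:rejection}, which involves the rest of the sample only through remote data, one can restrict to the range where $b(|z|)\gtrsim 1/\sqrt{k+1}$, namely $1-F_R(|z|)\asymp k/n$. That is where $Y_{(n-k)}$ lives with the probability required in Theorem~\ref{theo:score_bound}.
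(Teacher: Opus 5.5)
\textbf{The gap cannot be closed: the proposition is false for general symmetric $g$.} Your diagnosis in the ``main obstacle'' paragraph is correct. The paper's proof uses exactly the identification you doubt. It asserts
$\Prob(g(\hat Y_{-(n-k)},\hat Y_{n-k})\in A)=\mathbb{Q}(g(\tilde Y_{n,1},\dots,\tilde Y_{n,n-1},Z_n)\in A\mid B_k)$, where $B_k$ is your $E_n$. It then divides by $\mathbb{Q}(B_k)\ge 1/(4\sqrt{k+1})$.

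Your density comparison shows that conditioning on $B_k$ tilts the law of $Z_n$ by $b(|Z_n|)$. The correct relation is $\Prob(g\in A)=\mathbb{E}_{\mathbb{Q}}[\mathbf 1\{g\in A\}\mathbf 1_{B_k}/b(|Z_n|)]$, and $\inf_r b(r)=0$. The identification would be exact if $Z_n\sim P$, but then $\mathbb{Q}(B_k)=1/n$ by exchangeability.

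Here is an explicit counterexample. Take $k=0$, $n\ge 3$, $A=\{1\}$ and
\[
g(y_1,\dots,y_{n-1},z)=\mathbf 1\{\max_j|y_j|<|z|\}\,\mathbf 1\{F_R(|z|)<1/2\}.
\]
Under $\Prob$, the first indicator equals one almost surely. Also $F_R(|Y_{(n)}|)$ is the maximum of $n$ uniforms, so $\Prob(g\in A)=2^{-n}$.

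Under $\mathbb{Q}$, $F_R(|Z_n|)$ has density $nu^{n-1}$, and the first indicator has conditional probability $F_R(|Z_n|)^{n-1}$. Hence $\mathbb{Q}(g\in A)=\int_0^{1/2}nu^{2n-2}\,du=\frac{n}{2n-1}2^{1-2n}$.

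The ratio is $(2n-1)2^{n-1}/n$. This already exceeds $4\sqrt{k+1}=4$ at $n=3$, where it equals $20/3$, and it grows exponentially in $n$. So your first repair, reweighting the $z$-marginal via the Beta symmetry, cannot succeed, and neither can the paper's argument.

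\textbf{Your fallback proves a different, additive statement.} Say so explicitly. Split the weight identity on $\{b(|Z_n|)\ge\eta\}$. Bound the complementary part using the same identity applied to the function $\mathbf 1\{b(|z|)<\eta\}$ of $z$ alone. This gives, for every $\eta>0$,
\[
\Prob\bigl(g(Y_{-(n-k)},Y_{(n-k)})\in A\bigr)\le \Prob\bigl(b(|Y_{(n-k)}|)<\eta\bigr)+\eta^{-1}\,\mathbb{Q}\bigl(g(\tilde Y_{n,1},\dots,\tilde Y_{n,n-1},Z_n)\in A\bigr).
\]
Since $n(1-F_R(|Y_{(n-k)}|))$ converges in law to a $\mathrm{Gamma}(k+1,1)$ variable $G$, $b(|Y_{(n-k)}|)$ converges in law to $G^ke^{-G}/k!$, which has no atom at $0$. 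Hence $\mathbb{Q}(g_n\in A_n)\to0$ implies $\Prob(g_n(Y_{-(n-k)},Y_{(n-k)})\in A_n)\to0$. The same holds after conditioning on an event depending on $z$ only whose probability stays bounded away from zero.

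This weaker statement is all that the second part of Proposition~\ref{propo:evt} and Theorem~\ref{theo:score_bound} actually use. Note, however, that you must let $\eta\to0$. A fixed constant of order $\sqrt{k+1}$, as in your restriction to $1-F_R(|z|)\asymp (k+1)/n$, leaves a non-vanishing exceptional probability.

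\textbf{A minor point on Step 2.} Your Stirling argument is only asymptotic and drops a factor $\tfrac12$. A non-asymptotic bound follows from the exact identity
$\mathbb{Q}(B_k)=\frac{n}{2n-1}\binom{2k}{k}\binom{2n-2k-2}{n-k-1}\big/\binom{2n-2}{n-1}\ge\tfrac12\binom{2k}{k}4^{-k}$,
using that $\binom{2j}{j}4^{-j}$ is decreasing in $j$. This is moot given the counterexample above.
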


    \begin{proof}
        Let $\hat{Y}_{n-k}$ be a random vector distributed according to $P_{(n-k)}$. Given $\hat{Y}_{n-k}$, define $\hat{Y}_1, \dots \hat{Y}_{n-k-1}$ to be distributed identically and independently according to $P$ with condition that the radial component is smaller than $\hat{Y}_{n-k}$ i.e.
        \[
            \PP(\hat{Y}_j \in B \mid \hat{Y}_{n-k}) = \frac{P(B\cap \{|Y| \le |\hat{Y}_{n-k}|\})}{P(\{|Y| \le |\hat{Y}_{n-k}|\})}, \quad B \in \mathcal{B}(\mathbb{R}^d), j = 1, \dots, n-k-1.
        \]
        Similarly, define $\hat{Y}_{n-k+1}, \dots, \hat{Y}_n$ to be identically and independently distributed according to $P$ with condition that the radial component is greater than $\hat{Y}_{n-k}$ i.e.
        \[
            \PP(\hat{Y}_j \in B \mid \hat{Y}_{n-k}) = \frac{P(B\cap \{|Y| > |\hat{Y}_{n-k}|\})}{P(\{|Y| > |\hat{Y}_{n-k}|\})}, \quad B \in \mathcal{B}(\mathbb{R}^d), j = n-k+1, \dots, n.
        \]
        Then, the distribution of $\{Y_{1}, \dots, Y_{n}\}$ is same as the distribution of $\{\hat{Y}_{1}, \dots, \hat{Y}_{n}\}$ up to permutations. In particular, $\hat{Y}_{(n-k)} = \hat{Y}_{n-k}$ almost surely, and we have,
        \[
            \Prob(g(Y_{-(n-k)}, Y_{(n-k)}) \in A) = \Prob(g(\hat{Y}_{-(n-k)}, \hat{Y}_{n-k}) \in A).
        \]

        Let $\{\tilde{Y}_{(n, 1)}, \dots,\tilde{Y}_{(n,n-1)}\}$ be the ordered statistics corresponding to $\{\tilde{Y}_{n,1}, \dots,  \tilde{Y}_{n,n-1}\}$. Consider the event,
        \[
            B_k = \{ |\tilde{Y}_{(n,n-k-1)}| \le |Z_n| \le |\tilde{Y}_{(n, n-k)}|\}.
        \]
        Then,
        \begin{align*}
            \Prob(g(\hat{Y}_{-(n-k)}, \hat{Y}_{n-k}) \in A) &= \mathbb{Q}(g(\tilde{Y}_{n,1}, \dots, \tilde{Y}_{n,n-1}, Z_n) \in A \mid B_k) \\
            &\le \frac{\mathbb{Q}(g(\tilde{Y}_{n,1}, \dots, \tilde{Y}_{n,n-1}, Z_n) \in A)}{\mathbb{Q}(B_k)}.
        \end{align*}
        But also,
        \begin{align*}
            \mathbb{Q}(B_k) &= \mathbb{Q}(\{ |\tilde{Y}_{(n,n-k-1)}| \le |Z_n| \le |\tilde{Y}_{(n, n-k)}|\})\\
            &=\binom{n-1}{k} \mathbb{E}\left[\left(F_{R}(|Z_n|)\right)^{n-k-1}\left(1 - F_{R}(|Z_n|)\right)^k\right] \\
            &= \binom{n-1}{k} \frac{n!}{(n-k-1)!k!}\int_{0}^{\infty} [F_{R}(z)]^{2n-2k-2}[1 - F_{R}(z)]^{2k} f_{R}(z)\ dz\\
            % &= \binom{n-1}{k} \frac{n!}{(n-k-1)!k!}\int_{0}^{1} u^{2n-2k-2}(1 - u)^{2k} \ dz\\  
            &= \binom{n-1}{k} \frac{n!}{(n-k-1)!k!}B(2n-2k-1, 2k+1) \\
            % &= \frac{(n-1)!}{(n-k-1)!k!}\frac{n!}{(n-k-1)!k!}\frac{(2n-2k-2)!(2k)!}{(2n-1)!}\\
            &\ge \frac{1}{2\cdot 4^k}\binom{2k}{k} \ge \frac{1}{4\sqrt{k+1}}.
        \end{align*}
    
        And thus the result follows.
    \end{proof}

    We will use Proposition \ref{propo:rejection} extensively in the following to draw conclusions for the $(n-k)-$th order statistic $Y_{(n-k)}$ using the results for $\{\tilde{Y}_{1, n-1}, \dots, \tilde{Y}_{n,n-1}, Z_n\}$.

    \subsection{Proof of Proposition~\ref{propo:evt}}

\label{sec:max_scale}

        In this section, we will prove Proposition \ref{propo:evt}. 
        
        A note on the notations before we begin. In this and following sections, we will use `$f \sim g$' for two functions $f, g: (0, \infty) \to (0, \infty)$ to indicate $\lim_{x \to \infty} f(x)/g(x) = 1$. 
        
        For any function $f$, we will use `$f \le RV_{\rho}$' to mean that there exists a $g \in RV_{\rho}$ and $x_0$ such that for all $x \ge x_0$, $f(x) \le g(x)$. Throughout this section we would often be working in the regime $|x| \to \infty$ where $x \in \mathbb{R}^d$. Accordingly we will write `$\le RV_{\rho}(|x|)$' to highlight this regime. 
    
        Let $h$ and $t_1$ be as in Assumption \ref{assum:density}. Since $h \in RV_{-\beta-d}$, there exists a slowly varying $L$\footnote{A measurable function $h:(0, \infty) \to (0, \infty)$ is slowly varying if, for all $r > 0$, $\lim_{t \to \infty} h(r t)/h(t) = 1$.} such that $h(t) = t^{-\beta-d}L(t)$ for all $t>0$. By assumption, $h$ i.e. $t \mapsto t^{-\beta-d}L(t)$ is non-increasing on $[t_1, \infty)$. Additionally, by Potter bounds for slowly varying functions (see \cite[][, Theorem 1.5.6]{bingham_regular_1987}), there exists a $t_2 > 0$ such that for all $t > t_2$, $L(t/2) \le 2L(t)$. Let $t_0 = \max\{t_1, t_2\}$.
        
        We begin with observing that the distribution of the radial component lies in the domain of attraction of an extreme value distribution. 

        \begin{proposition}
        \label{propo:doa}
            Let $P$ satisfy Assumption \ref{assum:density} and $F_{R}$ denote the distribution function of the radial component. Let $A = \omega_{d-1}K(\beta, d)/\beta$ be a constant. Then,
            \begin{equation}
            \label{eq:evt}
                F^n_{R}((An)^{1/\beta} r) \to \begin{cases}
                    \exp(-r^{-\beta}), & r > 0, \\
                    0, & r \le 0.
                \end{cases}
            \end{equation}
            as $n \to \infty$.
        \end{proposition}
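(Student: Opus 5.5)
We reduce \eqref{eq:evt} to a statement about the radial tail $\bar F_R(r) := 1 - F_R(r)$. The standard elementary fact is this: if $u_n \to \infty$ and $n\bar F_R(u_n) \to \tau \in [0,\infty)$, then $F_R^n(u_n) = (1 - \bar F_R(u_n))^n \to e^{-\tau}$. This follows from $\log(1-s) = -s + O(s^2)$ as $s \downarrow 0$. We therefore plan to show that, for fixed $r > 0$ and $u_n = (An)^{1/\beta} r$, we have $n \bar F_R(u_n) \to r^{-\beta}$.

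The key step is the tail asymptotic
\[
\bar F_R(u) \sim \frac{\omega_{d-1} K(\beta,d)}{\beta}\, u^{-\beta} = A u^{-\beta}, \qquad u \to \infty.
\]
By \eqref{eq:radial_df} we have $\bar F_R(u) = \int_u^\infty \omega_{d-1} t^{d-1} h(t)\,dt$. Assumption~\ref{assum:density} gives $t^{\beta+d}h(t) \to K(\beta,d)$, so $\omega_{d-1} t^{d-1}h(t) = \omega_{d-1} t^{-\beta-1}\,(K(\beta,d) + o(1))$. Fix $\eta > 0$ and take $u$ large enough that the bracket lies in $[K(1-\eta), K(1+\eta)]$ for all $t \ge u$. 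Integrating then yields
\[
(1-\eta)\,A u^{-\beta} \le \bar F_R(u) \le (1+\eta)\,A u^{-\beta}.
\]
Since $\beta > 0$ the integral converges, and this gives the claim. (Karamata's theorem would give the same result for a general slowly varying $L$, but the existence of the limit $K(\beta,d)$ makes the direct sandwich argument enough.)

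Substituting $u_n$ gives $n \bar F_R(u_n) \sim n A (An)^{-1} r^{-\beta} = r^{-\beta}$. Hence $F_R^n(u_n) \to \exp(-r^{-\beta})$ for every $r > 0$. For $r \le 0$, note that $R \ge 0$ almost surely and $F_R$ has a density, so $F_R(0) = 0$ and $F_R(u) = 0$ for $u \le 0$. The limit is therefore $0$ trivially; the scaling $(An)^{1/\beta} r \le 0$ places us there directly.

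The only point requiring care is the uniformity used in the tail integral, namely that the $o(1)$ term is uniform over $t \ge u$. This holds because the pointwise limit is monotone in the sense that the supremum over $t \ge u$ of the error tends to $0$. So there is no real obstacle. The lower regularity conditions on $h$ (local boundedness, the positive limit at $0$) only guarantee that $F_R$ is a genuine continuous distribution function, and they play no role in the limit itself.
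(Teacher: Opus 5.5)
Your proof is correct, but it takes a more direct route than the paper's.

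\textbf{The paper's route.} The paper goes through extreme-value machinery:
\begin{enumerate}
\item Karamata's theorem gives $1-F_R\in RV_{-\beta}$.
\item Resnick's Proposition~1.11 then supplies \emph{some} normalizing sequence $a_n$ with $F_R^n(a_n r)\to\exp(-r^{-\beta})$.
\item The von Mises condition $rF_R'(r)/(1-F_R(r))\to\beta$, together with Resnick's Proposition~1.15, says $a_n$ may be chosen with $a_nF_R'(a_n)\sim\beta/n$.
\item Finally, $t^{\beta+d}h(t)\to K(\beta,d)$ is used to check that $(An)^{1/\beta}$ satisfies this relation.
\end{enumerate}

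\textbf{Your route.} You compute the tail constant explicitly, $1-F_R(u)\sim Au^{-\beta}$, by sandwiching the integrand. You then conclude with the elementary limit $(1-\tau_n/n)^n\to e^{-\tau}$.

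\textbf{What each buys.} Your argument is self-contained. It does not use the monotonicity or local-boundedness parts of Assumption~\ref{assum:density}. It also verifies directly the fact $n\bigl(1-F_R((An)^{1/\beta})\bigr)\to 1$. The paper's final step uses this only implicitly: treating any sequence with $a_nF_R'(a_n)\sim\beta/n$ as an admissible normalization rests on $n(1-F_R(a_n))\to1$, which comes from the von Mises condition.

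The paper's framing places $F_R$ in the Fréchet domain of attraction, which is the form needed to invoke de Haan's Theorem~2.1.1 in the proof of Proposition~\ref{propo:evt}. Your computation yields that as well, since $n(1-F_R(a_nr))\to r^{-\beta}$ for every $r>0$ is exactly the domain-of-attraction statement.

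\textbf{One small correction.} The uniformity you flag at the end has nothing to do with monotonicity. It is simply the definition of $\lim_{t\to\infty}t^{\beta+d}h(t)=K(\beta,d)$, with $K>0$ so that the multiplicative sandwich makes sense. That is exactly what your step ``take $u$ large enough'' already uses.
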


        \begin{proof}
            From \eqref{eq:radial_df}, note that, 
            \[
                1 - F_{R}(r) = \omega_{d-1}\int_{r}^{\infty} t^{d-1}h(t)\ dt, \quad r > 0 
            \]
            and $1 - F_R(r) = 1$ for all $r \le 0$. Since $h \in RV_{-\beta-d}$, it follows that $t^{d-1}h(t) \in RV_{-\beta-1}$. For $\beta > 0$, it follows by the Karamata's Theorem (see \cite[][Theorem 0.6]{resnick_extreme_2008}) that, $1 - F_{R} \in RV_{-\beta}$. By Proposition 1.11 of \cite{resnick_extreme_2008}, there exist a sequence $a_n$ such that,
            \[
                F^n_{R}(a_nr) \to \begin{cases}
                \exp(-r^{-\beta}), & r > 0, \\
                0, & r \le 0.
            \end{cases}
            \]
            It remains to show that $a_n$ can be chosen to be $(An)^{1/\beta}$. 
            
            Also by Karamata's theorem (\cite[][Theorem 0.6]{resnick_extreme_2008}) we get the von-Mises condition,
            \begin{equation}
            \label{eq:von_mises}
                \lim_{r \to \infty} \frac{rF'_{R}(r)}{1 - F_{R}(r)} = \beta,
            \end{equation}
            where recall that $F'_{R}(r) = f_{R}(r) = \omega_{d-1}r^{d-1}h(r)$ is the density function of the radial component. By Proposition 1.15 of \cite{resnick_extreme_2008}, $a_n$ can be chosen to satisfy $a_nF'_R(a_n) \sim \beta/n$. Define, $a_n = (An)^{1/\beta}$. Then,
            \[
                a_nF'_{R}(a_n) = \omega_{d-1}(a_n)^dh(a_n) = \omega_{d-1} (a_n)^{-\beta} \left(a^{\beta+d}_nh(a_n)\right) = \frac{\omega_{d-1}}{An}\left(a^{\beta+d}_nh(a_n)\right).
            \]
            And so,
            \[
                \lim_{n \to \infty} \frac{a_nF'_{R}(a_n)}{\beta/n} = \lim_{n \to \infty} \frac{\omega_{d-1}}{A\beta}\left(a^{\beta+d}_nh(a_n)\right) = 1,
            \]
            since $a_n \to \infty$ as $n \to \infty$ and $t^{\beta+d}h(t)$ converges to $K(\beta, d)$ by assumption. 
            The result follows.
        \end{proof}

        We now give a simple upper bound on ball probability in remote locations of the space. This will be used in the proof of the second part of Proposition \ref{propo:evt}. 
        
        \begin{lemma}
        \label{lemma:ball_probability}
            Let $Y \sim P$ where $P$ satisfies Assumption \ref{assum:density}. Let $z$ be an arbitrary point in space such that $|z| > t_0$ and let $r < |z|/2$. Then,
            \[
                \PP(|Y - z| \le r) \le \frac{\omega_{d-1}}{d\cdot 2^{-\beta-d-1}}\cdot r^d|z|^{-\beta-d} L(|z|)
            \]
            where $L \in RV_{0}$ satisfies $h(t) = t^{-\beta-d}L(t)$. 
        \end{lemma}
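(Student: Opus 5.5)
The plan is to write the probability as an integral of the density $h(|y|)$ over the ball $B_r(z)$ and bound the integrand uniformly on the ball. For $y \in B_r(z)$ with $r < |z|/2$, the triangle inequality gives $|y| \ge |z| - r > |z|/2$. Since $|z| > t_0 \ge t_1$, I want to use that $h$ is non-increasing on $[t_1,\infty)$.

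That monotonicity only applies if $|z|/2 \ge t_1$, which does not follow from $|z| > t_0$ alone. So the first step is to handle this gap. Either I interpret $t_0$ as large enough that $|z|/2 \ge t_1$, or I argue directly that $|y|\ge |z|/2 \ge t_0/2$. The cleanest fix is to strengthen the convention to $t_0 = \max\{2t_1, 2t_2\}$, which I expect is what is implicitly intended. With that in place, for every $y$ in the ball,
\[
h(|y|) \le h(|z|/2) = (|z|/2)^{-\beta-d} L(|z|/2).
\]

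The next step applies the Potter-type bound $L(t/2) \le 2L(t)$, valid for $t > t_2$, at $t = |z|$. This gives
\[
h(|y|) \le 2^{\beta+d+1}|z|^{-\beta-d}L(|z|).
\]

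Finally, I multiply by the volume of the ball, $\mathrm{vol}(B_r(z)) = \omega_{d-1} r^d / d$. This yields
\[
\PP(|Y-z|\le r) \le \frac{\omega_{d-1}}{d}\, 2^{\beta+d+1} r^d |z|^{-\beta-d} L(|z|),
\]
which is exactly the stated bound, since $1/2^{-\beta-d-1} = 2^{\beta+d+1}$.

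The only genuine obstacle is making sure the monotonicity and Potter thresholds apply at $|z|/2$ and not just at $|z|$. I would resolve this by the harmless adjustment of $t_0$ described above. Alternatively, one could use local boundedness of $h$ on compact sets, but that would spoil the explicit constant.
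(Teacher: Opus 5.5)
Your constant and overall structure (ball volume times a uniform bound on $h$ over the ball) match the paper. However, as written you prove the lemma only under the stronger hypothesis $|z| > \max\{2t_1, t_2\}$. Your belief that this strengthening is necessary is mistaken: the statement holds with $t_0=\max\{t_1,t_2\}$ exactly as defined.

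The obstacle comes from the order of your two steps. You first use monotonicity to pass from $|y|$ down to $|z|/2$, which forces $|z|/2\ge t_1$, and only then apply the doubling bound at $t=|z|$. Reverse the order. For $y$ in the ball you have $2|y| > |z| > t_0 \ge t_2$. The bound $L(t/2)\le 2L(t)$ constrains only $t$, not $t/2$, so applying it at $t=2|y|$ gives $L(|y|)\le 2L(2|y|)$, i.e.
\[
h(|y|) = |y|^{-\beta-d}L(|y|) \le 2^{\beta+d+1}(2|y|)^{-\beta-d}L(2|y|) = 2^{\beta+d+1}h(2|y|).
\]
Now both $2|y|$ and $|z|$ lie in $[t_1,\infty)$ with $2|y|>|z|$, so monotonicity of $h$ there gives $h(2|y|)\le h(|z|)$. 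Hence $h(|y|)\le 2^{\beta+d+1}h(|z|)$ uniformly on the ball, and $h$ is never evaluated at $|z|/2$. This is precisely the paper's argument.

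The distinction is not purely cosmetic. In the proof of Lemma~\ref{lemma:concentration}, the paper invokes this lemma at points $x$ whose only stated justification is $|x|>|z|/2>t_0$. Redefining $t_0$ as you propose would therefore require rechecking that step, and the correct ordering avoids this entirely.
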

    
        \begin{proof}
            Let $A_r(z)$ denote the ball $\{y: |y-z| < r\}$ of radius $r$ centered at $z$. Then,
            \[
                \PP(|Y-z| \le r) = \int_{A_r(z)} h(|y|)dy \le \text{Vol}(A_r(z))\cdot\sup_{A_r(z)}h(|y|) \le \frac{\omega_{d-1}r^d}{d}\cdot\sup_{A_r(z)}h(|y|).
            \]
            Observe that $\inf\{|y|: y \in A_{r}(z)\} > |z|/2$ which means that $2|y| > |z| > t_0$ for all $y \in A_{r}(z)$. Moreover, since $L(t/2) \le 2L(t)$ for all $t > t_0 \ge t_2$, it follows that,
            \[
                h(|y|) = |y|^{-\beta-d}L(|y|) \le 2|y|^{-\beta-d}L(2|y|) = 2^{\beta+d+1} h(2|y|).
            \]
            But also by assumption $h(t) = t^{-\beta-d}L(t)$ is non-increasing on $[t_0, \infty)$. Since $2|y| > |z| > t_0$, we get, $h(|y|) \le 2^{\beta + d +1}h(|z|)$ for all $y \in A_{r}(z)$. Thus,
            \[
                \PP(|Y-z| \le r) \le \frac{\omega_{d-1}r^d}{d}\cdot 2^{\beta + d +1}h(|z|).
            \]
            Hence, proved.
        \end{proof}

        \begin{proof}[Proof of Proposition \ref{propo:evt}]
            By Proposition \ref{propo:doa}, the distribution function of the radial component of $P$ lies in the domain of attraction of a Fréchet distribution with parameter $\beta > 0$ and with normalizing sequence $a_n = (An)^{1/\beta}$. By Theorem 2.1.1 of \cite{de_haan_extreme_2006}, it follows that, as $n \to \infty$,
            \[
                \frac{|Y_{(n-k)}| - (An)^{1/\beta}}{\beta^{-1}(An)^{1/\beta}} \xrightarrow{d}  \frac{(E_1 + \dots + E_{k+1})^{-1/\beta} - 1}{\beta^{-1}},
            \]
            where $E_1, \dots, E_{k+1}$ are i.i.d. standard exponential. Simplifying the terms gives,
            \[
                \frac{|Y_{(n-k)}|}{n^{1/\beta}} \xrightarrow{d}  \frac{(E_1 + \dots + E_{k+1})^{-1/\beta}}{A^{-1/\beta}},
            \]
            This proves the first part of the statement. The almost sure convergence follows by considering a rich enough probability space.

            For the second part, let $0 < \epsilon < 1$ be arbitrary. Define $g: (\mathbb{R}^d)^n \to \mathbb{R}$ by,
            \[
                g(y_1, \dots, y_n) = \frac{1}{|y_n|}\min_{j\neq n}|y_j - y_n|.
            \]
            Note that $g$ is symmetric in its first $n-1$ arguments. And,
            \[
                \PP\left(\min_{j \ne {n-k}}|Y_{(j)} - Y_{(n-k)}| > |Y_{(n-k)}|/2n^{\epsilon}\right) = \PP\left(g(Y_{-(n-k)}, Y_{(n-k)}) > 1/2n^{\epsilon}\right).
            \]
            Let $\tilde{Y}_{n,1}, \dots, \tilde{Y}_{n,n-1}, Z_n$ and $\QQ$ be as defined in the Section~\ref{sec:order_stats}. By Proposition \ref{propo:rejection}, 
            \begin{align*}
                &\PP\left(\min_{j \ne {n-k}}|Y_{(j)} - Y_{(n-k)}| > |Y_{(n-k)}|/2n^{\epsilon}\right) \\
                &\quad \quad = 1 - \PP\left(g(Y_{-(n-k)}, Y_{(n-k)}) \le 1/2n^{\epsilon}\right) \\
                &\quad \quad \ge 1 - 4\sqrt{k+1} \cdot \mathbb{Q}\left(g(\tilde{Y}_{1, n-1}, \dots, \tilde{Y}_{n,n-1}, Z_n) \le 1/2n^{\epsilon}\right)\\
                &\quad \quad = 1 - 4\sqrt{k+1} \cdot \mathbb{Q}\left(\min_{j}|\tilde{Y}_{n, j} - Z_n| \le |Z_n|/2n^{\epsilon}\right).
            \end{align*}
            We will show that $\mathbb{Q}\left(\min_{j}|\tilde{Y}_{n, j} - Z_n| \le |Z_n|/2n^{\epsilon}\right)$ goes to $0$ as $n \to \infty$. Consider,
            \begin{align*}
                &\mathbb{Q}\left(\min_{j}|\tilde{Y}_{n, j} - Z_n| \le |Z_n|/2n^{\epsilon}\right)\\&\quad \quad \quad = \mathbb{Q}\left(\exists j :|\tilde{Y}_{n, j} - Z_n| \le |Z_n|/2n^{\epsilon}\right) \\
                &\quad \quad \quad = \mathbb{E}\left[\mathbb{Q}\left(\min_{j}|\tilde{Y}_{n, j} - Z_n| \le |Z_n|/2n^{\epsilon}\bigg|Z_n\right)\right]\\
                &\quad \quad \quad \le \mathbb{E}\left[n\mathbb{Q}\left(|\tilde{Y}_{n, 1} - Z_n| \le |Z_n|/2n^{\epsilon}\bigg|Z_n\right)\right]\\
                &\quad \quad \quad \le n\mathbb{Q}(|Z_n| \le 2t_0) + n\mathbb{E}\left[n\mathbb{Q}\left(|\tilde{Y}_{n, 1} - Z_n| \le |Z_n|/2n^{\epsilon}\bigg|Z_n, |Z_n| > 2t_0\right)\right].
            \end{align*}
            But from Lemma \ref{lemma:ball_probability},
            \begin{align*}
                \mathbb{Q}\left(|\tilde{Y}_{n, 1} - Z_n| \le |Z_n|/2n^{\epsilon}\bigg|Z_n, |Z_n| > 2t_0\right) &\le \frac{\omega_{d-1}}{d\cdot2^{-\beta-d-1}}\left(\frac{|Z_n|}{2n^{\epsilon}}\right)^d|Z_n|^{-\beta-d} L(|Z_n|) \\
                &= \frac{2^{\beta+1}\omega_{d-1}}{d}\frac{|Z_n|^{-\beta}}{n^{d\epsilon}}L(|Z_n|).
            \end{align*}
            Combining with above,
            \[
                \mathbb{Q}\left(\min_{j}|\tilde{Y}_{n, j} - Z_n| \le |Z_n|/2n^{\epsilon}\right) \le n\mathbb{Q}(|Z_n| \le 2t_0) +  \frac{2^{\beta+1}\omega_{d-1}}{d}\mathbb{E}\left[\frac{|Z_n|^{-\beta}}{n^{d\epsilon-1}}L(|Z_n|)\right].
            \]

            For the first term, observe that
            \[
                n\mathbb{Q}(|Z_n| \le 2t_0) = n \sum_{j=0}^k \binom{n}{j} (F_{R}(2t_0))^{n-j}(1 - F_R(2t_0))^{n-j} \le (k+1)n^{k+1}(F_R(2t_0))^{n-k},
            \]
            which goes to $0$ as $n \to \infty$ since $F_{R}(2t_0) < 1$. 
            For the second term, recall that $|Z_n|$ scales as $n^{1/\beta}$, $L(t)$ converges to a finite constant, and $d\epsilon > 0$. Then, the integrand converges to $0$ as $n \to \infty$. Consequently, the expectation goes to $0$ by the bounded convergence theorem. This proves the result.
        \end{proof}

    \section{Proof of Theorem~\ref{theo:no_roots}}
    \begin{proof}[Proof of Theorem~\ref{theo:no_roots}]
    We show it for $k =0$ i.e. $Y_{(n)}$. The arguments for $k > 0$ will follow similarly. Let $x \in B_{\sqrt{\nu}}(Y_{(n)})$ be arbitrary and let $v = -x/|x|$. Consider,
    \begin{align*}
        v^TS_n(x) = \sum_{j=1}^n \frac{-x^T(x - Y_j)/|x|}{\nu + |x - Y_j|^2} &\le \sum_{j=1}^{n} \frac{-|x| + |Y_{(j)}|}{\nu + |x - Y_{(j)}|^2}.
    \end{align*}
    Consider the set $H_n = \{j: |Y_{(j)}| > |Y_{(n)}|/2 - \sqrt{\nu}\}$. For $j \notin H_n$ and $x \in B_{\sqrt{\nu}}(Y_{(n)})$, $-|x| + |Y_{j}| \le -|Y_{(n)}|/2$ since $|x| \ge |Y_{(n)}| - \sqrt{\nu}$. Similarly, $|x - Y_{j}| \le 3|Y_{(n)}|/2$ since $|x| \le |Y_{(n)}| + \sqrt{\nu}$. And so, 
    \begin{align*}
        v^TS_n(x) &\le \sum_{j\notin H_n} \frac{-|x| + |Y_{(j)}|}{\nu + |x - Y_{(j)}|^2} + \sum_{j \in H_n}\frac{-|x| + |Y_{(j)}|}{\nu + |x - Y_{(j)}|^2}\\ 
        &\le \sum_{j\notin H_n} \frac{-2|Y_{(n)}|}{4\nu + 9|Y_{(n)}|^2} + \sum_{j \in H_n}\frac{1}{2\sqrt{\nu}}
    \end{align*}
    for $x \in B_{\sqrt{\nu}}(Y_{(n)})$ and $v = -x/|x|$. Moreover since $|Y_{(n)}| \uparrow \infty$, for large $n$, $4\nu \le 9|Y_{(n)}|^2$ and we get,
    \[
        v^TS_n(x) \le -\frac{n - \sharp H_n}{9|Y_{(n)}|} + \frac{\sharp H_n}{2\sqrt{\nu}}.
    \]
    Let $K \ge 1$ be arbitrary. Then, on the event $\{\sharp H_n \le K\}$, $v^TS_n(x) \downarrow -\infty$ as $n \to \infty$ by Proposition~\ref{propo:evt} for $\beta > 1$. In particular, for any $c > 0$ and for any $K \ge 1$, there exists an $N_0(K)$ such that for $n \ge N_0$
    \begin{align*}
        \{\sharp H_n \le K\} &\implies \left\{\inf \left\{(-x/|x|)^TS_n(x): x \in B_{\sqrt{\nu}}(Y_{(n)})\right\} \le -c\right\} \\
        &\implies \left\{\forall x \in B_{\sqrt{\nu}}(Y_{(n)}) \exists v \in \mathbb{S}^{d-1} : v^TS_n(x)\le -c\right\}. \numberthis \label{eq:neg1}
    \end{align*}

    For $K \ge 1$, consider,
    \begin{align*}
         \PP(\sharp H_n > K) &= \PP\left(\sharp\{j \le n-1: |{Y}_{(j)}| > |Y_{(n))}|/2 - \sqrt{\nu}\} \ge K\right) \\
         &\le \frac{1}{K}\mathbb{E}\left(\sharp\{j \le n-1: |{Y}_{(j)}| > |Y_{(n))}|/2 - \sqrt{\nu}\}\right) \\
         &\le \frac{n}{K}\mathbb{E}\left(\frac{1 - F_{R}(|Y_{(n)}|/2 - \sqrt{\nu})}{F_{R}(|Y_{(n)}|/2)}\right),
    \end{align*}
    where $F_{R}$ is the distribution function of the radial component of $Y$ under $Y \sim P$.
    The term inside the expectation is monotonically decreasing in $|Y_{(n)}|$. By Proposition~\ref{propo:evt}, for some constant $C_{\beta, d}$, 
    \[
        n\left(\frac{1 - F_{R}(|Y_{(n)}|/2 - \sqrt{\nu})}{F_{R}(|Y_{(n)}|/2)}\right) \longrightarrow C_{\beta, d} \left(\frac{G_k}{2}\right)^{-\beta}
    \]
    as $n \to \infty$, $\PP-$almost surely. This means that for large $n$,
    \[
        \PP(\sharp H_n > K) \le \frac{1}{K}C'_{\beta, d} \left(\frac{G_k}{2}\right)^{-\beta}
    \]
    for some $C'_{\beta, d} > C_{\beta, d}$. The right-hand side is decreasing in $K$. Let $\epsilon > 0$ be arbitrary. Then, we may chose $K(\epsilon) \ge 1$ such that the right-hand side is less than $\epsilon$ i.e., given an arbitrary $\epsilon > 0$, we may chose $N_1 > N_0$ such that,
    \[
        \PP(\sharp H_n > K(\epsilon)) \le \epsilon, 
    \]
    for all $n \ge N_1$. Combining this with \eqref{eq:neg1}, we get for all $n \ge N_1$,
    \[
        \PP\left(\left\{\forall x \in B_{\sqrt{\nu}}(Y_{(n)}) \exists v \in \mathbb{S}^{d-1} : v^TS_n(x)\le -c\right\}\right) \ge 1-\epsilon.
    \]
    Since $\epsilon > 0$ is arbitrary, the result follows. 
\end{proof}
    \section{Empirical score approximation}
\label{sec:score_approximation_proof}
    In this Section, we will prove the results from Section~\ref{sec:score_approximation}. 

    \subsection{Proof of Lemma~\ref{lemma:score_bound}}

    To prove Lemma~\ref{lemma:score_bound} split $\mathbb{S}_n$ into,
    \begin{align*}
        \mathbb{S}_n(z) &\le \sup_{x:|x-z| \le \frac{2|z|}{n}} \frac{2|z|}{n}\left|S_n(x) - n\mathbb{E}S(x, Y)\right| + \sup_{x:|x-z| \le \frac{2|z|}{n}}\frac{2|z|}{n}\left|n\mathbb{E}S(x, Y) - \frac{nx}{|x|^2}\right|
    \end{align*}
    We will control each term individually in the following two lemmas. 

     \begin{lemma}
     \label{lemma:concentration}
        Let $0 < \epsilon <  1$ be arbitrary. There exists a $T_2(\epsilon)$ such that for all $n \ge T_2(\epsilon)$,  
        \begin{align*}
            &\Prob\left(\sup_{x:|x-z| \le 2|z|/n}\frac{2|z|}{n}\bigg|S_n(x) - n\mathbb{E}S(x, Y)\bigg| > 48\nu n^{2\epsilon-1} + \frac{|z|^{-\beta}}{n^{\epsilon(d-1)}}L'(|z|) + t\right)\\
            &\quad \quad \le 5^d\exp\left(-\frac{n^{1-2\epsilon}t^2}{512}\right) + C_d\frac{5^d|z|^{-\beta}}{2^dn^{d\epsilon-1}}L(|z|),
        \end{align*}  
        for all $t > 0$ and $|z| > 2n\sqrt{\nu}$, where $L'$ is slowly varying and $L(t) = t^{\beta+d}h(t)$.
    \end{lemma}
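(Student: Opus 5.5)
The plan is to split the data according to distance from the ball $B := \overline{B_{2|z|/n}(z)}$ and treat near and far points separately. Set $r_n := |z|/n^{\epsilon}$ and the near region $N := \{y : |y - z| \le r_n\}$. Write $S(x,y) = S(x,y)\mathbf{1}\{y\notin N\} + S(x,y)\mathbf{1}\{y \in N\}$ and split $S_n - n\mathbb{E}S(\cdot,Y)$ into three pieces:
\begin{itemize}
\item[(a)] a centered sum over far points, $\sum_j \bigl(S(x,Y_j)\mathbf{1}_{N^c}(Y_j) - \mathbb{E}[S(x,Y)\mathbf{1}_{N^c}(Y)]\bigr)$;
\item[(b)] the near-point sum $\sum_j S(x,Y_j)\mathbf{1}_N(Y_j)$;
\item[(c)] its mean $n\,\mathbb{E}[S(x,Y)\mathbf{1}_N(Y)]$.
\end{itemize}
The three terms of the target bound should correspond to these pieces: the Gaussian tail to (a), the probability term $C_d 5^d|z|^{-\beta}n^{1-d\epsilon}L(|z|)$ to (b), and the deterministic $L'$ term to (c). The constant $48\nu n^{2\epsilon-1}$ comes from discretization and bias.

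For (b), I would apply Lemma~\ref{lemma:ball_probability} with radius $r_n < |z|/2$ and a union bound over $n$ points. This gives $\PP(\exists j: Y_j\in N)\le n\,C\,r_n^d|z|^{-\beta-d}L(|z|) = C|z|^{-\beta}n^{1-d\epsilon}L(|z|)$, and on the complement (b) vanishes. This is the second term of the probability bound; the $5^d/2^d$ factor presumably comes from enlarging the ball to cover the net used below, or from the same union step.

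For (c), I would use $|S(x,y)|\le 1/(2\sqrt\nu)$. Again Lemma~\ref{lemma:ball_probability} bounds the mean by $n\,\PP(Y\in N)/(2\sqrt{\nu})$. After multiplying by $2|z|/n$ this leaves $|z|\,r_n^d|z|^{-\beta-d}L(|z|)$, which is not yet of the form $|z|^{-\beta}n^{-\epsilon(d-1)}L'(|z|)$. So I would instead use $|S(x,y)|\le 1/|x-y|$ and integrate $1/|x-y|$ over $N$ against $h\le 2^{\beta+d+1}h(|z|)$. This gives a bound of order $r_n^{d-1}h(|z|)$, and after the factor $2|z|$ (the $n$ cancels) one gets $|z|\cdot |z|^{d-1}n^{-\epsilon(d-1)}|z|^{-\beta-d}L = |z|^{-\beta}n^{-\epsilon(d-1)}L'(|z|)$, exactly the middle term. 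The $1/|x-y|$ singularity is integrable for $d\ge2$; for $d=1$ I would use the bound $1/(2\sqrt\nu)$ together with the length of $N$.

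For (a), the far points satisfy $|x-y|\ge r_n - 2|z|/n \ge r_n/2$ for $x\in B$, so each summand is bounded by $2n^{\epsilon}/|z|$. I would also need Lipschitz control in $x$: $|S'(x,y)|\le 3/(\nu+|x-y|^2)\le 12 n^{2\epsilon}/|z|^2$. Then I would take a $1/2$-net of the unit sphere of directions (at most $5^d$ vectors $u$) together with a fine net of $B$ in $x$. For each fixed $(u,x)$, Hoeffding applied to the variables $\frac{2|z|}{n}u^T(\cdot)$, each bounded by $4n^{\epsilon-1}$, gives the tail $\exp(-c\,n^{1-2\epsilon}t^2)$ with $c=1/512$ after the standard net-doubling of the deviation level.

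The main obstacle is the supremum over $x$. Following the $5^d$ count, I would avoid a separate net in $x$ by bounding the oscillation deterministically. The Lipschitz bound times the diameter $4|z|/n$, times $n$ summands and the prefactor $2|z|/n$, gives $O(n^{2\epsilon}\cdot |z|^{-2}\cdot |z|^2/n)=O(n^{2\epsilon-1})$, which is the $48\nu n^{2\epsilon-1}$-type term. I would therefore evaluate the concentration only at $x=z$ and absorb $\sup_x$ via this oscillation bound for both the empirical and the expected parts; the condition $|z|>2n\sqrt\nu$ is used here to make $\nu$-dependent constants comparable. $T_2(\epsilon)$ is chosen so that $r_n/2 > 2|z|/n$ and $|z|/2 > t_0$. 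A union bound over the three events then gives the claim.
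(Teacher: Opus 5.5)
Your plan follows the paper's proof closely. The paper also splits near from far at radius of order $|z|n^{-\epsilon}$. For the far part it uses a deterministic Lipschitz bound on the oscillation over $\{|x-z|\le 2|z|/n\}$, with concentration applied only at $x=z$ through a $5^d$-point net of the sphere. For the near-point sum it uses a union bound through Lemma~\ref{lemma:ball_probability}, and for the near-point mean it integrates directly.

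\textbf{The gap: the near-point mean (c) when $d=1$.} You propose to bound $|S(x,y)|$ by $1/(2\sqrt{\nu})$ on all of $N$ and multiply by the length of $N$. That is exactly the estimate you rejected a sentence earlier. It gives
\[
2|z|\cdot\tfrac{1}{2\sqrt{\nu}}\,\PP(Y\in N)\asymp \nu^{-1/2}|z|^{1-\beta}n^{-\epsilon}L(|z|).
\]
This exceeds the required $|z|^{-\beta}L'(|z|)$ by the factor $|z|n^{-\epsilon}\ge 2\sqrt{\nu}\,n^{1-\epsilon}$. That factor is unbounded and not slowly varying in $|z|$, so it cannot be absorbed into $L'$.

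The loss also matters downstream. In the proof of Theorem~\ref{theo:score_bound} this term is multiplied by $n^{\delta}$ and evaluated at $|z|\asymp n^{1/\beta}$. That gives order $n^{1/\beta-1-\epsilon+\delta}$, which diverges, for instance, when $\beta=1/2$. And $d=1$ is precisely the case used for the Zig-Zag analysis.

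\textbf{The repair} is the one the paper uses. Bound $|S(x,y)|\le\min\{1/(2\sqrt{\nu}),\,1/|x-y|\}$ and split $N$ at $|x-y|=\sqrt{\nu}$.
\begin{itemize}
\item On $\{|x-y|\le\sqrt{\nu}\}$, use the constant bound together with Lemma~\ref{lemma:ball_probability} at radius $\sqrt{\nu}$ and the monotonicity of $h$. After the factor $2|z|$ this contributes $O(|z|^{1-\beta-d}L(|z|))$, which for $d=1$ is of the form $|z|^{-\beta}L(|z|)$.
\item On the remainder, $\int_{\sqrt{\nu}\le|u|\le R}|u|^{-1}\,du=2\ln(R/\sqrt{\nu})$ with $R\le|z|(2/n+n^{-\epsilon})$. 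This piece is therefore $O(|z|^{-\beta}L(|z|)\ln|z|)$, and $t\mapsto L(t)\ln t$ is slowly varying.
\end{itemize}
For $d\ge 2$, your unsplit integration of $1/|x-y|$ over $N$ is correct and slightly simpler than the paper's two-region argument.

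\textbf{A minor point on constants.} Your oscillation bound combines the Lipschitz constant $12n^{2\epsilon}/|z|^2$, the factor $2n$ for empirical plus mean, and the prefactor and radius $2|z|/n$. This yields $96n^{2\epsilon-1}$, not $48\nu n^{2\epsilon-1}$, and the condition $|z|>2n\sqrt{\nu}$ does not change that. Only the rate $n^{2\epsilon-1}$ is used later, and the paper's own constant is equally loose, so this is not a real gap.
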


    \begin{proof}
        Let $0 < \epsilon < 1$ be arbitrary. Define $\delta_n = 2/n + 1/2n^{\epsilon}$. Let $T_1(\epsilon) > t_0/\sqrt{\nu}$ be such that for all $n \ge T_1(\epsilon)$, $\delta_n < 1/2$. Let $n > T_1(\epsilon)$ be arbitrary. In the following $|z| > 2n\sqrt{\nu}$. Then we have:

        \begin{enumerate}
            \item $|z|/2 > n\sqrt{\nu} > T_1(\epsilon)\sqrt{\nu} > t_0$.
            \item $|z|/2n^{\epsilon} > 1$.
            \item $|z| - |z|\delta_n > |z|/2$.
        \end{enumerate}

        Define the set $A_{\epsilon} = \{y: |y - z| < |z|\delta_n\}$ and $A_{\gamma} = \{y: |y - z| < 2|z|/n\}$. 
        Let $S_1(x, y) = S(x, y)1(y \notin A_{\epsilon})$ and $S_2(x, y) = S(x, y)1(y \in A_{\epsilon})$. 
        Define,
        \begin{equation}
        \label{eq:psi_n}
            \Psi_n(x) = n^{-1}|z|\left(S_n(x) - n\mathbb{E}S(x, Y)\right), \quad x \in \mathbb{R}^d,
        \end{equation}
        Then,
        \begin{equation}
        \label{eq:psi_split}
            \Psi_n(x) = \underbrace{\frac{2|z|}{n}\sum_{j=1}^n (S_1(x, Y_j) - \mathbb{E}S_1(x, Y_j))}_{:= \Psi_{1, n}(x)} + \overbrace{\frac{2|z|}{n}\sum_{j=1}^n (S_2(x, Y_j) - \mathbb{E}S_2(x, Y_j))}^{:= \Psi_{2, n}(x)}
        \end{equation}
        We will consider each term separately.

    \noindent
    {\it \underline{1) Bounds on $\Psi_{1, n}:$}}

        {\it \underline{1.1) Discretization of $A_{\gamma}$:}}
        
        First, consider $\Psi_{1,n}$. Consider the function $s: x \mapsto x/(\nu + |x|^2)$. Suppose $|x|, |y| > r > 1$. Then,
        \begin{align*}
            |s(x) - s(y)| &= \left|\frac{x}{\nu + |x|^2} - \frac{y}{\nu + |y|^2}\right| \le \frac{1}{(\nu + |x|^2)(\nu + |y|^2)}\left|\nu(x - y) + x|y|^2 - y|x|^2\right|\\
            &\le \frac{\nu|x - y|}{(\nu + |x|^2)(\nu + |y|^2)} + \frac{|y|^2|x - y|}{(\nu + |x|^2)(\nu + |y|^2)} + \frac{|y|||x|^2 - |y|^2|}{(\nu + |x|^2)(\nu + |y|^2)} \\
            &\le \frac{\nu|x-y|}{r^4} + \frac{|x-y|}{r^2} + \frac{|x-y|}{r^3} \le 3\nu\frac{|x-y|}{r^2}. \numberthis \label{eq:s_lipschitz}
        \end{align*}
        
        Suppose $y \notin A_{\epsilon}$ and $x \in A_{\gamma}$, then, $|x - y| > ||y - z| - |x - z|| \ge |z|\delta_n - 2|z|/n = |z|/2n^{\epsilon} > 1$. Equation \eqref{eq:s_lipschitz} implies for any $x^1, x^2 \in A_{\gamma}$,
        \begin{align*}
            |S_1(x^1, Y) - S_1(x^2, Y)| \le \frac{12\nu n^{2\epsilon}}{|z|^{2}} ||x^1 - Y| - |x^2 - Y||\le \frac{12\nu n^{2\epsilon}|x^1 - x^2|}{|z|^{2}}
        \end{align*}
        with probability $1$. Now, consider,
        \begin{align*}
            &\left|\Psi_{1, n}(x^1) - \Psi_{1, n}(x^2) \right| \\
            &\quad \quad =\frac{2|z|}{n}\left|\sum_{j=1}^n (S_1(x^1, Y_j) - \mathbb{E}S_1(x^1, Y_j)) - \sum_{j=1}^n (S_1(x^2, Y_j) - \mathbb{E}S_1(x^2, Y_j))\right| \\
            &\quad \quad \le \frac{2|z|}{n}\left(\sum_{j=1}^n\left|S_1(x^1, Y_j) - S_1(x^2, Y_j)\right| + n\mathbb{E}\left[\left|S_1(x^1, Y) - S_1(x^2, Y)\right|\right]\right)\\
            &\quad \quad \le \frac{2|z|}{n}\cdot 2n \cdot\left(\frac{12\nu n^{2\epsilon}|x^1 - x^2|}{|z|^{2}}\right) \le 48\nu n^{2\epsilon}|z|^{- 1}|x^1 - x^2|,
        \end{align*}
        with probability $1$.
        It then follows that for all $x \in A_{\gamma}$,
        \[
            |\Psi_{1, n}(x)| \le \left|\Psi_{1, n}(x) - \Psi_{1, n}(z)\right| + |\Psi_{1, n}(z)|  \le 48\nu n^{2\epsilon}|z|^{- 1}\frac{|z|}{n} + |\Psi_{1, n}(z)|.
        \]
        Thus,
        \begin{equation}
        \label{eq:psi1_break1}
            \sup_{x \in A_{\gamma}} |\Psi_{1, n}(x)| \le 48\nu n^{2\epsilon-1} + |\Psi_{1, n}(z)|
        \end{equation}
        with probability $1$.

        {\it \underline{1.2) Concentration in $\Psi_{1,n}$ }}

        Observe that since $|y - z| \ge |z|\delta_n > |z|/2n^{\epsilon}$ for all $y \notin A_{\epsilon}$, it follows that $|S_1(z, Y)| \le 2n^{\epsilon}|z|^{-1}$ is a bounded random vector. Standard concentration inequality via Euclidean nets (see e.g. Proposition 28 in \cite{bordino_tests_2024}) yields,
        \begin{align*}
            \Prob(|\Psi_{1, n}(z)| > t) &= \Prob\left(\frac{1}{n}\left|\sum_{j=1}^n (S_1(z, Y_j) - \mathbb{E}S_1(z, Y_j))\right| > \frac{t}{2|z|}\right)\\
            &\le 5^d\exp\left(-\frac{n}{8}\cdot\frac{t^2}{4|z|^2}\frac{1}{(2n^{\epsilon}|z|^{-1})^2}\right) = 5^d\exp\left(-\frac{n^{1-2\epsilon}t^2}{128}\right). \numberthis \label{eq:psi1_break2}
        \end{align*}
         
        {\it \underline{1.3) Combining observations}}

        Using \eqref{eq:psi1_break2} and \eqref{eq:psi1_break1}, we get for all $z$ such that $|z| \ge 2n\sqrt{\nu}$,
        \begin{equation}
        \label{eq:break1}
            \Prob\left(\sup_{x \in A_{\gamma}} |\Psi_{1, n}(x)| > 48\nu n^{2\epsilon-1} + t/2\right) \le 5^d\exp\left(-\frac{n^{1-2\epsilon}t^2}{128}\right).
        \end{equation}

    \noindent
    {\it \underline{2) Bounds on $\Psi_{2, n}:$}}
    
        Now, we consider $\Psi_{2,n}$. From \eqref{eq:psi_split} we get,
        \begin{equation}
        \label{eq:psi2_split}
            \left|\Psi_{2,n}(x)\right| \le  \frac{2|z|}{n} \left|\sum_{j=1}^n S_2(x, Y_j)\right| + 2|z|\cdot\mathbb{E}|S_2(x, Y)|.
        \end{equation}

        {\it \underline{2.1) Controlling the expectation in \eqref{eq:psi2_split}}}
        
        For any $x \in A_{\gamma}$, consider first the expectation $\mathbb{E}|S_2(x, Y)|$. Then,
        \begin{align*}
            \mathbb{E}|S_2(x, Y)| &= \int_{A_{\epsilon}}|S(x, y)|P(dy) \\
            &= \int_{A^1_{\epsilon}}|S(x, y)| P(dy) + \int_{A^2_{\epsilon}}|S(x, y)| P(dy),
        \end{align*}
        where $A^1_{\epsilon}  = \{y: |x - y| \le \sqrt{\nu}\} \cap A_{\epsilon}$ and $A^2_{\epsilon} = A_{\epsilon}\backslash A^1_{\epsilon}$. 
        
        On $A^1_{\epsilon}$, we use the global bound on $|S(x, y)|$ i.e. $|S(x, y)| \le 1/2\sqrt{\nu} =: M$. And so,
        \begin{align*}
            \int_{A^1_{\epsilon}}|S(x, y)|P(dy) &\le MP(\{Y: |x - Y| \le \sqrt{\nu}\} \cap A_{\epsilon})\le MP(\{Y:|x - Y|\le \sqrt{\nu}\})\\ 
            &\le MC_d(\sqrt{\nu})^d |x|^{-\beta-d}L(|x|),
        \end{align*} 
        where the last inequality is due to Lemma \ref{lemma:ball_probability} and because for all $x \in A_{\gamma}$, $|x| > |z| - 2|z|/n > |z|/2 > t_0$ and $\sqrt{\nu} < n\sqrt{\nu} < |x|/2$. Recall that $|x|^{-\beta-d}L(|x|) = h(|x|)$. Using the assumption that $h$ is non-increasing after $t_0$, we get, 
        \begin{equation}
        \label{eq:expectation_s2_first}
            \sup_{x \in A_{\gamma}}\int_{A^1_{\epsilon}}|S(x, y)|P(dy) \le \frac{MC_d\nu^{d/2}}{2^{-\beta-d}}|z|^{-\beta - d}L(|z|/2) \le \frac{MC_d\nu^{d/2}}{2^{-\beta-d-1}}|z|^{-\beta - d}L(|z|).
        \end{equation}
        
        On the other hand, consider,
        \[
            \int_{A^2_{\epsilon}} \frac{|x-y|}{\nu + |x-y|^2} P(dy) \le \int_{A^2_{\epsilon}}\frac{1}{|x-y|}h(|y|)dy < \infty,
        \]
        since $|x-y| > \sqrt{\nu}$ on $A^2_{\epsilon}$. For all $y \in A^2_{\epsilon}$, $|y| > |z| - |z|\delta_n > |z|/2$. And so, for all $y \in A^2_{\epsilon}$, $h(|y|) \le h(|z|/2)$. Moreover, on $A^2_{\epsilon}$, note that for any $x\in A_{\gamma}$, $\sqrt{\nu} < |x - y| < 2|z|/n + |z|\delta_n$. Recall that $\delta_n = 2/n + 1/2n^{\epsilon}$. Since $0<\epsilon<1$, we have $2|z|/n + |z|\delta_n \le 9|z|/2n^{\epsilon}$. Then,
        \begin{align*}
            \int_{A^2_{\epsilon}}\frac{1}{|x-y|}h(|y|)dy &\le h(|z|/2)\int_{A^2_{\epsilon}}\frac{1}{|x-y|}dy \le 2^{\beta+d+1}h(|z|) \int_{\sqrt{\nu} \le |x-y| \le \frac{9|z|}{2n^{\epsilon}}}\frac{1}{|x-y|}dy\\
            &=2^{\beta+d+1}h(|z|)\begin{cases}
                \omega_d\displaystyle\int_{\sqrt{\nu}}^{\frac{9|z|}{2n^{\epsilon}}}r^{d-2}\ dr, & d \ge 2,\\
                2\displaystyle\int_{\sqrt{\nu}}^{\frac{9|z|}{2n^{\epsilon}}}\frac{1}{r}\ dr, & d =1,
            \end{cases}\\
            &\le 2^{\beta+d+1}h(|z|)\begin{cases}
                \omega_d\left(\frac{9|z|}{2n^{\epsilon}}\right)^{(d-1)}, & d \ge 2,\\
                2\ln\frac{9|z|}{2n^{\epsilon}}, & d =1,
            \end{cases} \numberthis \label{eq:expectation_s2_second}
        \end{align*}
        where $\omega_d$ is the surface area of a $(d-1)-$dimensional unit sphere in $\mathbb{R}^d$. From \eqref{eq:expectation_s2_first} and \eqref{eq:expectation_s2_second}, we get,
        \[
            2|z|\cdot\sup_{x \in A_{\gamma}}\mathbb{E}|S_2(x, Y)| \le 8\left(\frac{|z|}{2}\right)^{-\beta-d+1}L(|z|)\begin{cases}
                MC_d\nu^{d/2} + \omega_d\left(\frac{9|z|}{2n^{\epsilon}}\right)^{(d-1)}, & d \ge 2,\\
                MC_d\nu^{d/2} + 2\ln\frac{9|z|}{2n^{\epsilon}}, & d =1,
            \end{cases}.
        \]
        Note that since $|z| > 2n\sqrt{\nu}$, the term $|z|/n^{\epsilon}$ can be made arbitrarily large by taking $n$ large. And so, let $T_2(\epsilon) > T_1(\epsilon)$ be such that for all $n > T_2(\epsilon)$, there exists a slowly varying function $L'$ such that,
        \begin{equation}
        \label{eq:psi2_break1}
            |z|\cdot\sup_{x \in A_{\gamma}}\mathbb{E}|S_2(x, Y)| \le \frac{|z|^{-\beta}}{n^{\epsilon(d-1)}}L'(|z|).
        \end{equation}

        {\it \underline{2.2) Controlling the average in \eqref{eq:psi2_split}}}

        Observe that $|\sum_{j=1}^n S_2(x, Y_j)| > 0$ if and only if there exists a $Y_j$ such that $Y_j \in A_{\epsilon}$. Recall that $A_{\epsilon} = \{y: |y-z| \le |z|\delta_n\}$. Since $\delta_n < 1/2$ we get by Lemma \ref{lemma:ball_probability},
        \[
            \Prob\left(\sup_{x \in A_{\gamma}}\left|\sum_{j=1}^n S_2(x, Y_j)\right| > \frac{nt}{4|z|}\right) \le nP(|Y-z|\le|z|\delta_n) \le nC_d(|z|\delta_n)^d|z|^{-\beta-d}L(|z|).
        \]
        But also $\delta_n = 2/n + 1/2n^{\epsilon} \le 5/2n^{\epsilon}$. So,
        \begin{equation}
        \label{eq:psi2_break2}
            \Prob\left(\sup_{x \in A_{\gamma}}\left|\sum_{j=1}^n S_2(x, Y_j)\right| > \frac{nt}{4|z|}\right) \le C_d\frac{5^d|z|^{-\beta}}{2^dn^{d\epsilon-1}}L(|z|).
        \end{equation}

        {\it \underline{2.3) Combining observations}}
        
        Combining \eqref{eq:psi2_break1} and \eqref{eq:psi2_break2} with \eqref{eq:psi2_split}, we get,
        \begin{equation}
        \label{eq:break2}
            \Prob\left(\sup_{x \in A_{\gamma}}|\Psi_{2, n}(x)| > \frac{|z|^{-\beta}}{n^{\epsilon(d-1)}}L'(|z|) + t/2\right) \le C_d\frac{5^d|z|^{-\beta}}{2^dn^{d\epsilon-1}}L(|z|).
        \end{equation}

    \noindent
    {\it \underline{3) Combining $\Psi_{1,n}$ and $\Psi_{2, n}$}}
            
        Suppose $n \ge T_2(\epsilon)$. Combining \eqref{eq:break1} and \eqref{eq:break2}, we get, for all $|z| > 2n\sqrt{\nu}$,
        \begin{align*}
            &\Prob\left(\sup_{x:|x-z| \le 2|z|/n}\frac{2|z|}{n}\bigg|S_n(x) - n\mathbb{E}S(x, Y)\bigg| > 48\nu n^{2\epsilon-1} + \frac{|z|^{-\beta}}{n^{\epsilon(d-1)}}L'(|z|) + t\right)\\
            &\quad \quad \le 5^d\exp\left(-\frac{n^{1-2\epsilon}t^2}{512}\right) + C_d\frac{5^d|z|^{-\beta}}{2^dn^{d\epsilon-1}}L(|z|).
        \end{align*}
        for a slowly varying $L'$. This finishes the proof.
    \end{proof}

    \begin{lemma}
    \label{lemma:error_expectation}
        Let $Y \sim P$ where $P$ satisfies Assumption \ref{assum:density} with $0 < \beta \le 1$. For all $\nu$, there exists a $T_3(\nu)$ such that for all $n \ge T_3(\nu)$ and $|z| > 2n\sqrt{\nu}$,
        \[
            \sup_{x:|x-z| \le \frac{2|z|}{n}}\frac{2|z|}{n}\left|n\mathbb{E}S(x, Y) - \frac{nx}{|x|^2}\right| \le 
                RV_{-\beta}(|z|).
        \]
    \end{lemma}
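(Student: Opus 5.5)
We need to bound $\sup_{|x-z|\le 2|z|/n}\frac{2|z|}{n}\,\bigl|n\mathbb{E}S(x,Y)-nx/|x|^2\bigr| = \sup 2|z|\,\bigl|\mathbb{E}S(x,Y)-x/|x|^2\bigr|$ by a function in $RV_{-\beta}(|z|)$. The plan is to write
\[
\mathbb{E}S(x,Y)-\frac{x}{|x|^2}=\int\Bigl(S(x,y)-\frac{x}{|x|^2}\Bigr)P(dy).
\]
The point is that $S(x,y)\approx x/|x|^2$ whenever $|y|\ll|x|$. The error is then controlled by how much mass $P$ puts at radius comparable to $|x|$, and that mass is of order $|x|^{-\beta}$.

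For $x\in A_\gamma$ we have $|x|/|z|\in[1-2/n,1+2/n]$, so it suffices to bound $|x|\,|\mathbb{E}S(x,Y)-x/|x|^2|$ by $RV_{-\beta}(|x|)$ uniformly in $|x|>n\sqrt\nu$ and then use $|x|\asymp|z|$ together with the uniform convergence theorem for regularly varying functions. Split the integral into three regions:
\begin{itemize}
\item $D_1=\{|y|\le|x|/2\}$,
\item $D_2=\{|y|>|x|/2,\ |x-y|\le \sqrt\nu\}$,
\item $D_3=\{|y|>|x|/2,\ |x-y|>\sqrt\nu\}$.
\end{itemize}

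\textbf{Region $D_1$.} Expand $S(x,y)=(x-y)/(\nu+|x-y|^2)$ around $y=0$:
\[
\Bigl|S(x,y)-\frac{x}{|x|^2}\Bigr|\le C\,\frac{|y|+\nu/|x|}{|x|^2}.
\]
For the $|y|$ term, use the Lipschitz bound \eqref{eq:s_lipschitz} with $r=|x|/2$. The $\nu/|x|$ term comes from comparing $x/(\nu+|x|^2)$ with $x/|x|^2$. Hence
\[
|x|\int_{D_1}\Bigl|S-\frac{x}{|x|^2}\Bigr|\,dP \le \frac{C}{|x|}\,\mathbb{E}\bigl[|Y|\mathbf{1}(|Y|\le|x|/2)\bigr]+\frac{C\nu}{|x|^2}.
\]
By Karamata's theorem (the radial tail is in $RV_{-\beta}$ with $\beta\le1$), the truncated first moment is in $RV_{1-\beta}$; when $\beta=1$ it is slowly varying, which is still fine. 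Dividing by $|x|$ gives $RV_{-\beta}$, and the term $\nu|x|^{-2}$ is dominated by it since $\beta\le1<2$.

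\textbf{Region $D_2$.} Bound $|S|\le 1/(2\sqrt\nu)$ and $|x/|x|^2|\le 1/|x|$, and apply Lemma~\ref{lemma:ball_probability}. This gives $|x|\cdot O(|x|^{-\beta-d}L(|x|))$, which is $RV_{1-\beta-d}\le RV_{-\beta}$.

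\textbf{Region $D_3$.} Bound the $x/|x|^2$ part by $|x|\cdot|x|^{-1}P(|Y|>|x|/2)$, which is in $RV_{-\beta}$. For the $S$ part, bound $|S|\le 1/|x-y|$ and use $h(|y|)\le 2^{\beta+d+1}h(|x|)$ for $|x|/2<|y|\le 2|x|$, via the monotonicity and Potter bound exactly as in the proof of Lemma~\ref{lemma:concentration}. The integral $\int_{\sqrt\nu<|x-y|<3|x|}|x-y|^{-1}\,dy$ is of order $|x|^{d-1}$, or $\log|x|$ if $d=1$. This yields $|x|\cdot h(|x|)|x|^{d-1}=|x|^{-\beta}L(|x|)$, up to a log when $d=1$, which is absorbed into the slowly varying factor. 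For $|y|>2|x|$, we have $|S|\le 1/|y|$ up to constants, so the contribution is at most $|x|\,\mathbb{E}[|Y|^{-1}\mathbf{1}(|Y|>2|x|)]$, which by Karamata is in $RV_{-\beta}$.

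The main obstacle is making the bounds uniform and the constants explicit, so that the bound holds for all $n\ge T_3(\nu)$ and all $|z|>2n\sqrt\nu$. The regular-variation asymptotics (Karamata and Potter) only hold beyond some threshold radius. I would handle this by choosing $T_3$ so that $n\sqrt\nu$ exceeds $t_0$ and the Karamata thresholds; every estimate then holds as an inequality against a fixed $RV_{-\beta}$ majorant. Summing the three regions gives the claim.
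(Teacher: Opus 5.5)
Your proposal is correct and follows essentially the same route as the paper: reduce to bounding $|x|\,\bigl|\mathbb{E}S(x,Y)-x/|x|^2\bigr|$, treat $\{|y|<|x|/2\}$ by a first-order expansion plus Karamata's theorem for the truncated first moment (exactly where $\beta\le 1$ is used), and treat $\{|y|\ge|x|/2\}$ by splitting off the $\sqrt{\nu}$-ball (Lemma~\ref{lemma:ball_probability}) and using monotonicity of $h$ with the $\int|x-y|^{-1}\,dy$ estimate elsewhere; the only differences are cosmetic ones in how that outer region is partitioned. One small point in your favour is that you explicitly keep the contribution of $\frac{x}{|x|^2}$ on $\{|y|\ge|x|/2\}$, i.e. a term of size $P(|Y|\ge|x|/2)$, which the paper's display for the region $|Y|<|x|/2$ silently drops (harmlessly, since it lies in $RV_{-\beta}$).
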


    \begin{proof} 
       Let $T_3(\nu) \ge 4$ be such that $T_3 \ge 2t_0/\sqrt{\nu}$. Let $n > T_3$ be arbitrary. Then, $|z| > 2n\sqrt{\nu}$ implies for all $|x - z| < 2|z|/n$, $|x| > |z|/2 > \max\{2t_0, 2\sqrt{\nu}\}$. For any such $x$, define
       \[
            g(y) = \frac{|x|(x - y)}{\nu + |x - y|^2}, \quad y \in \mathbb{R}^d.
       \]
       Then, $|x|\mathbb{E}S(x, Y) = \mathbb{E}g(Y)$, and,
       \begin{equation}
       \label{eq:first_break}
            \left||x|\mathbb{E}S(x, Y) - \frac{x}{|x|}\right| \le  \left|\mathbb{E}g(Y)1_{|Y| < |x|/2} - \frac{x}{|x|}\right| + \mathbb{E}|g(Y)|1_{|Y|\ge|x|/2}.
       \end{equation}
       We first consider $\mathbb{E}|g(Y)|1_{|Y|\ge|x|/2}$. We split the region $|y| \ge |x|/2$ into further three parts: 
        \[
            \{y: |y| \ge |x|/2\} = A_1 \cup A_2 \cup A_3,
        \]
        where,
        \begin{gather*}
            A_1 := \{|y| \ge |x|/2, |x - y| > |x|/2\}, \quad  
            A_2 := \{\sqrt{\nu} < |x - y| \le |x|/2\},\\
            A_3 := \{|x - y| \le \sqrt{\nu}\},
        \end{gather*}
        and consider each part separately.
        
        For $ y \in A_1$, observe that $|g(y)| \le 2$. Then,
        \[
            \mathbb{E}|g(Y)|1_{Y \in A_1} \le 2P(A_1) \le 1 - F_{R}\left(|x|/2\right),
        \]
        where $F_{R}$ is the distribution function of the radial component $|Y|$ and $1 - F_{R} \in RV_{-\beta}$. Thus, we have,
        \begin{equation}
        \label{eq:A1}
            \mathbb{E}|g(Y)|1_{Y \in A_1} \le RV_{-\beta}(|x|)
        \end{equation}
        
        For $y \in A_2$, note that $t_0 < |x|/2 \le|y| \le 3|x|/2$. Since $h$ is non-increasing on $[t_0, \infty)$,
        \begin{align*}
            \mathbb{E}|g(Y)|1_{Y \in A_2} = \int_{A_2} \frac{|x||x - y|}{\nu + |x - y|^2}h(|y|)\ dy \le |x|h(|x|/2)\int_{c <|u|< |x|/2} \frac{ |u|}{\nu + |u|^2}\ du.
        \end{align*}
        Now,
        \[
            \int_{\sqrt{\nu} <|u|< |x|/2} \frac{ |u|}{\nu + |u|^2}\ du \le \int_{\sqrt{\nu} <|u|< |x|/2} \frac{1}{|u|}\ du \le \begin{cases}
                C_d |x|^{d-1} & d \ge 2 \\
                C_1\ln |x|, & d = 1,
            \end{cases}
        \]
        for some dimension-dependent constants $C_d$. This implies,
        \[
            \mathbb{E}|g(Y)|1_{Y \in A_2} \le \begin{cases}
                C_d |x|^{d}h(|x|/2) & d \ge 2 \\
                C_1 |x|h(|x|/2)\ln |x|, & d = 1.
            \end{cases}
        \]
        Since $h \in RV_{-\beta-d}$ for $\beta > 0$, and $\ln \in RV_{0}$ i.e. $\ln(|x|)$ is slowly-varying, the right-hand side $\in RV_{-\beta}$ for all $d$. We get,
        \begin{equation}
        \label{eq:A2}
            \mathbb{E}|g(Y)|1_{Y \in A_2} \le RV_{-\beta}(|x|)
        \end{equation}

        Finally, consider $A_3$. Then, $|g(y)|$ is bounded by $|x|/2\sqrt{\nu}$. We get,
        \[
            \mathbb{E}|g(Y)|1_{Y \in A_3} \le \frac{|x|}{2\sqrt{\nu}} \PP(A_3).
        \]
        Note that $|x| \ge \max\{2t_0, 2\sqrt{\nu}\}$ implies $x - \sqrt{\nu} > t_0$. So $h$ is non-increasing on $|x|-\sqrt{\nu} \le |y| \le |x| + \sqrt{\nu}$. We get,
        \[
            \PP(A_3) = \int_{|x - y| \le \sqrt{\nu}} h(|y|)dy \le C'_d h(|x| - \sqrt{\nu}).
        \]
        And so, 
        \begin{equation}
        \label{eq:A3}
            \mathbb{E}|g(Y)|1_{Y \in A_3} \le RV_{-\beta-d}(|x|).
        \end{equation}

       Then, equations \eqref{eq:A1}, \eqref{eq:A2}, and \eqref{eq:A3} imply that,
       \begin{equation}
        \label{eq:second_break}
             \mathbb{E}|g(Y)|1_{|Y|\ge|x|/2} \le  \mathbb{E}|g(Y)|1_{Y \in A_1} + \mathbb{E}|g(Y)|1_{Y \in A_2} + \mathbb{E}|g(Y)|1_{Y \in A_3} \le RV_{-\beta}(|x|).
       \end{equation}
       
       Now, consider $|y| < |x|/2$. For $y \neq x$, we can decompose $g$ as,
        \[
            g(y) = \frac{x}{|x|} + \underset{:= g_1(y)}{\left(\frac{|x|(x - y)}{|x - y|^2} - \frac{x}{|x|}\right)} - \underset{:= g_2(y)}{\left(\frac{\nu|x|(x - y)}{|x - y|^2(\nu + |x - y|^2)}\right)}
        \]
        For $|y| < |x|/2$, it follows that $|x-y|> |x|/2$. It is then easy to see that $|g_2(y)| \le 8\nu/|x|^2$. Moreover,
        $|g_1(y)| \le 14\frac{|y|}{|x|}$. And so,
        \begin{equation*}
            \left|\mathbb{E}g(Y)1_{|Y| < |x|/2} - \frac{x}{|x|}\right| \le \frac{14}{|x|}\mathbb{E}\left(|Y|1_{|Y| < |x|/2}\right) + \frac{8\nu}{|x|^2}.
        \end{equation*}

        For $0 < \beta \le 1$, Karamata's theorem \citep{bingham_regular_1987} implies that $|x| \mapsto |x|^{-1}\mathbb{E}(|Y|1_{|Y| < |x|/2}) \in RV_{-\beta}$. So,
        \begin{equation}
        \label{eq:approx2}
            \left|\mathbb{E}g(Y)1_{|Y| < |x|/2} - \frac{x}{|x|}\right| \le RV_{-\beta}(|x|) + \frac{8\nu}{|x|^2} \le RV_{-\beta}(|x|).
        \end{equation}

        Combining \eqref{eq:second_break} with \eqref{eq:approx2}, we get,
        \[
            \left||x|\mathbb{E}S(x, Y) - \frac{x}{|x|}\right| \le RV_{-\beta}(|x|).
        \]
        This proves the result.
    \end{proof}

    \begin{proof}[Proof of Lemma \ref{lemma:score_bound}]
        It follows by combining Lemmas \ref{lemma:concentration} and \ref{lemma:error_expectation} and by the monotone equivalence of regularly varying functions \cite[][Theorem 1.5.3]{bingham_regular_1987}.
    \end{proof}

\subsection{Proof of Theorem~\ref{theo:score_bound}}    
\label{sec:score_proof}  
   \begin{proof}[Proof of Theorem~\ref{theo:score_bound}]
        Define $g:(\mathbb{R}^d)^n \to \mathbb{R}$ by,
        \[
            g(y_1, \dots, y_n) = \sup_{\{x : |x - y_n|\le 2|y_n|/n\}} \left|\sum_{j=1}^{n-1} S(x, y_j) - \frac{(n-1) x}{|x|^2}\right|.
        \]
        Then, $g$ is symmetric in its first $n-1$ arguments. Let $\{\tilde{Y}_{n,1}, \dots, \tilde{Y}_{n,n-1}, Z_n\}$ be as defined in Section \ref{sec:order_stats}. Observe that, 
        \begin{align*}
            \sup_{x \in \mathbb{R}^{d}}\left|S_n(x) - \hat{S}_n(x)\right| &= \sup_{x \in \mathcal{F}_n}\left|\sum_{j\neq n-k} S(x, Y_{(j)}) - \frac{(n-1)x}{|x|^2}\right|\\
            &= g(Y_{-(n-k)}, Y_{(n-k)}).
        \end{align*}
        Then, by Proposition \ref{propo:rejection}, 
        \begin{align*}
            &\PP\left(\sup_{x \in \mathbb{R}^{d}}\left|S_n(x) - \hat{S}_n(x)\right| > n^{1-1/\beta-\delta} \mid A_n \right) \\
            &\quad \quad \quad = \PP\left(g(Y_{-(n-k)}, Y_{(n-k)}) > n^{1 - 1/\beta -\delta} \mid |Y_{(n-k)}| > 2n\sqrt{\nu}\right) \\
            &\quad \quad \quad \le 4\sqrt{k+1} \mathbb{Q}\left(g(\tilde{Y}_{n,1}, \dots, \tilde{Y}_{n,n-1}, Z_n) > n^{1 - 1/\beta -\delta} \mid |Z_{n}| > 2n\sqrt{\nu}\right).
        \end{align*}
        Once again, we will show that the right-hand side goes to $0$ as $n \to \infty$. Towards that end, observe that,
        \begin{align*}
            &n^{1/\beta - 1 + \delta} g(\tilde{Y}_{n,1}, \dots, \tilde{Y}_{n,n}, Z_n) \\
            &\quad \quad = n^{1/\beta-1+\delta}\sup_{|x - Z_n| \le 2|Z_n|/n} \left|\sum_{j=1}^{n-1}S(x, Y_{n,j}) - \frac{(n-1)x}{|x|^2}\right|\\
            &\quad \quad \le n^{1/\beta - 1 + \delta}\frac{n-1}{2|Z_n|}\sup_{|x - Z_n| \le 2|Z_n|/(n-1)} \frac{2|Z_n|}{n-1}\left|\sum_{j=1}^{n-1}S(x, Y_{n,j}) - \frac{(n-1)x}{|x|^2}\right|\\
            &\quad \quad \le n^{\delta}\left(\frac{n^{1/\beta}}{|Z_n|}\right)\left(\frac{n-1}{2n}\right)\mathbb{S}_{n-1}(Z_n),
        \end{align*}
        where $\mathbb{S}_{n-1}(Z_n)$ is as defined in \eqref{eq:boldS}. Due to Proposition \ref{propo:evt}, $n^{1/\beta}/|Z_n|$ converges to a non-degenerate random variable, say $G$. Also, the middle term converges to $1/2$.

        Consider $\mathbb{S}_{n-1}(Z_n)$. Choose $0 < \epsilon < 1/2 - \delta$. Such an $\epsilon$ always exists since $\delta < 1/2$. Let $T_0(\epsilon)$ be the $T_0(\epsilon)$ of Lemma \ref{lemma:score_bound}. For all $n \ge T_0(\epsilon)$, define the event $A_n = \{|Z_n| > 2n\sqrt{\nu}\}$. Then, on the event $A_n$, by Lemma \ref{lemma:score_bound},
        \begin{align*}
            n^{\delta}\mathbb{S}_{n-1}(Z_n) &\le 48\nu n^{2\epsilon + \delta-1} + \frac{|Z_n|^{-\beta}}{n^{\epsilon(d-1) - \delta}}L'(|Z_n|) + n^{\delta}g(|Z_n|) + n^{\delta}R_n(|Z_n|).
        \end{align*}
        where, $L', g,$ and $R_n$ are as in the statement of Lemma \ref{lemma:score_bound}. Remember that $Z_n$ scales as $n^{1/\beta}$ and $\delta < 1/2$. And so, the second and third terms on the right-hand side goes to $0$ as $n \to \infty$ for any choice of $\epsilon$ by the continuous mapping theorem. The first term goes to $0$ because of the choice of $\epsilon$. Also, from the statement of Lemma \ref{lemma:score_bound},  we have,
        \begin{align*}
            \mathbb{Q}\left(R_n(|Z_n|) > tn^{-\delta} \mid |Z_n| \right) \le 5^d\exp\left(-\frac{n^{1-2\epsilon - 2\delta}t^2}{512}\right) + C_d\frac{5^d|Z_n|^{-\beta}}{2^dn^{d\epsilon-1}}L(|Z_n|)
        \end{align*}
        where $L(t) = t^{\beta+d}h(t)$ converges to a finite constant as $n \to \infty$. Once again, since $|Z_n|$ scales as $n^{1/\beta}$, the right-most term goes to $0$ for any $\epsilon > 0$. Additionally, the exponential term on the right-hand side goes $0$ for all $t>0$ since $\epsilon + \delta < 1/2$.
        
        The above arguments imply that $n^{\delta}\mathbb{S}_{n-1}(Z_n)$ conditioned on the event $A_n$ goes to $0$ in probability. This implies that
        \[
            \mathbb{Q}\left(g(\tilde{Y}_{n,1}, \dots, \tilde{Y}_{n,n-1}, Z_n) > n^{1 - 1/\beta - \delta} \mid |Z_{n}| > 2n\sqrt{\nu}\right) \longrightarrow 0.
        \]
        As a consequence,
        \[
           \PP\left(\sup_{x \in \mathbb{R}^{d}}\left|S_n(x) - \hat{S}_n(x)\right| > n^{1-1/\beta-\delta} \mid A_n \right) \longrightarrow 0
        \]
        as $n \to \infty$.
    \end{proof}
    \section{Proof of Lemma~\ref{lemma:excursion_length}}
\label{sec:exit_technical}

    In this section, we will prove Lemma~\ref{lemma:excursion_length}. First, we will control the right excursions.

    \begin{lemma}
        Let $y \in [x_n^{+}, Y_{(n)})$ be arbitrary but fixed. Define $T_y^{\uparrow}$ to be the random variable such that for all $t > 0$
        \[
            \PP(T_y^{\uparrow} > t) = \exp\left(-\int_0^t \lambda_n(y + u, +1)\ du\right).
        \]
        Then, on the event $E_n$ for large $n$
        \begin{equation}
        \label{eq:right_excursion_1}
            \EE(T_y^{\uparrow}) \le \sqrt{\nu} + \frac{\sqrt{\nu\pi}}{2}\frac{\Gamma(\nu/2)}{\Gamma((\nu + 1)/2)} =: C_{\nu}.
        \end{equation}
        Suppose either $\gamma_n(x) \equiv 0$ or defined by \eqref{eq:excess_rate}. Let $0 < c < 2\sqrt{\nu}$ be a constant. Then, on the event $E_n$ for large $n$
        \begin{equation}
        \label{eq:right_excursion_lower}
            \EE(T_y^{\uparrow}) \ge ce^{-(\nu + 1)}\left(1-2c(\nu + 1)\overline{\gamma_n}\right).
        \end{equation}
    \end{lemma}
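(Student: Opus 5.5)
We will work throughout with the tail formula $\EE(T_y^{\uparrow}) = \int_0^\infty \exp\bigl(-\int_0^t \lambda_n(y+u,+1)\,du\bigr)\,dt$. The plan is to bound $\lambda_n(x,+1)$ from below, for the upper bound, and from above, for the lower bound, along the ray $x = y+u$, $u \ge 0$.

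\textbf{Upper bound.} In both the canonical and subsampling cases, $\lambda_n(x,+1) \ge (\nu+1)(S_n(x))_+$. For $x \ge x_n^{+}$ we know $S_n(x) \ge 0$. For $x \ge Y_{(n)}$, every summand $S(x,Y_j)$ is nonnegative, so $S_n(x) \ge S(x,Y_{(n)})$. Consequently, for $t \ge Y_{(n)} - y$,
\[
\int_0^t \lambda_n(y+u,+1)\,du \ge (\nu+1)\int_{Y_{(n)}}^{y+t} \frac{s-Y_{(n)}}{\nu+(s-Y_{(n)})^2}\,ds = \frac{\nu+1}{2}\log\Bigl(1+\frac{(y+t-Y_{(n)})^2}{\nu}\Bigr).
\]
For $t \le Y_{(n)} - y$ we only use the trivial bound, i.e. that the exponential is at most $1$. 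Since $y \ge x_n^{+}$ and $Y_{(n)} - x_n^{+} \le d_n^{+} < \sqrt{\nu}$ on $E_n$ by Theorem~\ref{theo:maxima}, this portion contributes at most $\sqrt{\nu}$. The remaining portion is
\[
\int_0^\infty (1+s^2/\nu)^{-(\nu+1)/2}\,ds = \frac{\sqrt{\nu\pi}}{2}\,\frac{\Gamma(\nu/2)}{\Gamma((\nu+1)/2)},
\]
which is the half Student-$t$ normalizing integral. Adding the two pieces gives $C_\nu$.

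\textbf{Lower bound.} We bound $\lambda_n(x,+1) = (\nu+1)(S_n(x))_+ + \gamma_n(x)$ from above for $x \in [y, y+c]$ and restrict the integral to $t \in [0,c]$, which gives
\[
\EE(T_y^{\uparrow}) \ge \int_0^c \exp\Bigl(-\int_0^t \lambda_n\Bigr)\,dt \ge c\,\exp\Bigl(-\int_0^c \lambda_n(y+u,+1)\,du\Bigr).
\]
We split $S_n = S(\cdot,Y_{(n)}) + \bigl(S_n - S(\cdot,Y_{(n)})\bigr)$. The first piece satisfies $|S(x,Y_{(n)})| \le 1/(2\sqrt{\nu})$, so its integral over a length-$c$ interval is at most $c/(2\sqrt{\nu}) < 1$ because $c < 2\sqrt{\nu}$; after multiplying by $(\nu+1)$ this produces the factor $e^{-(\nu+1)}$. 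For the remainder we follow \eqref{eq:gamma_bound}: on $E_n$ and for large $n$, both $S_n(x) - S(x,Y_{(n)})$ and $\gamma_n(x)/(\nu+1)$ are bounded by $(n-1)/x + n^{1-1/\beta-\delta} \le \overline{\gamma_n}$ (up to the factor adjustment made there). The bound is valid because $x \ge x_n^{+}$ is of order $n^{1/\beta}$, and the argument of \eqref{eq:gamma_bound} extends to $x \ge Y_{(n)}$ since there $\gamma_n = 0$ and the rest-of-data score is still at most $(n-1)/x$ plus the error term. The total extra contribution to the exponent is therefore at most $2c(\nu+1)\overline{\gamma_n}$. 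Finally, using $e^{-a} \ge 1-a$ yields $c\,e^{-(\nu+1)}\bigl(1-2c(\nu+1)\overline{\gamma_n}\bigr)$.

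\textbf{Main obstacle.} The delicate step is controlling $S_n - S(\cdot,Y_{(n)})$ uniformly on $[y, y+c]$, which may extend slightly beyond $Y_{(n)}$ and even beyond $\mathcal{F}_n$. Because $c < 2\sqrt{\nu} \ll 2|Y_{(n)}|/n$, the interval stays inside $\mathcal{F}_n$ for large $n$. Hence Theorem~\ref{theo:score_bound} (through the event $B_n$) applies there, and the bound used for \eqref{eq:gamma_bound} goes through with $(n-1)/x \le n^{1-1/\beta}/(2m_1)$.
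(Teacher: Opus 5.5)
Your proposal is correct and follows essentially the same route as the paper. The upper bound comes from $\lambda_n(\cdot,+1)\ge(\nu+1)(S(\cdot,Y_{(n)}))_+$ together with the half Student-$t$ integral. The lower bound comes from $\EE(T_y^{\uparrow})\ge c\,\PP(T_y^{\uparrow}>c)$, with the rate on $[y,y+c]\subset\mathcal{F}_n$ bounded by $(\nu+1)\bigl(1/(2\sqrt{\nu})+2\overline{\gamma_n}\bigr)$ via $B_n$ and \eqref{eq:gamma_bound}, followed by $e^{-a}\ge 1-a$.

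One small correction: for $\beta=1$ the inclusion $[y,y+c]\subset\mathcal{F}_n$ does not follow from $2\sqrt{\nu}\ll 2|Y_{(n)}|/n$, since both quantities are of order one there. It follows instead from $|Y_{(n)}|/n>m_2>c_\beta=2\sqrt{\nu}$ on $E_n$ for large $n$, which is how the paper obtains it.
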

    \begin{proof}
        From \eqref{eq:excess_cases}, it follows that for $x > x_n^{+}$,
        \[
            \lambda_n(x, +1) = (\nu + 1)S_n(x) + \gamma_n(x) \ge (\nu + 1)(S(x, Y_{(n)}))_{+}.
        \]
        Let $y \ge x_n^{+}$ be arbitrary but fixed. Then, for all $t> 0$
        \[
            \PP(T_y^{\uparrow} > t) \le \exp\left(-(\nu + 1)\int_0^t (S(y + u, Y_{(n)}))_{+}\ du\right).
        \]
        And so,
        \[
            \EE(T_y^{\uparrow}) = \int_{0}^{\infty} \PP(T_y^{\uparrow} > t)\ dt \le \int_0^{\infty} \exp\left(-\int_0^t \frac{(\nu + 1)(y + u - Y_{(n)})_{+}}{\nu + (y + u - Y_{(n)})^2}\ du\right) \ dt.
        \]
        Solving the integral on the right-hand side gives, for $y \in [x_n^{+}, Y_{(n)})$
        \[
            \EE(T_y^{\uparrow}) \le (Y_{(n)} - y) + \frac{\sqrt{\nu\pi}}{2}\frac{\Gamma(\nu/2)}{\Gamma((\nu + 1)/2)} \le \sqrt{\nu} + \frac{\sqrt{\nu\pi}}{2}\frac{\Gamma(\nu/2)}{\Gamma((\nu + 1)/2)}, 
        \]
        since $Y_{(n)} - y \le Y_{(n)} - x_n^{+} \le \sqrt{\nu}$.

        For the lower bound, let $0 < c < 2\sqrt{\nu}$ be a constant. Then, on the event $E_n$, $Y_{(n)} + c \le Y_{(n)} + Y_{(n)}/n$ and for $u \in [x_n^{+}, Y_{(n)} + c]$, $S_n(x) < \hat{S}_n(x) + n^{1-1/\beta-\delta}$. Since $\gamma_n \le (\nu + 1)\overline{\gamma_n}$, we get, for $u > x_n^{+}$
        \begin{align*}
            \lambda_n(u, +1) = (\nu + 1)S_n(u) + \gamma_n &\le (\nu + 1)\left(\hat{S}_n(u) + n^{1-1/\beta-\delta} + \overline{\gamma_n}\right) \\
            &\le (\nu + 1)\left(\frac{n-1}{x_n^{+}} + \frac{1}{2\sqrt{\nu}} + n^{1-1/\beta-\delta} + \overline{\gamma_n}\right).
        \end{align*}      
        This, together with $x_n^{+} > x_n^{-}$ and \eqref{eq:gamma_bound} gives 
        \[
            \lambda_n(u, + 1) \le (\nu + 1)\left(\frac{1}{2\sqrt{\nu}} + 2\overline{\gamma_n}\right).
        \]
        For arbitrary $y \in (x_n^{+}, Y_{(n)}]$, Markov's inequality implies,
        \begin{align*}
            \EE(T_{y}^{\uparrow}) \ge c \PP(T_y^{\uparrow} > c) &= c\exp\left(-\int_{y}^{y + c}\lambda_n(u,+1)\ du\right) \\
            &\ge c\exp\left(-c(\nu + 1)\left(\frac{1}{2\sqrt{\nu}} + 2\overline{\gamma_n}\right)\right)\\
            &\ge ce^{-(\nu + 1)}\left(1-2c(\nu + 1)\overline{\gamma_n}\right),
        \end{align*}
        since $c < 2\sqrt{\nu}$ and $cn^{1-1/\beta-\delta} < 1$ for large $n$, and $e^{-x} \ge 1 - x$. Hence, proved.
    \end{proof}

    \begin{lemma}
    \label{lemma:right_excursion_bounds}
        Suppose either $\gamma_n(x) \equiv 0$ or defined by \eqref{eq:excess_rate}. Let $y \in [x_n^{+}, Y_{(n)})$ be arbitrary but fixed and $0 < c < 2\sqrt{\nu}$ be a constant. Consider $T_n(y, +1)$ where $T_n(y, v)$ is defined as in \eqref{eq:defns}. Then, for large $n$, on the event $E_n$,
        \begin{align*}
            &2ce^{-(\nu + 1)}\left(1-2c(\nu + 1)\overline{\gamma_n}\right)  \le \EE(T_n(y, +1)) \le \frac{2\sqrt{\nu} + C_{\nu}} {\tilde{p}_n} + \sqrt{\nu},
        \end{align*}
        where $1 \ge \tilde{p}_n \ge \exp(- \sqrt{\nu}(\nu + 1)\overline{\gamma_n})$.
    \end{lemma}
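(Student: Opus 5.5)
Starting from $(y,+1)$ with $y\in[x_n^{+},Y_{(n)})$, I would decompose $T_n(y,+1)$ into an upward run followed by a descent back to $x_n^{+}$. The process first moves right until its velocity flips. This takes time $T_y^{\uparrow}$ and brings it to $y+T_y^{\uparrow}$. It then travels left with velocity $-1$, possibly flipping several times, until it reaches $(x_n^{+},-1)$. For the lower bound the return path must cover at least the upward displacement, so $T_n(y,+1)\ge 2T_y^{\uparrow}$ almost surely. Taking expectations and applying \eqref{eq:right_excursion_lower} from the preceding lemma gives the left-hand inequality directly.

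For the upper bound I would use a renewal argument. Call an attempt \emph{successful} if, after the first downward flip at $y'=y+T_y^{\uparrow}$, the process travels leftward from $y'$ to $x_n^{+}$ with no further switch. On $[x_n^{+},\infty)$ the leftward rate is $\lambda_n(x,-1)=(\nu+1)(-S_n(x))_{+}+\gamma_n(x)=\gamma_n(x)$, since $S_n\ge0$ there. By \eqref{eq:excess_cases}, $\gamma_n=0$ for $x\ge Y_{(n)}$. On $[x_n^{+},Y_{(n)})$ we have $\gamma_n\le(\nu+1)\overline{\gamma_n}$ (for canonical ZZP, $\gamma_n\equiv0$). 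That interval has length at most $\sqrt{\nu}$, so each downward passage survives with probability at least $\tilde p_n\ge\exp(-\sqrt{\nu}(\nu+1)\overline{\gamma_n})$. This matches the claimed bound on $\tilde p_n$.

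I would then bound the cost of a single attempt. An attempt started at some $(y'',+1)$ with $y''\ge x_n^{+}$ consists of an upward run, costing at most $C_\nu$ in expectation by \eqref{eq:right_excursion_1}, followed by the return travel. The key point is that any failure occurs before reaching $x_n^{+}$, so the process switches back to $+1$ at a point in $[x_n^{+},Y_{(n)})$ and a new attempt starts. Its starting point is again in the range where \eqref{eq:right_excursion_1} applies; this needs the bound $Y_{(n)}-y''\le\sqrt{\nu}$, which holds there. The descent portion of a failed attempt covers the upward run plus at most the extra distance from $Y_{(n)}$, i.e.\ at most $\sqrt{\nu}$ beyond the run length.

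Summing over attempts, the expected number of attempts is at most $1/\tilde p_n$, each costing at most $C_\nu+(C_\nu+\sqrt{\nu})$. I would then regroup the terms as $(2\sqrt{\nu}+C_\nu)/\tilde p_n+\sqrt{\nu}$, with the extra $\sqrt{\nu}$ accounting for the initial offset from $y$ down to $x_n^{+}$. To justify the summation I would use Wald's identity with a geometric number of attempts. Dependence between attempts through the random restart points is handled because the per-attempt bounds are uniform over $y''\in[x_n^{+},Y_{(n)})$.

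The main obstacle is the bookkeeping of the descent time when the starting point $y'$ lies beyond $Y_{(n)}$. There the descent distance equals the run length plus at most $\sqrt{\nu}$, so the expected run length has to be controlled twice. I would bound it via \eqref{eq:right_excursion_1} and absorb the constant into $C_\nu$. If the stated constants come out slightly off, the claimed form only requires $O(1)$ constants.
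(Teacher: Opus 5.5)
Your argument is sound. The lower bound ($T_n(y,+1)\ge 2T_y^{\uparrow}$ combined with \eqref{eq:right_excursion_lower}) is exactly the paper's.

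For the upper bound you use the same geometric-trials mechanism as the paper, organized differently. The paper first reduces to the top point. A descent from $(y+t,-1)$ either runs deterministically down to $Y_{(n)}$, since there is no leftward switching above $Y_{(n)}$, or is dominated by a descent from $(Y_{(n)},-1)$, which must pass through $(y+t,-1)$. This gives $\EE T_n(y,+1)\le 2C_\nu+\EE T_n(Y_{(n)},-1)$ uniformly in $y$. A one-step analysis from $(Y_{(n)},-1)$ then gives the self-referential inequality $\EE T_n(Y_{(n)},-1)\le a+(1-\tilde p_n)\bigl(a+2C_\nu+\EE T_n(Y_{(n)},-1)\bigr)$ with $a=Y_{(n)}-x_n^{+}$, which is solved for the expectation.

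You instead cut the path into attempts. You bound each attempt's conditional expected cost by $2C_\nu+\sqrt{\nu}$ and its conditional success probability from below by $\tilde p_n$, uniformly in the restart point, and sum with a Wald-type identity. The paper's version is shorter. Yours has the minor advantage of delivering finiteness of the expectation. Rearranging the paper's recursion tacitly assumes $\EE T_n(Y_{(n)},-1)<\infty$, and ergodicity only gives almost-sure finiteness.

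The one step that does not hold up is your final ``regrouping''. What your argument proves is $\EE T_n(y,+1)\le(2C_\nu+\sqrt{\nu})/\tilde p_n$. This cannot be rewritten as $(2\sqrt{\nu}+C_\nu)/\tilde p_n+\sqrt{\nu}$, for two reasons. First, the offset $y-x_n^{+}$ is already inside your per-attempt cost, so adding a separate $\sqrt{\nu}$ for it double counts. Second, $2C_\nu+\sqrt{\nu}>C_\nu+2\sqrt{\nu}$ because $C_\nu>\sqrt{\nu}$. Simply state the bound you actually obtain.

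This costs nothing. The paper's own inequality, solved correctly, gives $(2/\tilde p_n-1)a+2C_\nu/\tilde p_n$, not the displayed $C_\nu/\tilde p_n$. Moreover, the displayed constant cannot hold for every $\nu$. Take canonical ZZP with $\beta<1$:
\begin{itemize}
\item $\tilde p_n=1$, and $T_n(y,+1)=2T_y^{\uparrow}+(y-x_n^{+})$ exactly.
\item On $E_n$, $\EE T_y^{\uparrow}=\int_0^\infty \pi^{(n)}(y+t)/\pi^{(n)}(y)\,dt\to C_\nu-\sqrt{\nu}$, by dominated convergence with pointwise limit $(\nu/(\nu+t^2))^{(\nu+1)/2}$.
\item Hence $\EE T_n(y,+1)\to 2(C_\nu-\sqrt{\nu})$, which exceeds the displayed $C_\nu+3\sqrt{\nu}$ as soon as $C_\nu>5\sqrt{\nu}$, i.e.\ for small $\nu$.
\end{itemize}
Only the form ``constant$/\tilde p_n$ plus constant'' is used in Lemma~\ref{lemma:right_both}, and your bound supplies it.
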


    \begin{proof}
        Let $y \in [x_n^{+}, Y_{(n)})$ be arbitrary but fixed. We are interested in $\EE(T_n(y, +1))$. Starting at $(y, +1)$, let $T_y^{\uparrow}$ be the distance traveled by the process to the right of $y$ before the first jump. Then, it follows that,
        \[
            \EE(T_n(y, +1)) = \EE\left(T_y^{\uparrow} + \EE(T_n(y + T_y^{\uparrow}, -1) \mid T_y^{\uparrow})\right).
        \]
        Here, $T_y^{\uparrow}$ is distributed such that,
        \[
            \PP(T_y^{\uparrow} > t) = \exp\left(-\int_0^t \lambda_n(y + u, 1)\ du\right), \quad t \ge 0.
        \]
        Now, note that $S_n(x) > 0$ for all $x \ge Y_{(n)}$. Moreover, since $\gamma_n(x) \equiv 0$ for $x \ge Y_{(n)}$, we get $\lambda_n(x, -1) = 0$ for $x \ge Y_{(n)}$. This means that if $y + T_y^{\uparrow} > Y_{(n)}$, there will be no jump till the process drops back to $Y_{(n)}$. Thus, we get for any $t > 0$,
        \begin{align*}
            2t \le t + T_n(y + t, -1) \le t + (y + t - Y_{(n)})_{+} + T_n(Y_{(n)}, -1) \le 2t + T_n(Y_{(n)}, -1),
        \end{align*}
        since $y < Y_{(n)}$. And so,
        \begin{equation*}
            2 \EE(T_y^{\uparrow}) \le \EE\left(T_n(y, +1)\right) \le 2 \EE(T_y^{\uparrow}) + \EE(T_n(Y_{(n)}, -1)).
        \end{equation*}
        Using \eqref{eq:right_excursion_1} in the above gives,
        \begin{equation}
        \label{eq:right_excursion2}
            2 \EE(T_{y}^{\uparrow}) \le \EE(T_n(y, +1)) \le 2C_{\nu} + \EE(T_n(Y_{(n)}, -1)).
        \end{equation}
        The lower bound then follows from \eqref{eq:right_excursion_lower}. 
        
        To get the upper bound, we need to control $\EE(T_n(Y_{(n)}, -1))$. We will use upper bound in \eqref{eq:right_excursion2}, which is independent of $y$. Consider $T_n(Y_{(n)}, -1)$, i.e. starting from $(Y_{(n)}, -1)$, the first time to hit $(x_n^{+}, -1)$. Let $\tilde{p}_n$ denote the probability of reaching $(x_n^{+}, -1)$ from $(Y_{(n)}, -1)$ in one-step. Then, with probability $\tilde{p}_n$, $T_n(x_n^{+}, -1) = (Y_{(n)} - x_n^{+})$. On the other hand, with probability $(1 - \tilde{p}_n)$, the process travels a $m_n < (Y_{(n)} - x_n^{+})$ distance and changes direction. It then takes another $T_n(Y_{(n)} - m_n, +1)$ time to hit $(x_n^{+}, -1)$. But note that $Y_{(n)} - m_n \in (x_n^{+}, Y_{(n)})$. So from \eqref{eq:right_excursion2} we get
        \begin{align*}
            &\EE(T_n(Y_{(n)}, -1))\\
            &\quad = \tilde{p}_n\cdot(Y_{(n)} - x_n^{+}) + (1 - \tilde{p}_{n})  \EE\left(m_n + \EE(T_n(Y_{(n)} -m_n, +1)) \mid m_n < Y_{(n)} - x_n^{+}\right)\\
            &\quad \le  (Y_{(n)} - x_n^{+}) + (1 - \tilde{p}_n)\left((Y_{(n)} - x_n^{+}) + 2C_{\nu} + \EE(T_n(Y_{(n)}, -1))\right).
        \end{align*}
        This, together with \eqref{eq:right_excursion2} gives, 
        \begin{equation}
        \label{eq:right_excursion3}
            \EE(T_n(y, +1)) \le \left(\frac{2} {\tilde{p}_n} + 1 \right) (Y_{(n)} - x_n^{+}) + \frac{1}{\tilde{p}_n} C_{\nu} \le \frac{2\sqrt{\nu} + C_{\nu}} {\tilde{p}_n} + \sqrt{\nu},
        \end{equation}
        since $(Y_{(n)} - x_n^{+}) < \sqrt{\nu}$.
        
        Finally, consider $\tilde{p}_n$, the probability of hitting $(x_n^{+}, -1)$ from $(Y_{(n)}, -1)$ in one step. Since $S_n(x) > 0$ for $x > x_n^{+}$, for $u \in (x_n^{+}, Y_{(n)}]$, $\lambda_n(u, -1) = (\nu + 1)(-S_n(u))_{+} + \gamma_n(u) \le (\nu + 1)\overline{\gamma_n}$. Then,
        \begin{align*}
            \tilde{p}_n = \exp\left(-\int_{x_n^{+}}^{Y_{(n)}} \lambda_n(u, -1)\ du\right) \ge \exp\left(-(\nu + 1)(Y_{(n)} - x_n^{+})\overline{\gamma_n}\right) \ge 1 - \sqrt{\nu}(\nu + 1)\overline{\gamma_n},
        \end{align*}
        since $Y_{(n)} - x_n^{+} \le \sqrt{\nu}$. 
        Hence, proved.
    \end{proof}

    \begin{lemma}
    \label{lemma:right_both}
        Suppose either $\gamma_n(x) \equiv 0$ or defined by \eqref{eq:excess_rate}. For large $n$, on the event $E_n$, there exist constants $C_1$ and $C_2$ such that
        \[
            C_1 + O_{p}(n^{1-1/\beta}) \le \EE(T_{n}(x_n^{+}, +1)) \le C_2 + O_p(n^{1-1/\beta})
        \]
        as $n \to \infty$.
    \end{lemma}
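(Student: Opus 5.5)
The plan is to specialize Lemma~\ref{lemma:right_excursion_bounds} to $y = x_n^{+}$, which lies in $[x_n^{+}, Y_{(n)})$ because the discussion preceding \eqref{eq:excess_cases} shows that $x_n^{+} < Y_{(n)}$. On the event $E_n$, for $n$ large, this gives
\[
2ce^{-(\nu+1)}\bigl(1-2c(\nu+1)\overline{\gamma_n}\bigr) \le \EE(T_n(x_n^{+},+1)) \le \frac{2\sqrt{\nu}+C_\nu}{\tilde p_n} + \sqrt{\nu},
\]
where $0<c<2\sqrt{\nu}$ is fixed (say $c=\sqrt{\nu}$) and $\tilde p_n \ge \exp(-\sqrt{\nu}(\nu+1)\overline{\gamma_n})$. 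What remains is to convert each side into a constant plus a term of order $O_p(n^{1-1/\beta})$.

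For the lower bound, take $C_1 := 2ce^{-(\nu+1)}$. The remaining piece is $-4c^2e^{-(\nu+1)}(\nu+1)\overline{\gamma_n}$, where $\overline{\gamma_n} = n^{1-1/\beta}/(G_k+c_\beta)$. On $E_n \subseteq A$ we have $G_k + c_\beta > c_\beta \ge 0$, and $G_k$ is a fixed a.s.\ positive random variable (Gamma distributed), so $1/(G_k+c_\beta)$ is tight. Hence this piece is $O_p(n^{1-1/\beta})$.

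For the upper bound, use $1/\tilde p_n \le \exp(\sqrt{\nu}(\nu+1)\overline{\gamma_n})$. When $\beta<1$, $\overline{\gamma_n}\to 0$ in probability, so $e^{x} \le 1 + 2x$ for $x\le 1$ applies with probability tending to one. This gives $1/\tilde p_n \le 1 + 2\sqrt{\nu}(\nu+1)\overline{\gamma_n}$, and therefore $\EE(T_n(x_n^{+},+1)) \le (2\sqrt{\nu}+C_\nu+\sqrt{\nu}) + O_p(n^{1-1/\beta})$. We set $C_2 := 3\sqrt{\nu}+C_\nu$.

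The only delicate point is $\beta = 1$. There $\overline{\gamma_n} = 1/(G_k+2\sqrt{\nu})$ does not vanish, so $n^{1-1/\beta}=1$ and both error terms are $O_p(1)$. In this case the lower bound needs $C_1$ with a sign-free $O_p(1)$ correction, which is fine, and the upper bound needs $\exp(\sqrt{\nu}(\nu+1)/(G_k+2\sqrt{\nu})) \le \exp((\nu+1)/2)$. This is a deterministic constant, which can be absorbed by taking $C_2$ larger, or equivalently written as $C_2 + O_p(1)$.

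So the whole argument is bookkeeping: substitute $y=x_n^{+}$, linearize the exponential, and check tightness of $1/(G_k+c_\beta)$ conditionally on $E_n$. I expect the main care to be in stating the $O_p$ claims conditionally on $E_n$. Since $\PP(E_n)\to\PP(A)>0$ by Theorem~\ref{theo:maxima}, tightness of $1/(G_k+c_\beta)$ under $\PP$ transfers to tightness under $\PP(\cdot\mid E_n)$.
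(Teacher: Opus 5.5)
Your proposal is correct and follows essentially the same route as the paper. You specialize Lemma~\ref{lemma:right_excursion_bounds} to $y = x_n^{+}$, read the lower bound off directly, linearize $\exp(\sqrt{\nu}(\nu+1)\overline{\gamma_n})$ when $\beta<1$, and treat $\beta=1$ as trivial since $n^{1-1/\beta}=1$; your explicit $C_1, C_2$ and the transfer of tightness of $1/(G_k+c_\beta)$ to $\PP(\cdot\mid E_n)$ just fill in bookkeeping the paper leaves implicit.
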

    \begin{proof}
        Recall that $\overline{\gamma_n} = n^{1-1/\beta}/(G_k + c_{\beta})$. From Lemma~\ref{lemma:right_excursion_bounds} we get,
        \begin{align*}
            \EE(T_n(x_n^{+}, +1)) &\ge 2ce^{-(\nu + 1)}\left(1-2c(\nu + 1)\overline{\gamma_n}\right).
        \end{align*}
        And so the lower bound follows. For the upper bound, Lemma~\ref{lemma:right_excursion_bounds} gives,
        \begin{align*}
            \EE(T_n(x_n^{+}, +1)) \le \sqrt{\nu} + (2\sqrt{\nu} + C_{\nu})\exp(\sqrt{\nu}(\nu + 1)\overline{\gamma_n}).
        \end{align*}
        The result follows trivially for $\beta =1$. For $\beta < 1$, $\overline{\gamma_n} \to 0$ and the result follows from the upper bound $e^x < 1 + 2x$ for small $x$. 
    \end{proof}

    Lemma~\ref{lemma:right_both} proves that the right excursions remain $O_p(1)$ as $n \to \infty$ for both the canonical ZZP and subsampling ZZP. We now prove $O_{p}(1)$ upper bound on $\eta_n$ for canonical and subsampling ZZP separately. First, consider the canonical case.

    \begin{lemma}
    \label{lemma:eta_canonical}
        Suppose $\gamma_n \equiv 0$. For large $n$, on the event $E_n$, there exists a constant $C_3$ such that
        \[
            \EE(\eta_n) \le C_3 + O_p(n^{1-1/\beta})
        \]
        as $n \to \infty$.
    \end{lemma}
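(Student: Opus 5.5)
We bound $\EE(\eta_n)$ by decomposing the excursion started at $(x_n^{+},-1)$ into a leftward run followed, if needed, by a rightward return. For the canonical rate $\gamma_n\equiv 0$ we have $\lambda_n(x,-1)=(\nu+1)(-S_n(x))_+$ and $\lambda_n(x,+1)=(\nu+1)(S_n(x))_+$. On $(x_n^{-},x_n^{+})$, the definition of the width gives $S_n\le 0$. Hence moving left the process switches at rate $(\nu+1)|S_n(x)|$, and moving right it cannot switch at all, since $\lambda_n(x,+1)=0$ there. So after the first velocity flip at some point $x_n^{+}-L$ with $L<W_n$, the process travels deterministically back to $x_n^{+}$, reaching it with velocity $+1$. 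Therefore
\[
\eta_n = 2\min(L, W_n) \quad\text{up to the final segment,}\qquad \EE(\eta_n)\le 2\,\EE\bigl(\min(L,W_n)\bigr),
\]
where $L$ has survival function $\PP(L>t)=\exp\bigl(-(\nu+1)\int_{x_n^{+}-t}^{x_n^{+}}(-S_n(u))_+\,du\bigr)$.

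\textbf{Step 1 (lower bound on the switching rate).} We show $-S_n$ is bounded below near $x_n^{+}$. Let $s=x_n^{+}-u\ge 0$. On $E_n$, use $S_n\le \hat S_n + n^{1-1/\beta-\delta}$ and, inside $\mathcal F_n$, $\hat S_n(u)=(n-1)/u + S(u,Y_{(n)})$. This gives
\[
-S_n(u)\ \ge\ \frac{Y_{(n)}-u}{\nu+(Y_{(n)}-u)^2} - \frac{n-1}{u} - n^{1-1/\beta-\delta}.
\]
The last two terms are $O_p(n^{1-1/\beta})$ by the argument used for \eqref{eq:gamma_bound}; note $u\ge x_n^{-}$ stays within $\mathcal F_n$ by Theorem~\ref{theo:width}. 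Since $Y_{(n)}-x_n^{+}\le d_n^{+}\to 0$ (or is small for $\beta=1$) by Theorem~\ref{theo:maxima}, for $s\in[\sqrt\nu/2,\sqrt\nu]$ the first term is at least a constant $c_\nu>0$, e.g. $c_\nu=\min_{r\in[\sqrt\nu/4,2\sqrt\nu]} r/(\nu+r^2)$. Hence $-S_n\ge c_\nu/2$ on that interval for large $n$ when $\beta<1$.

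\textbf{Step 2 (tail of $L$).} From Step 1, $\PP(L>\sqrt\nu)\le \exp(-(\nu+1)c_\nu\sqrt\nu/4)=:q<1$. To iterate, we need a rate lower bound beyond $\sqrt\nu$. For $s\in[\sqrt\nu, W_n]$ the first term behaves like $1/s$, so $(\nu+1)\int_0^t(-S_n)\ge (\nu+1)\log(t/\sqrt\nu) - O(1) - t\cdot O_p(n^{1-1/\beta})$. This yields
\[
\PP(L>t)\le C\,(t/\sqrt\nu)^{-(\nu+1)}\, e^{t\,O_p(n^{1-1/\beta})},
\]
and integrating over $t\le W_n$ gives the estimate. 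Because $W_n\,O_p(n^{1-1/\beta})=O_p(1)$ by \eqref{eq:w_times_gamma}, the exponential factor is $O_p(1)$ uniformly on $[0,W_n]$.

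\textbf{Step 3 (the main obstacle: integrating the tail).} The integral $\int_{\sqrt\nu}^{W_n}(t/\sqrt\nu)^{-(\nu+1)}\,dt$ is bounded by a constant only when $\nu>0$. In that case we obtain $\EE\min(L,W_n)\le C_3'$ with $C_3'$ depending only on $\nu$, up to the $O_p(1)$ multiplicative factor.

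The delicate point is the additive form $C_3+O_p(n^{1-1/\beta})$ rather than $C_3\cdot O_p(1)$. To get it, we separate the drift correction: bound $e^{tx}\le 1+2tx$ for $tx$ small on the bulk $t\le n^{\theta}$, and handle $t\in[n^\theta,W_n]$ by the polynomial decay, which contributes $W_n\cdot n^{-\theta(\nu+1)}$. For $\beta=1$ everything is $O(1)$ anyway.

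Combining the two regimes gives $\EE(\eta_n)\le 2\sqrt\nu+2C_3'+O_p(n^{1-1/\beta})=:C_3+O_p(n^{1-1/\beta})$, which is the claimed bound on $E_n$.
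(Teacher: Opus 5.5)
Your argument is essentially the paper's. With $\gamma_n\equiv 0$ and $S_n\le 0$ on $(x_n^-,x_n^+)$, no switch occurs on the way back, so $\EE(\eta_n)\le 2\int_0^{W_n}\PP(L>t)\,dt$. Then $\PP(L>t)$ factors into two parts:
\begin{itemize}
\item the $Y_{(n)}$ term $\bigl((\nu+a^2)/(\nu+(a+t)^2)\bigr)^{(\nu+1)/2}$, with $a=Y_{(n)}-x_n^+$, which integrates to a constant depending only on $\nu$;
\item a factor from the other $n-1$ observations, which must be bounded uniformly on $[0,W_n]$.
\end{itemize}

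You differ only in how you bound that second factor. The paper writes it as an exact product and notes that each term increases in $t$ because $Y_{(j)}<x_n^-$. It bounds the product by its value at $t=W_n$ and controls that value through $p_n$ and $Y_{(n)}-x_n^-$, using Lemma~\ref{lem:pn_magnitude} and Theorem~\ref{theo:width}. You instead bound the bulk part of the switching rate pointwise on $B_n$ via $\hat S_n$ and multiply by $W_n$. Your route bypasses $p_n$ and mirrors how the paper handles the subsampling rate through $\overline{\gamma_n}$. The paper's route reuses a barrier estimate it needs anyway for Theorem~\ref{theo:arrhenius_law}. Your Step~1 is redundant, since Step~2 contains it.

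Step~3, however, does not deliver what you claim, and it is not needed. Write $\kappa_n:=(n-1)/x_n^-+n^{1-1/\beta-\delta}$ for your rate correction.
\begin{itemize}
\item The cross term from $e^{t\kappa_n}\le 1+2t\kappa_n$ is of order $\kappa_n\int_{\sqrt\nu}^{n^\theta}t^{-\nu}\,dt$. That is $\kappa_n n^{\theta(1-\nu)}$ for $\nu<1$ and $\kappa_n\log n$ for $\nu=1$.
\item The piece on $[n^\theta,W_n]$ is genuinely of order $n^{-\theta\nu}$, since the Student-$t$ factor is also a lower bound there.
\item With $\theta<1/\beta-1$, both are $O_p(n^{1-1/\beta})$ only when $\nu>1$.
\end{itemize}

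The clean fix is to observe that $W_n\kappa_n$ is bounded by a deterministic constant, not merely $O_p(1)$. The proof of Theorem~\ref{theo:width} shows $S_n(Y_{(n)}(1-2/n))>0$. Hence $x_n^-\ge Y_{(n)}(1-2/n)$, which is also what justifies $[x_n^-,x_n^+]\subset\overline{\mathcal F_n}$; the statement of that theorem alone does not give it. It follows that $W_n<2Y_{(n)}/n$, so
\[
W_n\kappa_n\le \frac{2(n-1)}{n-2}+2n^{-\delta}\,\frac{Y_{(n)}}{n^{1/\beta}}\longrightarrow 2 .
\]
Therefore $\EE(\eta_n)\le C(\nu)+o_p(1)$ for $\beta<1$ and $\beta=1$ alike, where
\[
C(\nu)=2e^{2(\nu+1)}(2\nu)^{(\nu+1)/2}\int_0^\infty(\nu+t^2)^{-(\nu+1)/2}\,dt .
\]
Since $C_3$ is yours to choose, any $C_3>C(\nu)$ makes the excess vanish with probability tending to one. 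That is trivially $O_p(n^{1-1/\beta})$, which gives the lemma as stated.
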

    \begin{proof}
        Let $T_n^{\downarrow}$ be the time to first jump for the process started at $(x_n^{-}, -1)$. If $T_n^{\downarrow} < W_n$, the process jumps before hitting $x_n^{-}$. For large $n$, on the event $E_n$, $S_n(x) < 0$ for $x \in (x_n^{-}, x_n^{+})$. When $\gamma \equiv 0$, there is no other jump before the process hits $(x_n^{+}, 1)$. So,
        \[
            \eta_n = \begin{cases}
                2T_n^{\downarrow}, & T_n^{\downarrow} < W_n, \\
                W_n, & T_n^{\downarrow} \ge W_n.
            \end{cases}
        \]
        which implies,
        \[
            \EE(\eta_n) = 2 \EE(\min\{T_n^{\downarrow}, W_n\}) - W_n \PP(T_n^{\downarrow} \ge W_n)  = 2\int_0^{W_n} \PP(T_n^{\downarrow} > t)\ dt - W_n p_{\tau, n}.
        \]
        However, from Lemma~\ref{lem:pn_magnitude} and Theorem~\ref{theo:width}, $W_n p_{\tau, n} = O_p(n^{(1-1/\beta)\nu})$. So we only need to focus on the first term.

        Consider,
        \begin{align*}
            \int_0^{W_n} \PP(T_n^{\downarrow} > t)\ dt  &= \int_0^{W_n} \exp\left(-\int_{0}^t \lambda_n(x_n^{+} - u, -1)\ du\right)\ dt\\ &= \int_0^{W_n} \exp\left((\nu + 1)\int_{x_n^{+} - t}^{x_n^{+}} S_n(u)\ du\right)\ dt
        \end{align*}
        since $\lambda_n(x,-1) = (\nu + 1)(-S_n(x))_{+}$ and $S_n(x) \le 0$ for $x \in (x_n^{-}, x_n^{+})$. But also,
        \begin{align*}
            &\exp\left((\nu + 1)\int^{x_n^{+}}_{x_n^{+} - t} S_n(u) du\right)\\
             &\quad \quad = \left(\frac{\nu + (x_n^{+} - Y_{(n)})^2}{\nu + (x_n^{+} - t - Y_{(n)})^2}\right)^{(\nu + 1)/2}\cdot \prod_{j=1}^{n-1}\left(\frac{\nu + (x_n^{+} - Y_{(j)})^2}{\nu + (x_n^{+} - t - Y_{(j)})^2}\right)^{(\nu + 1)/2}\\
             &\quad \quad \le \left(\frac{\nu + (x_n^{+} - Y_{(n)})^2}{\nu + (x_n^{+} - t - Y_{(n)})^2}\right)^{(\nu + 1)/2}\cdot \prod_{j=1}^{n-1}\left(\frac{\nu + (x_n^{+} - Y_{(j)})^2}{\nu + (x_n^{-} - Y_{(j)})^2}\right)^{(\nu + 1)/2}\\
             &\quad \quad = p_n\left(\frac{\nu + (x_n^{-} - Y_{(n)})^2}{\nu + (x_n^{+} - t - Y_{(n)})^2}\right)^{(\nu + 1)/2},
        \end{align*}
        where recall that $p_n = \pi^{(n)}(x_n^{-})/\pi^{(n)}(x_n^{+})$. We get,
        \begin{align*}
            \int_0^{W_n} \PP(T_n^{\downarrow} > t)\ dt &\le p_n\left(\nu + (x_n^{-} - Y_{(n)})^2\right)^{(\nu + 1)/2}K(\nu)\\ 
            &= K(\nu)\frac{p_n}{(Y_n - x_n^{-})^{-(\nu + 1)}}\left(1 + \frac{\nu}{(Y_n - x_n^{-})^{2}}\right)^{(\nu + 1)/2}
        \end{align*}
        for some constant $K(\nu)$. The result then follows from Theorem~\ref{theo:width} and Lemma~\ref{lem:pn_magnitude}.
    \end{proof}

    \begin{lemma}
    \label{lemma:eta_subsampling}
        Suppose $\gamma_n$ is as defined in \eqref{eq:excess_rate}. For large $n$, on the event $E_n$, there exists a constant $C_3$ such that
        \[
            \EE(\eta_n) \le C_3 + O_p(n^{1-1/\beta})
        \]
        as $n \to \infty$.
    \end{lemma}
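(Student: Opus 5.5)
Starting from $(x_n^{+},-1)$, the process can now switch direction more than once inside $(x_n^{-},x_n^{+})$, so the plan is to reduce to the canonical computation of Lemma~\ref{lemma:eta_canonical} and treat the extra jumps as a perturbation controlled by $\overline{\gamma_n}$. On $(x_n^{-},x_n^{+})$ we have $S_n\le 0$, so
\[
\lambda_n(x,-1)=(\nu+1)(-S_n(x))+\gamma_n(x),\qquad \lambda_n(x,+1)=\gamma_n(x).
\]
Moreover, for large $n$ on $E_n$, $0\le\gamma_n(x)\le(\nu+1)\overline{\gamma_n}$ by \eqref{eq:gamma_bound} and the estimate that follows it.

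First decompose $\eta_n$ into downward runs and upward runs. Each downward run started at some $y\in(x_n^{-},x_n^{+}]$ moving left has length stochastically dominated by the canonical downward run $T^{\downarrow}$ from $y$, because the subsampling rate is larger. Each upward run from $y$ ends at $x_n^{+}$ unless a switch occurs at rate $\gamma_n\le(\nu+1)\overline{\gamma_n}$. Over a distance at most $W_n$, such a switch occurs with probability at most
\[
q_n:=1-\exp\bigl(-(\nu+1)\overline{\gamma_n}W_n\bigr).
\]
By \eqref{eq:w_times_gamma} we have $\overline{\gamma_n}W_n\le 4+o(1)$, so $q_n\le 1-e^{-4(\nu+1)}-o(1)<1$ uniformly. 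Consequently the number of upward-run interruptions before reaching $x_n^{+}$ or $x_n^{-}$ is dominated by a geometric variable with mean $1/(1-q_n)=O_p(1)$.

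Next bound the expected length of one cycle, meaning a downward run followed by an upward run, uniformly over the starting point $y\in(x_n^{-},x_n^{+}]$. The upward piece is at most the distance back to $x_n^{+}$, which is bounded by the preceding downward distance plus any further downward excursions. The key estimate is therefore
\[
\sup_{y}\mathbb{E}\min\{T^{\downarrow}_y,\,y-x_n^{-}\}=O_p(1).
\]
For $y$ within $O(\sqrt\nu)$ of $Y_{(n)}$, this follows as in Lemma~\ref{lemma:eta_canonical}: the factor $(\nu+(x-Y_{(n)})^2)^{-(\nu+1)/2}$ gives an integrable profile with constant $K(\nu)$, and the remaining $n-1$ product terms are at most $1$ because $S_n-S(\cdot,Y_{(n)})>0$ there. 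For $y$ further left, a constant-order drift is not available, since $-S_n$ is only of order $n^{1-1/\beta}$ there. In that region I would instead bound the run length by the remaining distance to $x_n^{-}$, weighted by the probability that the process gets there. That probability is again expressed through $\exp((\nu+1)\int S_n)$, and by the argument of Lemma~\ref{lem:pn_magnitude} it is comparable to $\hat p$-type ratios, which are $O_p(n^{(1-1/\beta)(\nu+1)})$ times polynomial factors in $W_n$.

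Combining these estimates by Wald's identity over the geometric number of cycles yields $\mathbb{E}(\eta_n)\le C_3+O_p(n^{1-1/\beta})$. The $O_p(n^{1-1/\beta})$ term collects the contributions of order $\overline{\gamma_n}$ and $W_np_{\tau,n}$. The main obstacle is the previous step. After an upward-run interruption the process restarts deep in the well, away from $Y_{(n)}$, where the confining drift is weak. The rough bound then gives $O_p(W_n)$ rather than $O_p(1)$, and $W_n$ is of order $n^{1/\beta-1}$, which is far too large.

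To overcome this, I would use the fact that interruptions occur with probability $O(\overline{\gamma_n}\cdot\text{distance})$ while the process is moving upward. An interruption at distance $s$ below $x_n^{+}$ therefore costs an extra $O(s)$ in expectation. Integrating against the law of the downward overshoot, whose tail is $(\nu+s^2)^{-(\nu+1)/2}$ up to $p_n$-corrections, gives
\[
\overline{\gamma_n}\int_0^{W_n} s\,(1+s)^{-(\nu+1)}\,ds .
\]
This is $O_p(n^{1-1/\beta})$ when $\nu>1$. For $\nu\le 1$, I expect to need more care, for instance using $W_n\overline{\gamma_n}=O(1)$ together with the $p_n$ smallness from Lemma~\ref{lem:pn_magnitude}.
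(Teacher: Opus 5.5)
Your argument does not close. The step you call the main obstacle is left as a heuristic.

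\textbf{Where it fails.} The cycle-plus-Wald plan fails, as you note, because after an interruption at depth $s$ the next leftward run has mean of order $s$, which can be of order $W_n$. Your repair rests on the assertion that an interruption at depth $s$ ``costs an extra $O(s)$''. You never prove this, and it needs a recursion you do not supply. Every later upward run can be interrupted again, and the only bound you have for that probability, $1-e^{-(\nu+1)\overline{\gamma_n}\cdot(\text{run length})}$, is of order one for runs comparable to $W_n$, since $W_n\overline{\gamma_n}$ is bounded but not small by \eqref{eq:w_times_gamma}. Passing to the second moment of the overshoot is also what creates your $\nu\le 1$ worry. In fact, for $\beta<1$, $W_n\overline{\gamma_n}=O(1)$ already gives $\overline{\gamma_n}\int_0^{W_n}s(1+s)^{-(\nu+1)}\,ds=o(1)$ for every $\nu>0$.

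You also overlook that, by \eqref{eq:excess_cases}, the subsampling leftward rate on $(x_n^-,x_n^+)$ is exactly $\lambda_n(x,-1)=-(\nu+1)S(x,Y_{(n)})$. The reaching probabilities you propose to write via $\exp((\nu+1)\int S_n)$ and compare with $\hat p_n$ are governed by this single-datum rate. From deep starting points they are of order one, so that route cannot produce smallness.

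\textbf{The missing idea.} Do not control anything after an interruption. Use the crude bound $\eta_n\le 2W_nN$, with $N$ geometric and $\mathbb{E}N\le e^{(\nu+1)W_n\overline{\gamma_n}}=O(1)$, only on a bad event whose probability times $W_n$ is $O(1)$, i.e.\ probability $O(1/W_n+\overline{\gamma_n})$. The bound $W_n\overline{\gamma_n}=O(1)$ then absorbs the factor $W_n$.

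The paper does this with $B=\{W_1<\delta_n,\ W_1<F_2/((\nu+1)\overline{\gamma_n})\}$, where $W_1$ is the first leftward run and $\delta_n=W_n^{1/(1+\nu)}$. On $B$, $\eta_n=2W_1$. The exact tail $\mathbb{P}(W_1>t)=\bigl((\nu+a^2)/(\nu+(a+t)^2)\bigr)^{(\nu+1)/2}$, with $a=Y_{(n)}-x_n^+\le\sqrt\nu$, gives $\mathbb{E}[W_1\mathbf{1}(W_1<\delta_n)]=O(1)$ and $W_n\mathbb{P}(W_1>\delta_n)=O(1)$. Also $\mathbb{P}(W_1<\delta_n,\,B^c)\le(\nu+1)\overline{\gamma_n}\,\mathbb{E}[W_1\mathbf{1}(W_1<\delta_n)]$. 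Hence $\mathbb{E}\eta_n=O(1)$ for all $\nu>0$, using only the first moment of the overshoot.

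Your own ingredients suffice once you make this switch. Let $D$ be the first leftward run. The first return is interrupted with probability at most $(\nu+1)\overline{\gamma_n}\,\mathbb{E}\min(D,W_n)$, and after that $\eta_n\le 2W_n(1+N')$ with $N'$ an independent copy of $N$. Therefore
$\mathbb{E}\eta_n\le 2\,\mathbb{E}\min(D,W_n)\bigl(1+(\nu+1)W_n\overline{\gamma_n}(1+\mathbb{E}N)\bigr)$.
Finally, $\mathbb{E}\min(D,W_n)=O_p(1)$ follows either from the exact rate above, or from your stochastic domination by the canonical run together with the bound in the proof of Lemma~\ref{lemma:eta_canonical}.
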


    \begin{proof}
        We will follow the argument in \cite{monmarche_piecewise_2016}. Let $Z_0 = (X_0, V_0) = (x^{+}_n, -1)$ and $T_0 = 0$. Let $(F_k)_{k \ge 1}$ be an iid sequence of Exponential$(1)$ random variables. Suppose $Z_k = (X_k, V_k)$ and $T_k$ has been defined for some $k \ge 0$. Define $\tau_{k+1}$ by
        \[
            F_{k+1} = \int_{0}^{\tau_{k+1}} \lambda_n\left(X_k + V_ks, V_k\right)ds,
        \]
        where $\lambda_n(x, v) = (\nu + 1)(vS_n(x))_{+} + \gamma_n(x)$. Let $T_{k+1} = T_k + \tau_{k+1}$. Define $X_{k+1} = X_k + T_{k+1}\cdot V_k$ and $V_{k+1} = -V_k$. Repeat this procedure to get a skeleton $(Z_k, T_k)_{k\ge 0} = ((X_k, V_k), T_k)_{k \ge 0}$. Define a continuous time process $Z^n_t = (X^n_t, V^n_t)$ with initial condition $(X^n_0, V^n_0) = (x^{+}_n, -1)$ by,
        \[
            X^n_t = X_{k} + (t - T_k)\cdot V_k, \quad V^n_t = V_k; \quad t \in (T_{k}, T_{k+1}].
        \]
        Then, $Z^n_t$ is a Zig-Zag process with rate $\lambda_n(x, v)$. By construction, $V_{2k-1} = -V_{2k} = 1$ i.e. starting at $(x_n^{+}, -1)$, the process alternates between left moves and right moves. Moreover, times $\tau_{2k-1}$ and $W\tau_{2k}$ indicate the duration (and hence length) of these moves, respectively. 
        
        Consider $\eta_n$. Observe that $\eta_n$ is bounded above by twice the distance traveled to the left. Indeed, if $X^n_{\eta} = x_n^{+}$, the process spends as much time going left as going right, and if $X^n_{\eta} = x_n^{-}$, then it spends more time going left than right. Let $N_{\eta}$ be the number of left moves before time $\eta_n$. Then,
        \[
            \eta_n \le 2\sum_{k = 1}^{N_{\eta}} \tau_{2k-1} \quad \text{ where } \quad  N_{\eta} = \max\{k: T_{2k-1} < \eta_n\}.
        \]
        But for all $k \le N_{\eta}$, $\tau_{2k-1} \le W_n$. And so, $\eta_n \le 2W_nN_{\eta}$. 
        
        Now, for large $n$, on the event $E_n$, $S_n(x) \le 0$ for all $x \in (x_n^{-}, x_n^{+})$. Then, $\lambda_n(x,+1) = \gamma_n(x)$ for all $x \in (x_n^{-}, x_n^{+})$. 
        Define,
        \[
            J_{k} = \left\{F_{2k} < \int_{x_n^{-}}^{x_n^{+}}\gamma_n(u)\ du\right\}, \quad N= \min\{k\ge 1, J^c_{k}\}.
        \]
        Then, $N$ is a geometric random variable with \[
            \EE(N) = 1/\PP(J^c_1) \le e^{(\nu + 1)W_n\overline{\gamma_n}}.
        \]
        Since $\gamma_n \ge 0$, on the event $J^{c}_{k}$, we have that $\tau_{2k} > W_n$. Hence, if $T_{2k-1} < \eta_n$ and $J^c_k$ happens, then there will be no other jump before the process hits $(x_n^{+}, +1)$. And so, it must be that $N_{\eta} \le N$. Combining this with the earlier observation, we get,
        \[
            \eta_n \le 2W_nN.
        \]
        When $\beta = 1$, taking expectation on both sides of the above equation, we get,
        \[
            \EE(\eta_n) \le 2W_ne^{(\nu + 1)W_n\overline{\gamma_n}}.
        \]
        The result follows for from Theorem~\ref{theo:width} and equation \eqref{eq:w_times_gamma}.

        Now, suppose $0 < \beta < 1$. Then, from Theorem~\ref{theo:width}, $W_n \to \infty$ as $n \to \infty$. Let $n$ be large enough such that $W_n > 1$. Let $0 < \delta_n = W_n^{1/(1 + \nu)} < W_n$ on the event $E_n$. Let $B$ be the event $\{W_1 < \delta_n, W_1 < F_2/(\nu+1)\overline{\gamma_n}\}$. Then, on the event $B$, since $W_1 < \delta_n < W_n$, the process does not exit to the left in the first step. However, also on the event $B$, it follows that, $W_2 > W_1$. To see this, suppose $W_2 \le W_1$. Then, since $\lambda_n(x,+1) = \gamma_n(x) \le (\nu+1)\overline{\gamma_n}$ for $x \in (x_n^{-}, x_n^{+})$, we get that,
        \begin{align*}
             F_2 &= \int_{0}^{W_2} \lambda_n(x_n^{+} - W_1 + u, +1)\ du \le \int_{0}^{W_1} \gamma_n(x_n^{+} - W_1 + u)\ du  \le (\nu+1)W_1\overline{\gamma_n},
        \end{align*}
        which is a contradiction. So, $W_2 > W_1$. Thus, on the event $B$, the process exits on the right in the first step and hence, $\eta_n = 2W_1$ on the event $B$. We have,
        \begin{align*}
            \EE(\eta_n) = \EE(1_{B}\eta_n) + \EE(1_{B^c}\eta_n) \le 2\EE(1_{B}W_1) + 2W_n\EE(1_{B^c}N).
        \end{align*}
        But also, on the event $B^c$, if $F_2 < (\nu+1)\delta_n{\overline{\gamma_n}}$ then, $N > 1$. We thus get,
        \begin{align*}
            \EE(\eta_n) &\le 2\EE(1_{B}W_1) + 2W_n\PP(B^c)(1 + \EE(N)) \\
                &\le 2\EE(1_{B}W_1) + 2W_n\PP(B^c)\left(1 + e^{(\nu+1)W_n\overline{\gamma_n}}\right).            
        \end{align*}
        
        First consider,
        \begin{align*}
            \PP(B) = \PP(W_1 < \delta_n, W_1 < F_2/(\nu+1)\overline{\gamma_n}) &= \EE\left(e^{-(\nu+1)W_1\overline{\gamma_n}}1(W_1 < \delta_n)\right)
            \\
            &\ge \EE\left((1 - (\nu+1)W_1\overline{\gamma_n})1(W_1 < \delta_n)\right)\\
            &= \PP(W_1 < \delta_n) - (\nu+1)\overline{\gamma_n}\EE(W_11(W_1 < \delta_n)). 
        \end{align*}
        Also, note that $\EE(1_{B}W_1) \le \EE(W_11(W_1 < \delta_n))$. And so, we get, after rearranging
        \begin{align*}
            \EE(\eta_n) 
            &= 2W_n \PP(W_1 > \delta_n)(1 + e^{(\nu +1)W_n\overline{\gamma_n}}) \\
            &\quad \quad + 2\left(1 +  (\nu +1)W_n\overline{\gamma_n}(1 + e^{(\nu +1)W_n\overline{\gamma_n}})\right)\EE(W_11(W_1 < \delta_n))
        \end{align*}
        From \eqref{eq:w_times_gamma}, on the set $E_n$, for large $n$, 
        \begin{align*}
            W_n\overline{\gamma_n} \le 4 + n^{1-1/\beta}\sqrt{\nu}/G_k,
        \end{align*}
        since $c_{\beta} = 0$ for $\beta < 1$. The right-hand side goes to $4$ as $n \to \infty$. Hence, for large $n$, $W_n\overline{\gamma_n} \le 5$ on the event $E_n$. We get,
        \begin{align*}
            \EE(\eta_n)
            &\le 2W_n \PP(W_1 > \delta_n)(1 + e^{5(\nu +1)}) + 2(1 + 5(\nu + 1)(1 + e^{5(\nu +1)}))\EE(W_11(W_1 < \delta_n))
        \end{align*}
        
        Now, observe that for large $n$, on the event $E_n$, we have $\lambda_n(x, -1) = -(\nu + 1)S(x, Y_{(n)})$ for all $x\in(x_n^{-}, x_n^{+})$. Let $t \le W_n$. Then,
        \begin{align*}
            \PP(W_1 > t) = \PP\left(F_1 > \int_{0}^t\lambda_n(x_n^{+} - u, -1)\ du \right) = \left(\frac{\nu + (Y_{(n)} - x_n^{+})^2}{\nu + (Y_{(n)} - x_n^{+} + t)^2}\right)^{(\nu + 1)/2}
        \end{align*}
        Then, recalling that $\delta_n = W_n^{1/(1 + \nu)}$, we get,
        \[
            W_n \PP\left(W_1 > W_n^{1/(1+\nu)}\right) \le W_n\left(\frac{\nu + (Y_{(n)} - x_n^{+})^2}{W_n^{2/(1 + \nu)}}\right)^{(\nu + 1)/2} = \left(\nu + (Y_{(n)} - x_n^{+})^2\right)^{(\nu + 1)/2}.
        \]
        Moreover, 
        \begin{align*}
            \EE(W_11(W_1 < \delta_n)) &\le \int_{0}^{\delta_n} \PP(W_1 > t)dt  \\
            &\le (\nu + (Y_{(n)} - x_n^{+})^2)^{(\nu + 1)/2}\int_0^{\infty}(\nu + t^2)^{-(\nu + 1)/2}\ dt\\
            &\le K(\nu)(\nu + (Y_{(n)} - x_n^{+})^2)^{(\nu + 1)/2},
        \end{align*}
        for some constant $K(\nu)$. Combining these observations and recalling that $|Y_{(n)} - x_n^{+}| \le \sqrt{\nu}$ gives,
        \[
            \EE(\eta_n) \le (2K(\nu) + (2 + 10(\nu + 1)K(\nu))(1 + e^{(\nu +1)5}))\left(2\nu\right)^{(\nu + 1)/2}.        
        \]
        The result follows.        
    \end{proof}

    \begin{proof}[Proof of Lemma~\ref{lemma:excursion_length}]
        The result follows by combining Lemma~\ref{lemma:right_both}, Lemma~\ref{lemma:eta_canonical}, and Lemma~\ref{lemma:eta_subsampling}.
    \end{proof}

\end{appendix}

%%%%%%%%%%%%%%%%%%%%%%%%%%%%%%%%%%%%%%%%%%%%%%
%% Support information, if any,             %%
%% should be provided in the                %%
%% Acknowledgements section.                %%
%%%%%%%%%%%%%%%%%%%%%%%%%%%%%%%%%%%%%%%%%%%%%%
\begin{acks}[Acknowledgments]
The authors would like to thank Alberto Bordino for helpful discussions on uniform law of large numbers.
\end{acks}
%%%%%%%%%%%%%%%%%%%%%%%%%%%%%%%%%%%%%%%%%%%%%%
%% Funding information, if any,             %%
%% should be provided in the                %%
%% funding section.                         %%
%%%%%%%%%%%%%%%%%%%%%%%%%%%%%%%%%%%%%%%%%%%%%%
\begin{funding}
SA was supported by the EPSRC studentship grant EP/W523793/1, and by the Independent Research Fund Denmark (DFF) through the Sapere Aude Starting Grant (5251-00032B). SG acknowledges support from the European Union (ERC), through the Starting Grant `PrSc-HDBayLe', project number 101076564. GOR was supported by PINCODE (EP/X028119/1), EP/V009478/1 and by the UKRI grant, OCEAN, EP/Y014650/1.
\end{funding}

%%%%%%%%%%%%%%%%%%%%%%%%%%%%%%%%%%%%%%%%%%%%%%
%% Supplementary Material, including data   %%
%% sets and code, should be provided in     %%
%% {supplement} environment with title      %%
%% and short description. It cannot be      %%
%% available exclusively as external link.  %%
%% All Supplementary Material must be       %%
%% available to the reader on Project       %%
%% Euclid with the published article.       %%
%%%%%%%%%%%%%%%%%%%%%%%%%%%%%%%%%%%%%%%%%%%%%%
%\begin{supplement}
%\stitle{???}
%\sdescription{???.}
%\end{supplement}

%%%%%%%%%%%%%%%%%%%%%%%%%%%%%%%%%%%%%%%%%%%%%%%%%%%%%%%%%%%%%
%%                  The Bibliography                       %%
%%                                                         %%
%%  imsart-???.bst  will be used to                        %%
%%  create a .BBL file for submission.                     %%
%%                                                         %%
%%  Note that the displayed Bibliography will not          %%
%%  necessarily be rendered by Latex exactly as specified  %%
%%  in the online Instructions for Authors.                %%
%%                                                         %%
%%  MR numbers will be added by VTeX.                      %%
%%                                                         %%
%%  Use \cite{...} to cite references in text.             %%
%%                                                         %%
%%%%%%%%%%%%%%%%%%%%%%%%%%%%%%%%%%%%%%%%%%%%%%%%%%%%%%%%%%%%%

% if your bibliography is in bibtex format, uncomment commands:
\bibliographystyle{imsart-number} % Style BST file (imsart-number.bst or imsart-nameyear.bst)
\bibliography{references}       % Bibliography file (usually '*.bib')

%% or include bibliography directly:
% \begin{thebibliography}{}
% \bibitem{b1}
% \end{thebibliography}

\end{document}